\definecolor{red}{rgb}{1,0,0}
\def\u{{\bm u}}
\def\ep{{\bm \epsilon}}
\def\sig{{\bm \sigma}}
\def\bU{{\boldsymbol U}}
\def\bV{{\boldsymbol V}}
\def\bu{{\boldsymbol u}}
\def\bv{{\boldsymbol v}}
\def\C{\Lambda}
\renewcommand{\div}{\operatorname{div}}
\newcommand{\Lbrack}{\lbrack\!\lbrack} 
\newcommand{\Rbrack}{\rbrack\!\rbrack} 
\newcommand{\vek}[1]{\boldsymbol{#1}}
\newtheorem{lemma}{Lemma}
\newtheorem{theorem}{Theorem}
\newtheorem{corollary}[theorem]{Corollary}
\newtheorem{proposition}[theorem]{Proposition}
\newtheorem{remark}[theorem]{Remark}
\def\Xi{{X}_h}
\def\ep{\bm \epsilon}
\def\sig{\bm \sigma}
\def\bU{\boldsymbol U}
\def\bV{\boldsymbol V}
\def\bP{\boldsymbol P}
\def\bf{\boldsymbol f}
\def\bu{\boldsymbol u}
\def\bv{\boldsymbol v}
\def\bp{\boldsymbol p}
\def\bw{\boldsymbol w}
\def\bo{\boldsymbol 0}
\def\bx{\boldsymbol x}
\def\bz{\boldsymbol z}
\def\bq{\boldsymbol q}
\def\bn{\boldsymbol n}
\def\divv{\text{div }}
\def\divn{\text{div }}
\def\Divv{\text{Div }}
\renewcommand{\div}{\operatorname{div}}
\title{Conservative discretizations and parameter-robust preconditioners for
Biot and multiple-network
flux-based
poroelastic models}
\author{Qinggou Hong, Johannes Kraus, Maria Lymbery, Fadi Philo}
\begin{document}
		
		\maketitle
		

	
\begin{abstract}
The parameters in the governing system of partial differential equations of multicompartmental poroelastic models typically
vary over several orders of magnitude making its stable discretization and efficient solution a challenging task. In this paper,
inspired  by the approach recently presented by Hong and Kraus~[Parameter-robust stability of classical three-field formulation
of Biot's consolidation model, ETNA (to appear)] for the Biot model, we prove the uniform stability, and design stable
disretizations and parameter-robust preconditioners for flux-based formulations of multiple-network poroelastic systems. 
Novel parameter-matrix-dependent norms that provide the key for establishing uniform inf-sup stability of the continuous
problem are introduced. As a result, the stability estimates presented here are uniform not only with
respect to the Lam\'e parameter $\lambda$, but also with respect to all the other model parameters such as permeability
coefficients $K_i$, storage coefficients $c_{p_i}$, network transfer coefficients $\beta_{ij}, i,j=1,\cdots,n$, the scale of the
networks $n$ and the time step size $\tau$.

Moreover, strongly mass conservative discretizations that meet the required conditions for parameter-robust stability are suggested
and corresponding optimal error estimates proved. The transfer of the canonical (norm-equivalent) operator preconditioners from
the continuous to the discrete level lays the foundation for optimal and fully robust iterative solution methods. The theoretical
results are confirmed in numerical experiments that are motivated by practical applications.
\end{abstract}

\textbf {Keywords:} 
Multiple-network poroelastic theory (MPET), flux-based
formulation, parameter-robust stability, strongly mass conservative discretization, robust norm-equivalent preconditioners

	
\section{Introduction}	

Multiple-network poroelastic theory (MPET) has been introduced into geomechanics~\cite{Bai_etal1993multi} to describe mechanical
deformation and fluid flow in porous media as a generalization of Biot's theory~\cite{Biot1941general,Biot1955theory}.
The deformable elastic matrix is assumed to be permeated by multiple fluid networks of pores and fissures with differing porosity and
permeability.

During the last decade, MPET has acquired many important applications in medicine and biomechanics and therefore become an active
area of scientific research. The biological MPET model captures flow across scales and networks in soft tissue and can be used as an
embedding platform for more specific models, e.g. to describe water transport in the cerebral environment and to explore hypotheses
defining the initiation and progression of both acute and chronic hydrocephalus~\cite{TullyVentikos2011cerebral}.
In~\cite{Vardakis2013exploring,Vardakis2016investigating} multicompartmental poroelastic models have been proposed to study the
effects of obstructing cerebrospinal fluid (CSF) transport within an anatomically accurate cerebral environment and to demonstrate
the impact of aqueductal stenosis and fourth ventricle outlet obstruction (FVOO). As a consequence, the efficacy of treating such clinical
conditions by surgical procedures that focus on relieving the buildup of CSF pressure in the brain's third or fourth ventricle could be explored
by means of computer simulations, which can also assist in finding medical indications of oedema formation~\cite{Chou2016afully}.

Recently, the MPET model has also been used to better understand the influence of biomechanical risk factors associated
with the early stages of Alzheimer's disease (AD), the most common form of dementia~\cite{Guo_etal2018subject-specific}.
Modeling transport of fluid within the brain is essential in order to discover the underlying mechanisms that are currently being investigated
with regard to AD, such as the amyloid hypothesis according to which the accumulation of neurotoxic amyloid-$\beta$ (A$\beta$) into
parenchymal senile plaques or within the walls of arteries is a root cause of this disease. 

Biot's and multiple-network poroelastic models are challenging from a computational point of view in that the physical parameters for
different practical applications exhibit extremely large variations. For instance, permeabilities in geophysical applications typically
range from $10^{-9}$ to $10^{-21} m^2$ while Young's modulus is of the order of GPa and the Poisson ratio in the range~$0.1-0.3$,
see~\cite{Wang2000theory,Lee2016parameter,Coussy2004poromechanics}.
Permeabilities in biological applications typically range from $10^{-14}$ to $10^{-16} m^2$. Young's modulus of soft tissues is in the
order of kPa and the Poisson ratio in the range $0.3$ to almost $0.5$, see, e.g.,~\cite{Smith2007interstitial,Stoverud2016poroelastic}.
For that reason it is important that the problem is well posed and the numerical methods for its solution are stable over the whole range
of values of the physical (model) and discretization parameters.

The stability of the time discretization and space discretization by finite difference or finite volume methods have been 
studied in~\cite{Axelsson2012stable, gaspar2003finite, gaspar2006staggered, nordbotten2016stable} and will not be
addressed here. Instead we focus on the issue of uniform inf-sup stable finite element discretizations of the static multiple-network poroelastic
problem. 
It is well known that the well-posedness analysis of saddle-point problems in their weak formulation, apart from the
boundedness and definiteness of the underlying bilinear form, relies on a stability estimate that is often referred
to as Ladyzenskaja-Babuska-Brezzi (LBB) condition~\cite{Boffi2013mixed,Ern2004theory}.
The LBB condition, see~\cite{Babuska1971error,Brezzi1974existence}, is also crucial in the analysis of stable
discretizations and in the derivation of a priori error etsimates for mixed problems. Inf-sup stability for the Darcy
problem as well as for the Stokes and linear elasticity problems have been established under rather general conditions
and various stable mixed discretizations of either of these problems have been proposed over the years,
see, e.g.~\cite{Boffi2013mixed} and the references therein.

Biot's model of poroelasticity combines these equations and the parameter-robust stability of its classical three-field formulation
has been established only recently in~\cite{HongKraus2017parameter}. Alternative formulations that can be proven to
be stable include a two-field formulation for the displacement
and the pore pressure~{\cite{boffi2016nonconforming, adler2017robust}} and a new three-field formulation based on introducing
the total pressure as a weighted sum of fluid and solid pressure as the third unknown besides the displacement and fluid
pressure~\cite{oyarzua2016locking, Lee2016parameter}.
Contrary to this new three-field formulation as analyzed in~\cite{Lee2016parameter}, the classic three-field
formulation of Biot's consolidation model considered in~\cite{HongKraus2017parameter} builds on Darcy's law
in order to guarantee fluid mass conservation, a property that the discrete models studied in this paper maintain.  
Aside from two- and three-field formulations, a four-field formulation has been considered for the Biot model in which
the stress tensor is kept as a variable in the system, see~\cite{Lee2016robust}. The error analysis in the latter work is
robust with respect to the Lam\'e parameter $\lambda$, but not uniform with respect to the other model parameters
such as $K$.
Another formulation for Biot's model has recently been proposed and analyzed in~\cite{Baerland2017weakly}. The authors
use  mixed methods based on the Hellinger-Reissner variational principle for the elasticity part of the system, and impose
weakly the symmetry of the stress tensor $\sig$, resulting in a saddle point problem for $\sig$, $\u$, $p$, and a Lagrange
multiplier. They prove the parameter-robust stability of the resulting four-field formulation.

The first attempt to design parameter-robust discretizations and analyze their stability for the MPET model
is presented in~\cite{lee2018mixed}. Motivated by~\cite{oyarzua2016locking, Lee2016parameter}, the
authors of~\cite{lee2018mixed} propose a mixed finite element formulation based on introducing an additional total
pressure variable. Utilizing energy estimates for the solutions of the continuous problem and a priori error
estimates for a family of compatible semi-discretizations, they show that the formulation is robust in the limits of
incompressibility, vanishing storage coefficients, and vanishing transfer between networks. 
The robustness with respect to the permeability coefficients remains an open question in~\cite{lee2018mixed}.

There are various discretizations for the {classic three-field} formulation of Biot's model that meet the conditions for the proof
of full parameter-robust stability that has been presented in~\cite{HongKraus2017parameter}. In general, whenever
a discretization is based on a Stokes-stable pair of finite element spaces for the displacement and pressure and a
Poisson-stable pair of finite element spaces for the flux and pressure unknowns, it is possible to define a
parameter-dependent norm (which in general is not uniquely determined) such that the constant in the inf-sup
condition for the Biot problem does not depend on any of the model or disretization parameters.
For example, the triplets $CR_{l}/RT_{l-1}/P_{l-1} (l=1,2)$ together with the stabilization techniques suggested
in~\cite{hansbo2003discontinuous,Hu2017nonconforming}, see also~\cite{fortin1983non}, or the triplets
$P_2/RT_0/P_0$ (in 2D) and $P_2^{stab}/RT_0/P_0$ (in 3D), or $P_2/RT_1/P_1$, or the stabilized discretization
that has recently been advocated in~\cite{rodrigo2017new}, or the finite element methods proposed in~\cite{lee2018robust}
would qualify for such parameter-robustness. However, the above-mentioned finite element methods do not have the property of 
strong mass conservation in the sense of satisfying the mass balance equation pointwise and therefore locally and globally on a discrete level.

A priori error estimates for the
continuous-in-time scheme and the discontinuous Galerkin (DG) spatial discretization (similar to \cite{HongKraus2017parameter})
have been presented in~\cite{KanschatRiviere2017finite} for the Biot model.
Inspired by the approach proposed in~\cite{HongKraus2017parameter} in context of the static Biot problem, we make use of the
DG technology in the present  work for solving the MPET system by introducing novel parameter-matrix-dependent norms.

The aim of this work is to establish the results regarding the parameter-robust
stability of the weak formulation of the continuous problem as well as the stability of strongly mass conservative
discretizations, corresponding error estimates and parameter-robust preconditioners for the multiple-network (MPET) model. 
The presented stability results and error estimates and preconditioners are independent of {all model and discretization
parameters} including the Lam\'e parameter~$\lambda$, permeability coefficients $K_i$, arbitrary small or even vanishing
storage coefficients $c_{p_i}$, network transfer coefficients $\beta_{ij}, i,j=1,\cdots,n$, the scale of the networks $n$, the time
step size $\tau$ and mesh size $h$. To our knowledge, these are the first fully parameter-robust stability results for the
MPET model in a flux-based formulation.

The paper is organized as follows. In Section~\ref{sec:ModelProblem} the multiple-network poroelastic model is stated
in a flux-based formulation, which can be considered as an extension of the classical three-field formulation considered
in~\cite{HongKraus2017parameter}. The governing partial differential equations are then rescaled and the static
boundary-value problem resulting from semi-discretization in time by the implicit Euler method is presented in its
weak formulation in the beginning of Section~\ref{sec:par_rob_stab_model}. The proofs of the uniform boundedness and
the parameter-robust inf-sup stability of the underlying bilinear form are the main results that follow in this section.
Section~\ref{sec:uni_stab_disc_model} then discusses a class of uniformly stable and strongly mass conservative
mixed finite element discretizations that are based on $H(\div)$-conforming discontinuous Galerkin approximations of
the displacement field. Uniform boundedness and inf-sup stability are proved to be independent of all model and
discretization parameters and the corresponding parameter-robust preconditioners are provided. 
Next, in Section~\ref{sec:error_estimates}, optimal parameter-robust error estimates are proved.
Finally, Section~\ref{sec:NumericalExperiments} is devoted to the validation and illustration of the theoretical results
in this work and Section~\ref{conclusion} provides a brief conclusion.

\section{Model Problem}\label{sec:ModelProblem}

In an open domain
$\Omega \subset \mathbb R^d$, $d=2,3$, the unknown physical variables in the MPET flux based model are the displacement
$\bu$, fluxes $\bv_i$ and corresponding pressures $p_i$ $i=1,\dots,n$. The equations describing the model are as follows:
\begin{subequations}
\begin{align}
-\divv \sig + \sum_{i=1}^{n}\alpha_i \nabla p_i  &= \bf~~ \text{in}~~ \Omega\times (0,T),\label{MPET1}\\
\bv_i &= -K_i \nabla p_i\;\; \text{in}~~ \Omega\times (0,T),~~ i=1,\ldots,n,  \label{MPET2} \\
-\alpha_i \divv \dot{\bu} - \divv \bv_i - c_{p_{i}} \dot{p}_{i} - \sum_{\substack{j=1\\j\neq i}}^{n}\beta_{ij} (p_i-p_j) &=g_i\;\;\text{in}~~ \Omega\times (0,T),~~  i=1,\ldots,n,\label{MPET3}
		\end{align}
		\label{eq:MPET}
\end{subequations}
where
\begin{subequations}
	\begin{align}
	\sig &= 2\mu \ep(\bu) + \lambda \text{div}(\bu)\bm I,\label{constitutive}\\
	\ep(\bu) &= \frac{1}{2}(\nabla \bu + (\nabla \bu)^T). \label{compatibility}	
	\end{align}
\end{subequations}

In equation~\eqref{constitutive}, $\lambda$ and $\mu$ denote the Lam\'e parameters defined in terms of the modulus of
elasticity (Young's modulus) $E$ and the Poisson ratio $\nu\in[0,1/2)$ by
$$
\lambda:=\frac{\nu E}{(1+\nu)(1-2\nu)}, \quad
\mu:=\frac{E}{2(1+\nu)}.
$$

The constants $\alpha_i$ appearing in~\eqref{MPET1} couple $n$ pore pressures $p_i$ with the displacement variable $\bu$ 
and are known in the literature as Biot-Willis parameters. The corresponding right hand side $\bf$ describes the body force density. 
Each fluid flux $\bv_i$ is related to a specific negative pressure gradient $-\nabla p_i$ via Darcy's law in~\eqref{MPET2}. 
The tensors $K_i$ denote the hydraulic conductivities
which give an indication of the general permeability of a porous medium. 
In~\eqref{MPET3} $\dot{\bu}$ and $\dot p_i$ express the time derivatives of  the displacement $\bu$ and the pressure variables $p_i$. 
The constants $c_{p_{i}}$ are referred to as the constrained specific storage coefficients and are connnected to compressibility
of each fluid, for more see e.g.~\cite{Showalter2010poroelastic} and the references therein. 
The parameters $\beta_{ij}$ are the network transfer coefficients coupling the network {pressures \cite{TullyVentikos2011cerebral}},
hence $\beta_{ij}=\beta_{ji}$. The source terms $g_i$ in~\eqref{MPET3} represent forced fluid extractions or injections into the medium.

It is assumed that the effective stress tensor $\sig$ satisfies Hooke's law~\eqref{constitutive} where the effective strain tensor
$\ep(\bu)$ is given by the symmetric part of the gradient of the displacement field, see~\eqref{compatibility}. 
Here $\boldsymbol I$ is used to denote the identity tensor. 

The following boundary and initial conditions guarantee the well posedness of system~\eqref{eq:MPET}: 
\begin{subequations}\label{eq:MPET_BC}
\begin{eqnarray}
p_i(\bx,t) &=& p_{i,D}(\bx,t)  \qquad \mbox{for } \bx \in \Gamma_{p_i,D}, \quad t > 0, \quad i=1,\ldots,n,\\ 
\bv_i(\bx,t) \cdot {\bm n} (\bx) &=& q_{i,N}(\bx,t)  \qquad \mbox{for }  \bx \in \Gamma_{p_i,N}, \quad t > 0 ,\quad i=1,\ldots,n,\\ 
\bu(\bx,t) &=& {\bu}_D(\bx,t)  \;\qquad \mbox{for }  \bx \in \Gamma_{\bu,D}, \,\quad t > 0, \\ 
({\sig(\bx,t)}-\sum_{i=1}^{n}\alpha_i p_i  \bm I) \, {\bn} (\bx) &=& {\bm g}_N(\bx,t) \;\, \qquad \mbox{for }  \bx \in \Gamma_{\bu,N}, \,\quad t > 0,
\end{eqnarray}
\end{subequations}
where for $i=1,\ldots,n$ it is fulfilled 
$\Gamma_{p_i,D} \cap \Gamma_{p_i,N} = \emptyset$,
$\overline{\Gamma}_{p_i,D}\cup \overline{\Gamma}_{p_i,N}=\Gamma=\partial{\Omega}$
and
$\Gamma_{\bu,D} \cap \Gamma_{\bu,N} = \emptyset$,
$\overline{\Gamma}_{\bu,D} \cup \overline{\Gamma}_{\bu,N}=\Gamma$.
{Initial conditions at the time $t=0$ to complement the boundary conditions~\eqref{eq:MPET_BC},  have to satisfy~\eqref{MPET1}, and
are given by}
\begin{subequations}\label{eq:Biot_IC}
\begin{eqnarray}
p_i(\bx,0) &=& p_{i,0}(\bx) \qquad \bx \in \Omega,\quad i=1,\ldots,n, \\
\bu (\bx,0) &=& \bu_0(\bx) \qquad\;\, \bx \in \Omega.
\end{eqnarray}
\end{subequations}

The stress variable $\sig$ is eliminated from the MPET system by substituting the constitutive
equation~\eqref{constitutive} in~\eqref{MPET1} thus obtaining the classical flux-based formulation of the MPET model.

To solve numerically the time-dependent problem, the backward Euler method is employed for time discretization
resulting in the following system of time-step equations:

\begin{equation}\label{eq:1}
\mathcal{A}
\begin{bmatrix}
\bu^k \\
\bv_1^k\\
\vdots\\
\bv_n^k\\
p_1^k\\
\vdots\\
p_n^k\\
\end{bmatrix}
=
\begin{bmatrix}
\bf^k \\
\bo\\
\vdots\\
\bo\\
{g}_1^k\\
\vdots\\
{g}_n^k\\
\end{bmatrix},
\end{equation}
where
\begin{equation}\label{eq:2}
\mathcal{A}:=
\begin{bmatrix}
-2 \mu \divv \ep  - \lambda \nabla \divv     & 0  & \dots     & \dots&0      &  \alpha_1 \nabla & \dots & \dots &\alpha_n \nabla \\
\\
0      &\tau K^{-1}_1I &     0      & \dots  &0         & \tau \nabla  &  0      & \dots  &0 \\
\vdots &  0           &     \ddots &     & \vdots       &  0           &     \ddots &     & \vdots \\
\vdots &      \vdots  &            &\ddots  & 0            &       \vdots  &            &\ddots  & 0 \\
0      &     0        &  \dots     & 0      & \tau K^{-1}_nI&  0        &  \dots     & 0   &\tau \nabla\\
\\
-\alpha_1 \divv &- \tau \divv&0 &\dots   &  0            & \tau\tilde {\beta}_{11}I &\tau \beta_{12}I & \dots& \tau \beta_{1n}I\\
\vdots                 &   0       &   \ddots        &                 & \vdots      & \tau \beta_{21}I & \ddots & &{\tau \beta_{2n}}I\\
\vdots                 &  \vdots  &              & \ddots              & 0    &  \vdots &  & \ddots & \vdots\\
-\alpha_n \divv &   0    & \dots &   0    &- \tau \divv  &\tau \beta_{n1}I  &  { \tau \beta_{n2} } I &   \dots   &\tau \tilde {\beta}_{nn}I \\     
\end{bmatrix},
\end{equation}
$$
\tilde {\beta}_{ii}=- \frac{c_{p_i}}{\tau} - \beta_{ii}, ~\hbox{and}~\beta_{ii} = \sum_{\substack{j=1\\j\neq i}}^{n}\beta_{ij},~ i=1,\dots, n. 
$$
The unknown time-step functions $\bu^k$, $\bv_i^k$, $p_i^k$ for $i=1,\ldots,n$ at any given time $t = t_k = t_{k-1} + \tau$ are defined as
$$
\begin{array}{cccl}
\bu^k &=& \bu(x,t_k) \in \bU &:= \{ \bu\in H^1(\Omega)^d : \bu = \bu_D \text{ on } \Gamma_{\bu,D} \},\\
\bv_i^k &=&\bv_i(x,t_k) \in \bm V_i &:= \{ \bv_i\in H(\divv,\Omega) :\bv_i\cdot \bm{n} = q_{i,N} \text{ on } \Gamma_{p_i,N} \},\\
p_i^k &=& p_i(x,t_k) \in P_i &:= L^2(\Omega),
\end{array}
$$
whereas the right hand side time-step functions are $\bf^k = \bf(x, t_k)$, ${g}^k_i = -\tau g_i(x, t_k) - \alpha_i \divv(\bu^{k-1}) - c_{p_i}p_i^{k-1}$, $i=1,\ldots,n$. 
Later, the static problem~\eqref{eq:1}--\eqref{eq:2} is considered and, for convenience, the superscript for the time-step functions is dropped, that is, 
$\bu^k,\bv_i^k$ and $p_i^k$ will be denoted by $\bu,\bv_i$ and $p_i$, respectively.

The considered function spaces are as follows: 
\begin{itemize}
\item $L^2(\Omega)$ is the space of square Lebesgue
integrable functions equipped with the standard $L^2$ norm $\|\cdot\|$;
\item $H^1(\Omega)^d$ denotes the
space of vector-valued $H^1$-functions equipped with the norm $\| \cdot \|_1$ for which 
$\|\bm u\|^2_1:=\|\bm u\|^2+\|\nabla \bm u\|^2$;
\item $
H(\div ;\Omega):=\{\bm v \in L^2(\Omega)^d: \div  \bm v\in L^2(\Omega)\}
$
with norm $\|\cdot\|_{\div}$ defined by
$\|\bm v\|^2_{\div}:=\|\bm v\|^2+\|\div \bm v\|^2$.
\end{itemize}

When the case $\Gamma_{\bu,D}=\Gamma_{p_i,N}=\Gamma$ and ${\bu}_D=\bo$, $q_{i,N}=0$ is considered, the notations 
$\bU=H^1_0(\Omega)^d$ and  $\bm V_i=H_0(\div, \Omega)$, $i=1,\ldots,n$ are used. To guarantee the uniqueness of the 
solution for the pressure variables $p_i$, we set $P_i= L^2_0(\Omega):=\{ p \in L^2(\Omega) : \int_{\Omega} p \, d\bx = 0 \}$ for
$i=1,\ldots,n$.

\section{Stability analysis} \label{sec:par_rob_stab_model}
First the parameter $\mu$ is eliminated from the system by dividing equations~\eqref{eq:1}--\eqref{eq:2} by $2 \mu$ and
making the substitutions:
{
$$
2\mu \rightarrow 1, \frac{\lambda}{2\mu}\rightarrow \lambda, \frac{\alpha_i}{2\mu}\rightarrow \alpha_{i}, \frac{\bm f}{2\mu}\rightarrow \bm f, \frac{\tau}{2\mu}\rightarrow \tau, 
\frac{c_{p_i}}{2\mu}\rightarrow c_{p_i}, \frac{g_i}{2\mu} \rightarrow g_i,~ \text{for}~ i=1,\ldots,n.
$$
}
Equation~\eqref{eq:1} then becomes
\begin{subequations}\label{eq:3}
	\begin{align}
	-\divv \ep(\bu)  - \lambda \nabla \divv \bu +\sum_{i=1}^{n}\alpha_i \nabla p_i &= \bf, \label{eq:3a}  \\
	\tau K^{-1}_i\bv_i + \tau \nabla p_i  &= \bo, \qquad i=1,\dots,n,  \label{eq:3b}\\
	-\alpha_i \divv\bu - \tau \divv\bv_i -c_{p_i}p_i -\tau \sum_{\substack{{j=1}\\j\neq i}}^{n}\beta_{ij}(p_i -p_j) &= g_i, \qquad i=1,\dots,n. \label{eq:3c}
	\end{align}
\end{subequations}
Next, equation~\eqref{eq:3b} is multiplied by $\alpha_i \tau^{-1}$, equation~\eqref{eq:3c} {is multiplied} by $\alpha_i^{-1}$
so that the substitutions 
$$
 \tilde{\bv}_i:= \frac{\tau}{\alpha_i} \bv_i, \quad \tilde{p}_i: =\alpha_i p_i, \quad \tilde{g}_i:=\frac{g_i}{\alpha_i}
$$
yield
\begin{subequations}\label{eq:4}
	\begin{align}
	-\divv \ep(\bu) - \lambda \nabla \divv \bu +\sum_{i=1}^{n} \nabla \tilde{p}_i &= \bf,\\
	\tau^{-1} K^{-1}_i\alpha_i^2\tilde{\bv}_i + \nabla \tilde{p}_i &= \bo, \qquad i=1,\dots,n,\\
	-\divv \bu - \divv\tilde{\bv}_i  -\frac{c_{p_i}}{\alpha_i^2} \tilde{p}_i + \sum_{\substack{{j=1}\\j\neq i}}^{n} \left(-\frac{\tau\beta_{ij}}{\alpha_i^2} \tilde{p}_i+\frac{\tau\beta_{ij}}{\alpha_i\alpha_j} \tilde{p}_j\right) 
	&= \tilde{g}_i,\qquad i=1,\dots,n.
	\end{align}
\end{subequations}
For convenience, { the ``tilde" symbol} is skipped and system~\eqref{eq:4} is written as:
\begin{subequations}\label{eq:5}
	\begin{align}
	-\divv \ep(\bu) - \lambda \nabla \divv \bu +\sum_{i=1}^{n} \nabla p_i &= \bf,\\
	\tau^{-1} K^{-1}_i\alpha_i^2\bv_i + \nabla p_i &= \bo, \qquad i=1,\dots,n,\\
	-\divv \bu - \divv\bv_i  -\frac{c_{p_i}}{\alpha_i^2} p_i + \sum_{\substack{{j=1}\\j\neq i}}^{n} \left(-\frac{\tau\beta_{ij}}{\alpha_i^2} p_i+\frac{\tau\beta_{ij}}{\alpha_i\alpha_j} p_j\right) &= g_i,\qquad i=1,\dots,n.
	\end{align}
\end{subequations}
Further, we denote
\begin{align*}
R^{-1}_i = \tau^{-1}K_i^{-1}\alpha_i^2, \quad
\alpha_{p_i} = \frac{c_{p_i}}{\alpha_i^2}, \quad 
{\alpha}_{ij} = \frac{\tau \beta_{ij}}{\alpha_i \alpha_j},~ i,j=1,\cdots,n,
\end{align*}
and make the rather general and reasonable assumptions that
\begin{align*}
{ \lambda > 0},\quad  R^{-1}_i > 0,\;\; \alpha_{p_i} \geq 0\;\;\text{for}\;\; i=1,\ldots,n,\quad \text{and}\quad {\alpha}_{ij}\geq 0\;\; \text{for}\;\;i,j=1,\ldots,n.
\end{align*}
Making use of these substitutions, without loss of generality, system~\eqref{eq:1} becomes
\begin{subequations}\label{eq:6}
	\begin{align}
	-\divv \ep(\bu) - \lambda \nabla \divv \bu +\sum_{i=1}^{n} \nabla p_i &= \bf,\label{eq:6,1}\\
	R^{-1}_i\bv_i + \nabla p_i &= \bo, \qquad i=1,\dots,n, \label{eq:6,2}\\
	-\divv \bu - \divv\bv_i  -(\alpha_{p_i} + {\alpha}_{ii}) p_i+\sum_{\substack{{j=1}\\j\neq i}}^{n}{\alpha}_{ij} p_j &= g_i,\qquad i=1,\dots,n, \label{eq:6,3}
	\end{align}
\end{subequations}
or 
\begin{equation}\label{eq:7}
\mathcal{A}
\begin{bmatrix}
\bu \\
\bv_1\\
\vdots\\
\bv_n\\
p_1\\
\vdots\\
p_n\\
\end{bmatrix}
=
\begin{bmatrix}
\bf \\
\bo\\
\vdots\\
\bo\\
{g}_1\\
\vdots\\
{g}_n\\
\end{bmatrix}
\end{equation}
where  

\begin{equation}\label{operator:A}
\mathcal{A}:=
\begin{bmatrix}
- \divv \ep  - \lambda \nabla \divv     & 0  & \dots     & \dots&0      &   \nabla & \dots & \dots & \nabla    \\
\\
0      & R_1^{-1}I &     0      & \dots  &0         &  \nabla  &  0      & \dots  &0 \\
\vdots &  0           &     \ddots &     & \vdots       &  0           &     \ddots &     & \vdots \\
\vdots &      \vdots  &            &\ddots  & 0            &       \vdots  &            &\ddots  & 0 \\
0      &     0        &  \dots     & 0      &R_n^{-1}I&  0        &  \dots     & 0   & \nabla\\
\\
- \divv &-  \divv&0 &\dots   &  0            &\tilde\alpha_{11}I& \alpha_{12} I & \dots& \alpha_{1n}I\\
\vdots                 &   0       &   \ddots        &                 & \vdots      &  \alpha_{21}I  & \ddots & & \alpha_{2n}I\\
\vdots                 &  \vdots  &              & \ddots              & 0    &  \vdots &  & \ddots & \vdots\\
- \divv &   0    & \dots &   0    &-  \divv  & \alpha_{n1}I  &  \alpha_{n2}I &   \dots   & \tilde\alpha_{nn} I  \\     
\end{bmatrix}
\end{equation}
is the scaled operator from~\eqref{eq:2} and  $\tilde\alpha_{ii}=- \alpha_{p_i} - \alpha_{ii}, i=1,\dots,n$.

%
For convenience, let $\bv^T=(\bv_1^T,\dots,\bv_n^T)$, $\bp^T=(p_1,\dots,p_n)$, $\bz^T=(\bz_1^T,\dots,\bz_n^T)$,
$\bq^T=(q_1,\dots,q_n)$ and  $\bV=\bV_1\times \dots\times \bV_n$, $\bP=P_1\times \dots\times P_n$. 
Taking into account the boundary conditions, system~\eqref{eq:6} has the following weak formulation: 
Find $(\bu; \bv;\bp) \in \bU\times \bV\times \bP$, 
such that for any $(\bw; \bz;\bq)  \in \bU\times \bV\times \bP$ there holds 
\begin{subequations}\label{eq:8}
	\begin{align}
	(\ep(\bu), \ep(\bw))+ \lambda  (\divv\bu,\divv \bw) -\sum_{i=1}^{n} (p_i,\divv \bw) &= (\bf,\bw)\label{eq:8a}\\
	 (R^{-1}_i\bv_i,\bz_i) {-} (p_i,\divv \bz_i) &= 0, \quad i=1,\dots,n,\label{eq:8b}\\
	-(\divv\bu,q_i) - (\divv\bv_i,q_i)  -(\alpha_{p_i}+{\alpha}_{ii}) (p_i,q_i) + \sum_{\substack{{j=1}\\j\neq i}}^{n}  \alpha_{ij}(p_j,q_i) &= (g_i,q_i) ,\quad  i=1,\dots,n.\label{eq:8c}
	\end{align}
\end{subequations}
Following~\cite{lipnikov2002numerical}, we first consider the following Hilbert spaces and weighted norms
\begin{align}
\bU &= H_0^1(\Omega)^d, \qquad\qquad~(\bu,\bw)_{\bU} = (\ep(\bu),\ep(\bw)) + \lambda(\divv\bu,\divv\bw),\\
\bm V_i&= H_0({\rm div},\Omega), \qquad\quad(\bv_i,\bz_i)_{\bV_i} = (R_i^{-1}\bv_i,\bz_i) + (R_i^{-1}{\rm div}\bv_i,{\rm div}\bz_i),\qquad i=1,\dots,n,\\
P_i&= L_0^2(\Omega), \qquad\quad \quad\quad(p_i,q_i)_{P_i}=(p_i,q_i),\qquad i=1,\dots,n
\end{align}
System~\eqref{eq:8}, however, is not uniformly stable with respect to the parameters $R_i^{-1}$ under these norms as shown
in~\cite{HongKraus2017parameter}. 
Therefore, proper parameter-dependent norms for the spaces $\bU$, $\bm V_i$, $P_i$, $i=1,\ldots,n$, have to be introduced
that allow to establish the parameter-robust stability of the MPET model~\eqref{eq:8} for parameters in the ranges
\begin{align}\label{parameter:range}
{\lambda > 0,}\quad  R^{-1}_1 ,\dots, R^{-1}_n > 0 ,\quad \alpha_{p_1},\dots,\alpha_{p_n} \ge 0,\quad \alpha_{ij}\ge 0, ~~~i, j=1,\dots, n.
\end{align}
From experience, we know that the largest of the values $R^{-1}_i, i=1,\dots,n$ is important to us, and we note that the term
$(\ep(\bu),\ep(\bw))$ dominates in the elasticity form when $\lambda \ll 1$. Hence, we define
\begin{equation}
R^{-1} = \max\{ R_1^{-1} , \dots,R_n^{-1} \},~~\lambda_0=\max\{1, \lambda\}.
\end{equation}
Again by trial and error, we find that we have to deal with the parameters in a ``matrix" format. Therefore, we define the following
$n\times n$ matrices
\begin{align*}
\C _{1}&=  
\begin{bmatrix}
\alpha_{11} & -\alpha_{12} & \dots &-\alpha_{1n}  \\
-\alpha_{21} & \alpha_{22} & \dots &-\alpha_{2n}  \\
\vdots & \vdots & \ddots & \vdots  \\
-\alpha_{n1} & -\alpha_{n2} & \dots &\alpha_{nn}  
\end{bmatrix},\qquad
\C _2=
\begin{bmatrix}
{\alpha_{p_1}} &0&\dots &0\\
0&{\alpha_{p_2}}&\dots &0\\
\vdots&\vdots &\ddots&\vdots\\
0&0&\dots&  {\alpha_{p_n}}
\end{bmatrix},\\
 \C_{3}&=
\begin{bmatrix}
R &0&\dots &0\\
0&R&\dots &0\\
\vdots& \vdots&\ddots&\vdots\\
0&0&\dots & R
\end{bmatrix},\qquad
{ \C_{4}=
\begin{bmatrix}
\frac{1}{\lambda_0} &\dots& {\dots} & \frac{1}{\lambda_0}\\
\vdots& & &\vdots\\
\vdots& & &\vdots\\
\frac{1}{\lambda_0}&\dots & {\dots}&\frac{1}{\lambda_0}
\end{bmatrix}.
}
\end{align*}
From the definition of
$\alpha_{ij}=\frac{\tau\beta_{ij}}{\alpha_i\alpha_j}$, $\beta_{ii}= \sum\limits_{\substack{{j=1}\\j\neq i}}^{n}\beta_{ij}$
and $\beta_{ij}=\beta_{ji}$, it is obvious that $\C_1$ is symmetric positive semidefinite (SPSD).
Since $\alpha_{p_i}\ge 0$, we have that $\C_2$ is SPSD. Noting that $R>0$,
it follows that $\C_3$ is symmetric positive definite (SPD). Moreover, it is obvious that $\C_4$ is a rank-one
matrix with eigenvalues $\lambda_i =  0,~i=1,\dots,n-1 $ and $ \lambda_n = \frac{n}{\lambda}$. 
\begin{remark}
Let $\bm g^T=(g_1, \cdots,g_n), \bm g_c=\frac{1}{|\Omega|}\int_{\Omega}\bm g dx$ and $\C_g=[\C_1+\C_2, \bm g_c]$
be the matrix that is obtained by augmenting $\C_1+\C_2$ with the column $\bm g_c$. In general, we assume that
$\int_{\Omega}\bm g dx=\bm 0$.
When $\C_1+\C_2$ is the zero matrix, this assumption is a ``(classical) consistency condition''.
If $\C_1+\C_2$ is nonzero and $\int_{\Omega}\bm g dx\neq\bm 0$, then $\bm g$ has to satisfy the ``general consistency condition"
${\rm rank}(\C_1+\C_2)={\rm rank}(\C_g)$, where ${\rm rank}(X)$ denotes the rank of a matrix $X$. 
In this case, there must be $\bm p_c^T=(p_{1,c},\cdots, p_{n,c})\in \mathbb R^n$ such that
$(\C_1+\C_2)\bm p_c=\bm g_c$ (in many applications, $\C_1+\C_2$ is invertible and $\bm p_c=(\C_1+\C_2)^{-1}\bm g_c$).
Hence, we can decompose $\bm g=\bm g_0+\bm g_c$ where $\bm g_0=\bm g-\frac{1}{|\Omega|}\int_{\Omega} \bm g dx$,
$\bm g_c=\frac{1}{|\Omega|}\int_{\Omega}\bm gdx$, and thus $\int_{\Omega}\bm g_0 dx=\bm 0$.
Then the solution $(\boldsymbol u; \boldsymbol v; \bm p)$ can be decomposed according to 
$(\boldsymbol u; \boldsymbol v; \bm p)=(\boldsymbol u; \boldsymbol v; \bm p_0)+(\boldsymbol 0; \boldsymbol 0; \bm p_c)$ where
$\bm p_0^T=(p_{1,0},\cdots, p_{n,0})\in L^2_0(\Omega)\times\cdots \times L^2_0(\Omega) $ and $\bm p_c$ is a basic solution
of $(\C_1+\C_2)\bm p_c=\bm g_c$. Therefore we only need to consider the case when $\int_{\Omega}\bm g dx=\bm 0$.
\end{remark}
\noindent
Now we introduce the SPD matrix
\begin{equation}\label{eq:split_lambda}
\C  = {\sum_{i=1}^{4}\C _i}.
\end{equation}
As we will see, it will play an important role in the definition of proper norms and the splitting \eqref{eq:split_lambda} 
in our analysis. 
The crucial idea is that we equip the Hilbert spaces $\bU, \bV,\bP$ with parameter-matrix-dependent norms
$\|\cdot\|_{\bU}$, $\|\cdot\|_{\bV}$, $\|\cdot\|_{\bP}$ induced by the following {\it inner products}:
\begin{subequations}\label{norms}
\begin{align}
(\bu,\bw)_{\bU}&= (\ep(\bu),\ep(\bw)) + \lambda(\divv\bu,\divv\bw),\label{norms-u}\\
(\bv,\bz)_{\bV}&= \sum_{i=1}^n(R_i^{-1}\bv_i,\bz_i) + (\C^{-1}  \Divv\bv,\Divv\bz),\label{norms-v}\\
(\bp,\bq)_{\bP}&= (\C \bp,\bq),\label{norms-p}
\end{align}
\end{subequations}
where $\bp^T=(p_1,\dots,p_n)$, $\bv^T=(\bv_1^T,\dots,\bv_n^T)$, $(\Divv\bv)^T= (\divv\bv_1, \ldots, \divv\bv_n)$. 

It is easy to show that \eqref{norms-u}-\eqref{norms-p} are indeed inner products on $\bU, \bV,\bP$ respectively. 
It should be noted that $\Divv\bv, \Divv\bz$ and $\bm p,\bm q$ are vectors and the SPD matrix $\C$ is used to
define the norms. These novel parameter-matrix-dependent norms play a key role in the analysis of the uniform
stability for the MPET model. 
We further point out that for $n=1$, the norms defined by~\eqref{norms} are slightly different, but equivalent
to the norms that were used in~\cite{HongKraus2017parameter} to establish the parameter-robust inf-sup stability
of the three-field formulation of Biot's model of consolidation. 

The main result of this section is a proof of the uniform well-posedness of problem~\eqref{eq:8} under the norms induced 
by~\eqref{norms}. Firstly, directly related to problem~\eqref{eq:8}, we introduce the bilinear form
  \begin{align*}
&\mathcal{A}((\bu;\bv; \bm p),(\bw;\bz;\bm q))= (\ep(\bu), \ep(\bw))+ \lambda  (\divv\bu,\divv \bw) -\sum_{i=1}^{n} (p_i,\divv \bw) 
+\sum_{i=1}^{n} (R^{-1}_i\bv_i,\bz_i) \\&- \sum_{i=1}^{n}(p_i,\divv \bz_i)
 -\sum_{i=1}^{n}(\divv\bu,q_i)-\sum_{i=1}^{n}(\divv\bv_i,q_i)  -\sum_{i=1}^{n}(\alpha_{p_i}+\alpha_{ii})(p_i,q_i) 
+ \sum_{i=1}^{n}\sum_{\substack{j=1\\j\neq i}}^{n} \alpha_{ji} (p_j,q_i),
 \end{align*}
which, in view of the definition of the matrices $\C_1$ and $\C_2$, can be written in the form
 \begin{align*}
 \mathcal{A}((\bu;\bv; \bm p),(\bw;\bz;\bm q))&= (\ep(\bu),\ep(\bw)) + \lambda (\divv\bu,\divv\bw) -(\sum_{i=1}^{n}p_i,\divv \bw)
  +\sum_{i=1}^n(R_i^{-1}\bv_i,\bz_i) - (\bp, \Divv \bz)
\\&-(\divv\bu,\sum_{i=1}^{n}q_i)- (\Divv \bv,\bq) - ((\C_1+\C_2)\bp,\bq).
  \end{align*}
  
Then the following theorem shows the boundedness of $\mathcal{A}((\cdot;\cdot; \cdot),(\cdot;\cdot;\cdot))$ in the norms
induced by~\eqref{norms}.

\begin{theorem}\label{bound} 
There exists a constant $C_{b}$ independent of  the parameters $\lambda,R_i^{-1}, \alpha_{p_i}, {\alpha}_{ij}, i,j=1,\dots,n$ and the network scale $n$, such that  for any $(\bm u; \bm v; \bm p)\in \bm U\times\bm V\times \bm P, (\bm w;\bm z;\bm q)\in \bm U\times\bm V\times \bm P$
\begin{equation*}
|\mathcal A((\boldsymbol u; \boldsymbol v;\bm p), (\boldsymbol w;\boldsymbol z;\bm q))|\le C_{b} (\|\bm u\|_{\bm U}+\|\bm v\|_{\bm V}+\|\bm p\|_P)  (\|\bm w\|_{\bm U}+\|\bm z\|_{\bm V}+\|\bm q\|_P).
\end{equation*}
\end{theorem}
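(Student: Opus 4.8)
The plan is to expand $\mathcal A$ into its eight constituent terms and bound each separately, in every case extracting a product of one trial-function norm and one test-function norm with a constant depending at most on the spatial dimension $d$. Summing the eight estimates and noting that the product $(\|\bu\|_{\bU}+\|\bv\|_{\bV}+\|\bp\|_{\bP})(\|\bw\|_{\bU}+\|\bz\|_{\bV}+\|\bq\|_{\bP})$ dominates every cross term that occurs then gives the claim with $C_b$ independent of all model and discretization parameters and of $n$. The whole difficulty is concentrated in the off-diagonal coupling terms; the three diagonal blocks are routine.

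First I would dispatch the diagonal contributions. The elasticity pair $(\ep(\bu),\ep(\bw))+\lambda(\divv\bu,\divv\bw)$ is exactly the inner product $(\bu,\bw)_{\bU}$, so Cauchy-Schwarz in $\|\cdot\|_{\bU}$ bounds it by $\|\bu\|_{\bU}\|\bw\|_{\bU}$. The flux mass term $\sum_i(R_i^{-1}\bv_i,\bz_i)$ is controlled by a discrete Cauchy-Schwarz over $i$ together with $\|\bv\|_{\bV}^2\ge\sum_i(R_i^{-1}\bv_i,\bv_i)$. For the reaction term $((\C_1+\C_2)\bp,\bq)$ I use $\C-(\C_1+\C_2)=\C_3+\C_4\succeq 0$, hence $\C_1+\C_2\preceq\C$; applying Cauchy-Schwarz for the SPSD form $\C_1+\C_2$ pointwise in $\Omega$ and then in $L^2$, and estimating $((\C_1+\C_2)\bp,\bp)\le(\C\bp,\bp)=\|\bp\|_{\bP}^2$, yields the bound by $\|\bp\|_{\bP}\|\bq\|_{\bP}$.

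The coupling terms are where the design of $\C$ is essential. For the flux-pressure terms $(\bp,\Divv\bz)$ and $(\Divv\bv,\bq)$ the key is that the weight $\C^{-1}$ in $(\bv,\bz)_{\bV}$ is precisely the inverse of the weight $\C$ in $(\bp,\bq)_{\bP}$: writing $\bp^T(\Divv\bz)=(\C^{1/2}\bp)^T(\C^{-1/2}\Divv\bz)$ and applying Cauchy-Schwarz pointwise and then in $L^2$ bounds $|(\bp,\Divv\bz)|$ by $(\C\bp,\bp)^{1/2}(\C^{-1}\Divv\bz,\Divv\bz)^{1/2}\le\|\bp\|_{\bP}\|\bz\|_{\bV}$, with constant one. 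For the displacement-pressure terms $(\sum_{i=1}^n p_i,\divv\bw)$ and $(\divv\bu,\sum_{i=1}^n q_i)$ I would exploit the rank-one block $\C_4=\frac{1}{\lambda_0}\mathbf 1\mathbf 1^T$: since $\C\succeq\C_4$, we have $\|\sum_i p_i\|^2=\lambda_0(\C_4\bp,\bp)\le\lambda_0(\C\bp,\bp)=\lambda_0\|\bp\|_{\bP}^2$, so $\|\sum_i p_i\|\le\sqrt{\lambda_0}\,\|\bp\|_{\bP}$. It then remains to control $\sqrt{\lambda_0}\,\|\divv\bw\|$ by $\|\bw\|_{\bU}$: if $\lambda\ge 1$ this is $\sqrt{\lambda}\,\|\divv\bw\|\le\|\bw\|_{\bU}$, while if $\lambda<1$ one uses $\divv\bw=\mathrm{tr}\,\ep(\bw)$ to get $\|\divv\bw\|\le\sqrt d\,\|\ep(\bw)\|\le\sqrt d\,\|\bw\|_{\bU}$, so that $\sqrt{\lambda_0}\,\|\divv\bw\|\le\sqrt d\,\|\bw\|_{\bU}$ in either regime.

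I expect this displacement-pressure coupling to be the only genuinely delicate step, which is natural since it is exactly the term that is not uniformly bounded under the naive norms and that motivated introducing $\C_4$. The crux is the simultaneous cancellation of two parameter dependencies, namely the factor $\sqrt{\lambda_0}$ arising from the $\C_4$-control of $\sum_i p_i$ against the $\sqrt{\lambda}$- (or $\ep$-) scaling of $\|\divv\bw\|$ hidden in $\|\bw\|_{\bU}$, and verifying that the resulting bound is genuinely uniform in $\lambda$ across both regimes $\lambda\ge 1$ and $\lambda<1$, uniform in $n$ (the constant $\sqrt d$ being independent of the number of networks because $\C_4$ collapses the entire sum $\sum_i p_i$ into a single rank-one contribution), and uniform in the remaining parameters. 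Once this point is secured, all other terms reduce to standard Cauchy-Schwarz applied in the matrix-weighted inner products, and taking $C_b$ to be the largest of the individual constants (of order $\sqrt d$) completes the estimate.
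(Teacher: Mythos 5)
Your proof is correct and takes essentially the same route as the paper's: term-by-term Cauchy--Schwarz with the matrix weights $\C^{\pm 1/2}$, the identity $\frac{1}{\lambda_0}\|\sum_{i=1}^n p_i\|^2=(\C_4\bp,\bp)\le(\C\bp,\bp)$ to handle the displacement--pressure coupling, and $\C_1+\C_2\preceq\C$ for the reaction term. The only cosmetic difference is that you close the $\lambda<1$ case with the pointwise trace bound $\|\divv\bw\|\le\sqrt{d}\,\|\ep(\bw)\|$, whereas the paper invokes $\|\divv\bw\|\le\|\ep(\bw)\|$ (valid on $H_0^1(\Omega)^d$), which affects only the value of $C_b$.
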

\begin{proof}
From the definition of the bilinear form, by using Cauchy's inequality, we obtain
 \begin{align*}
 \mathcal{A}((\bu;\bv;\bm p),(\bw;\bz;\bm q))&= (\ep(\bu),\ep(\bw)) + \lambda (\divv\bu,\divv\bw) -(\sum_{i=1}^{n}p_i,\divv \bw)
 \\& +\sum_{i=1}^n(R_i^{-1} \bv_i,\bz_i) - (\bp, \Divv \bz)
-(\divv\bu,\sum_{i=1}^{n}q_i)- (\Divv \bv,\bq) - ((\C_1+\C_2)\bp,\bq)\\
&\leq \|\ep(\bu)\|\|\ep(\bw)\| + \lambda \|\divv\bu\| \|\divv\bw\| + \frac{1}{\sqrt{\lambda_0}}\|\sum_{i=1}^{n}p_i\|\sqrt{\lambda_0} \|\divv \bw\|
\\&+\sum_{i=1}^n(R_i^{-1}\bv_i,\bv_i)^{\frac{1}{2}}(R_i^{-1}\bz_i,\bz_i)^{\frac{1}{2}} +\| \C^{\frac{1}{2}}\bp\|\| \C^{-\frac{1}{2}}\Divv\bz\|+\sqrt{\lambda_0} \|\divv\bu\|  \frac{1}{\sqrt{\lambda_0}}\|\sum_{i=1}^{n}q_i\|\\&+
\|\C^{-\frac{1}{2}}\Divv\bv\| \|\C^{\frac{1}{2}}\bq\| +\|(\C_1+\C_2)^{\frac{1}{2}}\bp\| \|(\C_1+\C_2)^{\frac{1}{2}}\bq\| .
 \end{align*}
Then, another application of Cauchy's inequality, in view of the definition of $\C_4$, yields
 \begin{align*}
 \mathcal{A}((\bu;\bv;\bm p),(\bw;\bz;\bm q))&\leq \|\ep(\bu)\|\|\ep(\bw)\| + \lambda \|\divv\bu\| \|\divv\bw\| + \|\C_4^{\frac{1}{2}}\bp\|\sqrt{\lambda_0} \|\divv \bw\|\\
 &+\Big(\sum_{i=1}^n(R_i^{-1}\bv_i,\bv_i)\Big)^{\frac{1}{2}}\Big(\sum_{i=1}^n(R_i^{-1}\bz_i,\bz_i)\Big)^{\frac{1}{2}}+\| \C^{\frac{1}{2}}\bp\|\| \C^{-\frac{1}{2}}\Divv\bz\|\\&+\sqrt{\lambda_0} \|\divv\bu\|\|\C_4^{\frac{1}{2}}\bq\|+
 \|\C^{-\frac{1}{2}}\Divv\bv\| \|\C^{\frac{1}{2}}\bq\| +\|(\C_1+\C_2)^{\frac{1}{2}}\bp\|\|(\C_1+\C_2)^{\frac{1}{2}}\bq\|.
 \end{align*}
 \end{proof}
 
 
Before we study the uniform inf-sup condition for the MPET equations, we recall the following well known results, see, e.g.~\cite{ Brezzi1974existence,Boffi2013mixed}:
\begin{lemma}\label{divinf-sup}
There exists a constant $\beta_v > 0$ such that
\begin{align}
\inf_{q\in P_i} \sup_{\bv\in \bV_i} \frac{({\rm div} \bv, q)}{\|\bv\|_{\rm \div}\|q\|} \geq \beta_v,~i=1,\dots,n.
\end{align}
\end{lemma}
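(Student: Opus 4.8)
The plan is to prove the inf-sup bound constructively: for each fixed $i$ and each nonzero $q \in P_i = L^2_0(\Omega)$, I would exhibit one explicit competitor $\bv \in \bV_i = H_0(\div,\Omega)$ for which the Rayleigh-type quotient is already bounded below by a constant depending only on $\Omega$. Since the spaces $\bV_i$ and $P_i$ are the same for every $i=1,\dots,n$, a single such constant $\beta_v$ serves all indices, which is precisely the uniformity in $i$ claimed in the statement.

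The reduction I would use is to the classical surjectivity of the divergence. Recall the well-known LBB inf-sup condition for the Stokes problem: there is a constant $\beta>0$, depending only on $\Omega$, such that for every $q \in L^2_0(\Omega)$ there exists $\bw \in H^1_0(\Omega)^d$ with $\div \bw = q$ and $\|\bw\|_1 \le \beta^{-1}\|q\|$; equivalently, $\div$ admits a bounded right inverse from $L^2_0(\Omega)$ into $H^1_0(\Omega)^d$ (see~\cite{Boffi2013mixed}). The key step is the continuous embedding $H^1_0(\Omega)^d \hookrightarrow H_0(\div,\Omega)$ with $\|\bw\|_{\div}\le \|\bw\|_1$, which holds because $\|\div\bw\| \le \|\nabla\bw\| \le \|\bw\|_1$ and $\|\bw\| \le \|\bw\|_1$. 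Taking the Stokes lifting $\bv := \bw \in \bV_i$ as competitor, I would estimate
\[
\sup_{\bv\in\bV_i}\frac{(\div\bv,q)}{\|\bv\|_{\div}\|q\|} \ge \frac{(\div\bw,q)}{\|\bw\|_{\div}\|q\|} = \frac{\|q\|^2}{\|\bw\|_{\div}\|q\|} \ge \frac{\|q\|^2}{\|\bw\|_1\|q\|}\ge \beta ,
\]
so that the infimum over $q \in P_i$ is bounded below by $\beta_v := \beta$, independently of $i$.

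The only genuine content is the existence of the bounded right inverse of $\div$, that is, the surjectivity of $\div : H^1_0(\Omega)^d \to L^2_0(\Omega)$ with a bound independent of $q$; this is where I expect the main difficulty to lie if a self-contained argument is desired. It is classical and can be obtained via the Bogovskii operator on Lipschitz (or star-shaped) domains. Alternatively, I could bypass the Stokes result entirely and argue directly in $H_0(\div,\Omega)$: solve the Neumann problem $-\Delta\phi = q$ in $\Omega$, $\partial_n\phi = 0$ on $\partial\Omega$, which is well posed precisely because $\int_\Omega q = 0$, and set $\bv = -\nabla\phi$. Then $\bv\cdot\bn = -\partial_n\phi = 0$ on $\partial\Omega$ gives $\bv \in H_0(\div,\Omega)$, while $\div\bv = -\Delta\phi = q$ and the a priori bound $\|\nabla\phi\|\le C\|q\|$ (from the weak form together with a Poincar\'e inequality) yield $\|\bv\|_{\div}\le C\|q\|$. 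Either route produces the same purely geometric constant $\beta_v$. I would stress that no model or discretization parameter enters this lemma, so no parameter-robustness issue arises here — the weighted, parameter-matrix-dependent norms of the preceding discussion play no role, and the quotient is measured in the plain $\|\cdot\|_{\div}$ and $L^2$ norms.
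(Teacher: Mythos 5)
Your proof is correct. Note first that the paper itself does not prove Lemma~\ref{divinf-sup}: it is merely recalled as a well-known result with a pointer to~\cite{Brezzi1974existence,Boffi2013mixed}, so there is no in-paper argument to compare against, and what you have written supplies the justification the paper delegates to the literature. Of your two routes, the second one (solve the Neumann problem $-\Delta\phi=q$, $\partial_n\phi=0$, and take $\bv=-\nabla\phi$) is the classical textbook proof of the $H(\div)$ inf-sup condition and is the leaner choice here, since it needs only the Neumann Laplacian and the Poincar\'e--Wirtinger inequality rather than the Bogovskii/Stokes right inverse; for completeness you should observe that the weak formulation $(\nabla\phi,\nabla\psi)=(q,\psi)$ for all $\psi\in H^1(\Omega)$ yields simultaneously $\div\bv=q$ in $L^2(\Omega)$ and the vanishing of the normal trace of $\bv$ in $H^{-1/2}(\partial\Omega)$, which is the precise sense in which $\bv\in H_0(\div,\Omega)$. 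Your first route is also sound, but the inequality $\|\div\bw\|\le\|\nabla\bw\|$ is not the naive Cauchy--Schwarz bound (that only gives the constant $\sqrt{d}$); with constant $1$ it relies on the integration-by-parts identity valid for $\bw\in H^1_0(\Omega)^d$. This is harmless either way, since carrying a factor $\sqrt{d}$ merely replaces $\beta_v=\beta$ by $\beta/\sqrt{d}$. Finally, your two structural remarks are exactly right and worth making explicit: the spaces and norms in the lemma are identical and parameter-free for every $i$, so a single geometric constant serves all indices, and none of the weighted, parameter-matrix-dependent norms of the surrounding analysis plays any role here.
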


\begin{lemma}\label{stokesinf-sup}
There exists a constant $\beta_s > 0$ such that
\begin{align}\displaystyle
\inf_{(q_1,\cdots,q_n)\in P_1\times\cdots\times P_n} \sup_{\bu\in \bU} \frac{({\rm div} \bu, \sum\limits_{i=1}^n q_i)}{\|\bu\|_1\|\sum\limits_{i=1}^n q_i\|} \geq \beta_s.  
\end{align}
\end{lemma}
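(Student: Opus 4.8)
The plan is to reduce the assertion to the classical inf-sup (LBB) condition for the Stokes problem by collapsing the tuple of pressures into a single scalar field. First I would set $q := \sum_{i=1}^n q_i$. Since each $q_i \in P_i = L^2_0(\Omega)$ has vanishing mean over $\Omega$, so does their sum, whence $q \in L^2_0(\Omega)$ and $\|\sum_{i=1}^n q_i\| = \|q\|$. The decisive observation is that both the numerator $(\div \bu, \sum_{i=1}^n q_i) = (\div \bu, q)$ and the denominator depend on the tuple $(q_1,\dots,q_n)$ only through $q$; for fixed $q$ the supremum $\sup_{\bu \in \bU} (\div\bu, q)/\|\bu\|_1$ is nothing but the dual norm of the functional $\bu \mapsto (\div\bu, q)$.

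Next I would note that the summation map $(q_1,\dots,q_n) \mapsto \sum_{i=1}^n q_i$ from $P_1\times\cdots\times P_n$ onto $L^2_0(\Omega)$ is surjective, as one sees by taking $q_1 = q$ and $q_2 = \cdots = q_n = 0$. Consequently the infimum over tuples equals the infimum over a single field,
\[
\inf_{(q_1,\dots,q_n)} \sup_{\bu \in \bU} \frac{(\div\bu, \sum_{i=1}^n q_i)}{\|\bu\|_1\,\|\sum_{i=1}^n q_i\|} = \inf_{q \in L^2_0(\Omega)} \sup_{\bu \in \bU} \frac{(\div\bu, q)}{\|\bu\|_1\,\|q\|},
\]
and the right-hand side is precisely the Stokes inf-sup quotient for the pair $H^1_0(\Omega)^d / L^2_0(\Omega)$.

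To bound this quotient from below I would invoke the well-known surjectivity of the divergence operator $\div : H^1_0(\Omega)^d \to L^2_0(\Omega)$ with a bounded right inverse (Bogovskii operator, equivalently the Ne\v{c}as--Lions lemma): for every $q \in L^2_0(\Omega)$ there is $\bu_q \in \bU$ with $\div\bu_q = q$ and $\|\bu_q\|_1 \le C_\Omega\|q\|$, where $C_\Omega$ depends only on the domain. Testing with $\bu = \bu_q$ gives $(\div\bu_q, q) = \|q\|^2$, so the quotient is at least $1/C_\Omega$ uniformly in $q$, and the claim follows with $\beta_s = C_\Omega^{-1}$.

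The only points needing care are the zero-mean bookkeeping that guarantees $\sum_{i} q_i \in L^2_0(\Omega)$ and the surjectivity of the summation map; both are elementary, so there is no genuine analytic obstacle beyond the classical divergence-surjectivity result, which I would cite (e.g.\ from Boffi--Brezzi--Fortin) rather than reprove. Worth emphasising for the purposes of this paper is that the resulting $\beta_s$ is precisely the classical Stokes constant of $\Omega$ and is therefore independent of the network scale $n$ and of all model and discretization parameters, exactly as the parameter-robust programme requires.
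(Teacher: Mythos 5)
Your proof is correct and takes essentially the same route as the paper, which states this lemma as a well-known result and simply cites the classical Stokes/LBB theory (Brezzi; Boffi--Brezzi--Fortin): your reduction of the pressure tuple to the single scalar $q=\sum_{i=1}^n q_i$ via surjectivity of the summation map, followed by the bounded right inverse of $\operatorname{div}:H^1_0(\Omega)^d\to L^2_0(\Omega)$, is precisely the implicit content of that citation. The observation that the resulting $\beta_s$ is independent of $n$ matches the paper's intent exactly, so nothing further is needed.
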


\noindent
Furthermore, we summarize some useful properties of the matrix $\C$ in the following lemma.
\begin{lemma} \label{etauu}
Let $\tilde{\C}= \C_3 + \C_4, \tilde{\C}^{-1}=(\tilde b_{ij})_{n\times n}$, then $\tilde{\C}$ is SPD and for any n-dimensional vector $\bm x$, we have
\begin{align}
&(\C {\boldsymbol x},{\boldsymbol x}) \geq (\tilde{\C }{\boldsymbol x},{\boldsymbol x})\geq (\C_3 {\boldsymbol x},{\boldsymbol x}),\label{etauu:1}
\\&(\C^{-1} {\boldsymbol x},{\boldsymbol x}) \leq (\tilde{\C}^{-1}{\boldsymbol x},{\boldsymbol x})\leq (\C_3^{-1}{\boldsymbol x},{\boldsymbol x})=R^{-1}(\bm x, \bm x).\label{etauu:2}
\end{align}
Also,
{
\begin{align}
&0<\sum_{\substack{i=1}}^n\sum_{\substack{j=1}}^n \tilde b_{ij}\le \lambda_0.\label{etauu:3}
\end{align}
}
\end{lemma}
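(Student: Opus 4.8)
The plan is to read all three assertions through the lens of the Loewner (positive semidefinite) ordering and to reduce everything to the elementary properties of the four building blocks $\C_1,\dots,\C_4$ already recorded above. Recall that $\C_1,\C_2,\C_4$ are SPSD while $\C_3=R\,\bm I$ is SPD (since $R>0$), and write $\bm e=(1,\dots,1)^T$ so that $\C_4=\lambda_0^{-1}\bm e\bm e^T$. Since $\tilde\C=\C_3+\C_4$ is the sum of an SPD and an SPSD matrix, it is itself SPD; this disposes of the first assertion.

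For the two-sided bound \eqref{etauu:1} I would split $\C=\tilde\C+(\C_1+\C_2)$ and $\tilde\C=\C_3+\C_4$ and test against $\bm x$: additivity of quadratic forms gives $(\C\bm x,\bm x)-(\tilde\C\bm x,\bm x)=((\C_1+\C_2)\bm x,\bm x)\ge0$ and $(\tilde\C\bm x,\bm x)-(\C_3\bm x,\bm x)=(\C_4\bm x,\bm x)\ge0$, which is exactly \eqref{etauu:1}. In Loewner notation this reads $\C_3\preceq\tilde\C\preceq\C$, with all three matrices SPD. The bound \eqref{etauu:2} then follows from \eqref{etauu:1} by the antitonicity of matrix inversion on the cone of SPD matrices: if $0\prec A\preceq B$ then $B^{-1}\preceq A^{-1}$. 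Applying this to $\C_3\preceq\tilde\C\preceq\C$ reverses the chain to $\C^{-1}\preceq\tilde\C^{-1}\preceq\C_3^{-1}$, and $\C_3^{-1}=R^{-1}\bm I$ supplies the closing identity $(\C_3^{-1}\bm x,\bm x)=R^{-1}(\bm x,\bm x)$.

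It remains to prove \eqref{etauu:3}, which is the only part carrying genuine content and the step I expect to be the main obstacle, since here the all-ones structure of $\C_4$ must be exploited rather than merely its semidefiniteness. Observing that $\sum_{i,j}\tilde b_{ij}=\bm e^T\tilde\C^{-1}\bm e=(\tilde\C^{-1}\bm e,\bm e)$, the strict lower bound is immediate from positive definiteness of $\tilde\C^{-1}$. For the upper bound I would set $\bm y=\tilde\C^{-1}\bm e$, so that $\tilde\C\bm y=\bm e$ and hence $(\tilde\C\bm y,\bm y)=\bm e^T\bm y=\bm e^T\tilde\C^{-1}\bm e$. Discarding the nonnegative $\C_3$-part and using $(\C_4\bm y,\bm y)=\lambda_0^{-1}(\bm e^T\bm y)^2$ yields $\bm e^T\bm y=(\tilde\C\bm y,\bm y)\ge(\C_4\bm y,\bm y)=\lambda_0^{-1}(\bm e^T\bm y)^2$; dividing by $\bm e^T\bm y>0$ gives precisely $\sum_{i,j}\tilde b_{ij}=\bm e^T\bm y\le\lambda_0$.

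As a cross-check, a Sherman--Morrison computation on $\tilde\C=R\bm I+\lambda_0^{-1}\bm e\bm e^T$ produces the closed form $\sum_{i,j}\tilde b_{ij}=nR^{-1}\lambda_0/(\lambda_0+nR^{-1})$, which is manifestly strictly below $\lambda_0$ and agrees with the inequality above; I would keep the quadratic-form argument as the primary one since it avoids inverting $\tilde\C$ explicitly and makes the role of $\C_4$ transparent.
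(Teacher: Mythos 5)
Your proof is correct, and for the first two assertions it coincides with the paper's argument: the SPD property of $\tilde\C$ and the chain \eqref{etauu:1} come from additivity of quadratic forms ($\C=\tilde\C+(\C_1+\C_2)$, $\tilde\C=\C_3+\C_4$ with $\C_1,\C_2,\C_4$ SPSD), and \eqref{etauu:2} from antitonicity of inversion on the SPD cone, exactly as the paper (tacitly) does. The genuine difference is in \eqref{etauu:3}. The paper inverts $\tilde\C$ explicitly via the Sherman--Morrison--Woodbury formula, obtaining $\tilde b_{ii}=\frac1R-\frac{1}{R(R\lambda_0+n)}$ and $\tilde b_{ij}=-\frac{1}{R(R\lambda_0+n)}$ for $i\neq j$, and then sums all entries to get $\sum_{i,j}\tilde b_{ij}=\frac{n\lambda_0}{R\lambda_0+n}\le\lambda_0$. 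You instead observe that $\sum_{i,j}\tilde b_{ij}=(\tilde\C^{-1}\bm e,\bm e)$ for the all-ones vector $\bm e$, set $\bm y=\tilde\C^{-1}\bm e$, discard the nonnegative $\C_3$-part of $(\tilde\C\bm y,\bm y)$, and exploit the rank-one structure $\C_4=\lambda_0^{-1}\bm e\bm e^T$ to get $\bm e^T\bm y=(\tilde\C\bm y,\bm y)\ge\lambda_0^{-1}(\bm e^T\bm y)^2$, whence $\bm e^T\bm y\le\lambda_0$ after division by $\bm e^T\bm y>0$ (which is strict since $\tilde\C^{-1}$ is SPD and $\bm e\neq\bm 0$, also settling the lower bound). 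Your route is shorter, computation-free, and strictly more general: it uses only $\tilde\C\succeq\lambda_0^{-1}\bm e\bm e^T$ and the invertibility of $\tilde\C$, so it would survive replacing $\C_3=R\,I$ by any SPSD matrix that keeps $\tilde\C$ nonsingular. What the paper's computation buys in exchange is the exact value $\frac{n\lambda_0}{R\lambda_0+n}$, a quantitative measure of the slack in the bound; however, that closed form is not used anywhere else in the paper, so nothing downstream depends on it. Your Sherman--Morrison cross-check reproduces the paper's formula (after clearing $R^{-1}$ from numerator and denominator), confirming the two arguments agree.
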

\begin{proof}

From the definitions of $\C_3, \C_4$, noting that $\C_3$ is SPD and $\C_4$ is SPSD, it is obvious that $\tilde {\C}$ is SPD. 

From the definition of $\C$, noting that $\C_1$ and $\C_2$ are SPSD, we infer the estimates
\begin{align*}
&(\C {\boldsymbol x},{\boldsymbol x}) \geq (\tilde{\C }{\boldsymbol x},{\boldsymbol x})\geq (\C_3 {\boldsymbol x},{\boldsymbol x}), ~~(\C^{-1} {\boldsymbol x},{\boldsymbol x}) \leq (\tilde{\C}^{-1}{\boldsymbol x},{\boldsymbol x})\leq (\C_3^{-1}{\boldsymbol x},{\boldsymbol x})=R^{-1}(\bm x, \bm x).
\end{align*}
Next, we show that
$$
\sum_{\substack{i=1}}^n\sum_{\substack{j=1}}^n \tilde b_{ij}\le { \lambda_0}.
$$
From the definitions of $\C_3, \C_4$ and $\tilde {\C}$, we have 
\begin{align*}
\tilde{\C}=
\begin{bmatrix}
R+\frac{1}{\lambda_0} & \frac{1}{\lambda_0} & \dots &\frac{1}{\lambda_0}\\
\frac{1}{\lambda_0}   &\ddots             &\ddots& \vdots\\
\vdots&\ddots& \ddots & \frac{1}{\lambda}     \\
\frac{1}{\lambda_0} & \dots & \frac{1}{\lambda_0}  &R+\frac{1}{\lambda_0}
\end{bmatrix}.
\end{align*}
Now, using the Sherman-Morrison-Woodbury formula, we find
\begin{align*}
\tilde{\C}^{-1}=(\C_3 - \tilde{\bm \lambda} \bm e^T)^{-1} =
\C_3 ^{-1} + \frac{\C_3^{-1} \tilde{\bm \lambda} \bm e^T \C_3^{-1}}{1-\bm e^T \C_3^{-1}\tilde{\bm \lambda}}
\intertext{where}
\tilde{\bm \lambda}=(\underbrace{\frac{1}{\lambda_0},\dots,\frac{1}{\lambda_0}}_n)^T,\qquad \bm e=(\underbrace{-1,\dots,-1}_n)^T.
\end{align*}
Further, noting that
\begin{align*}
\C_3^{-1}&=
\begin{bmatrix}
\frac{1}{R} & 0 & \dots &0\\
0   &\ddots             &\ddots& \vdots\\
\vdots&\ddots& \ddots & 0     \\
0 & \dots & 0  &\frac{1}{R}
\end{bmatrix}
=\frac{1}{R}I_{n\times n},
\end{align*}
where $I_{n\times n}$ is the $n$-th order identity matrix, we obtain 
\begin{align*}
\C_3^{-1} \tilde{\bm \lambda} \bm e^T \C_3^{-1}&=
\left(\frac{1}{R}I_{n\times n}\right)
\begin{bmatrix}
-\frac{1}{\lambda_0} & \dots & \dots &-\frac{1}{\lambda_0}\\
\vdots   &\ddots             & & \vdots\\
\vdots& & \ddots & \vdots     \\
-\frac{1}{\lambda_0} &\dots & \dots  &-\frac{1}{\lambda_0}
\end{bmatrix}
\left(\frac{1}{R}I_{n\times n}\right)=
\begin{bmatrix}
-\frac{1}{R^2\lambda_0} & \dots & \dots &-\frac{1}{R^2\lambda_0}\\
\vdots   &\ddots             & & \vdots\\
\vdots& & \ddots & \vdots     \\
-\frac{1}{R^2\lambda_0} &\dots & \dots  &-\frac{1}{R^2\lambda_0}
\end{bmatrix}
\end{align*}
and 
\begin{align*}
\bm e^T \C_3^{-1} \tilde{\bm \lambda}&=
(-1,\dots,\dots,-1)
\begin{bmatrix}
\frac{1}{R} & 0 & \dots &0\\
0   &\ddots             &\ddots& \vdots\\
\vdots&\ddots& \ddots & 0     \\
0 & \dots & 0  &\frac{1}{R}
\end{bmatrix}
\begin{bmatrix}
\frac{1}{\lambda_0}\\
\vdots\\
\vdots\\
\frac{1}{\lambda_0}
\end{bmatrix}
=\sum_{i=1}^{n}\frac{-1}{R\lambda_0}=-\frac{n}{R\lambda_0}
\end{align*}
which implies that
$$\frac{1}{1-\bm e^T \C_3^{-1} \tilde{\bm \lambda}} = \frac{R\lambda_0}{R\lambda_0+n}.$$
Now we can calculate $\tilde{\C}^{-1}$ as follows:
\begin{align*}
\tilde{\C}^{-1}&=\C_3 ^{-1} + \frac{\C_3^{-1} \tilde{\bm \lambda} \bm e^T \C_3^{-1}}{1-\bm e^T \C_3^{-1} \tilde{\bm \lambda}} = 
\begin{bmatrix}
\frac{1}{R} & 0 & \dots &0\\
0   &\ddots             &\ddots& \vdots\\
\vdots&\ddots& \ddots & 0     \\
0 & \dots & 0  &\frac{1}{R}
\end{bmatrix}
+\frac{R\lambda_0}{R\lambda_0+n}
\begin{bmatrix}
-\frac{1}{R^2\lambda_0} & \dots & \dots &-\frac{1}{R^2\lambda_0}\\
\vdots   &\ddots             & & \vdots\\
\vdots& & \ddots & \vdots     \\
-\frac{1}{R^2\lambda_0} &\dots & \dots  &-\frac{1}{R^2\lambda_0}
\end{bmatrix}
\\&=
\begin{bmatrix}
\frac{1}{R}-\frac{1}{R(R\lambda_0+n)}& -\frac{1}{R(R\lambda_0+n)} & \dots &-\frac{1}{R(R\lambda_0+n)}\\
-\frac{1}{R(R\lambda_0+n)}   &\frac{1}{R}-\frac{1}{R(R\lambda_0+n)}             &\dots& -\frac{1}{R(R\lambda_0+n)}\\
\vdots&\vdots& \ddots &\vdots   \\
-\frac{1}{R(R\lambda_0+n)} &-\frac{1}{R(R\lambda_0+n)} &\dots&\frac{1}{R}-\frac{1}{R(R\lambda_0+n)}
\end{bmatrix}
=(\tilde b_{ij})_{n\times n}.
\end{align*}
Finally, we conclude
\begin{align*}
\sum_{i=1}^{n}\sum_{j=1}^{n}\tilde{b}_{ij}&=\frac{n}{R}-\frac{n^2}{R(R\lambda_0+n)} = \frac{nR\lambda_0+n^2-n^2}{R(R\lambda_0+n)}= \frac{n\lambda_0}{(R\lambda_0+n)}
\leq \frac{n\lambda_0}{n}
=\lambda_0.
\end{align*}
\end{proof}

We are ready to prove the uniform inf-sup condition for $\mathcal{A}((\cdot;\cdot; \cdot),(\cdot;\cdot;\cdot))$
in the norms induced by~\eqref{norms}.
\begin{theorem}\label{stability:continuous}
There exists a constant $\omega> 0$ independent of the parameters $\lambda,R_i^{-1}, \alpha_{p_i}, {\alpha}_{ij}, i,j=1,\dots,n$ and the network scale $n$, such that
\begin{align*}
\inf_{(\bu;\bv;\bp)\in \bU\times \bV\times \bP}\sup_{(\bw;\bz;\bq)\in  \bU\times \bV\times \bP}\frac{\mathcal{A}((\bu;\bv;\bp),(\bw;\bz;\bq))}{( \|\bu\|_{\bU}+\|\bv\|_{\bV}+ \|\bp\|_{\bP})( \|\bw\|_{\bU}+ \|\bz\|_{\bV}+ \|\bq\|_{\bP})} \geq \omega.
\end{align*}
\end{theorem}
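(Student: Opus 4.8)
The plan is to establish the inf-sup bound constructively, in Fortin style: for an arbitrary $(\bu;\bv;\bp)\in\bU\times\bV\times\bP$ I will exhibit a test triple $(\bw;\bz;\bq)$, assembled as a parameter-weighted sum of four elementary choices, such that $\mathcal{A}((\bu;\bv;\bp),(\bw;\bz;\bq))\ge c\,(\|\bu\|_{\bU}^2+\|\bv\|_{\bV}^2+\|\bp\|_{\bP}^2)$ while $\|\bw\|_{\bU}+\|\bz\|_{\bV}+\|\bq\|_{\bP}\le C\,(\|\bu\|_{\bU}+\|\bv\|_{\bV}+\|\bp\|_{\bP})$; the ratio $c/C$ then bounds $\omega$ from below. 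The decisive first choice is the diagonal one $(\bw;\bz;\bq)=(\bu;\bv;-\bp)$. Testing against it, the two pairs of off-diagonal coupling terms cancel exactly: $-(\sum_i p_i,\divv\bu)$ cancels $(\divv\bu,\sum_i p_i)$, and $-(\bp,\Divv\bv)$ cancels $(\Divv\bv,\bp)$, leaving $\mathcal{A}=\|\bu\|_{\bU}^2+\sum_i(R_i^{-1}\bv_i,\bv_i)+((\C_1+\C_2)\bp,\bp)$. This already controls $\|\bu\|_{\bU}^2$, the zeroth-order flux part of $\|\bv\|_{\bV}^2$, and the $\C_1+\C_2$ part of $\|\bp\|_{\bP}^2=(\C\bp,\bp)$, so the remaining task is to recover the three missing pieces $(\C_3\bp,\bp)=R\sum_i\|p_i\|^2$, $(\C_4\bp,\bp)=\tfrac{1}{\lambda_0}\|\sum_i p_i\|^2$, and the divergence part $(\C^{-1}\Divv\bv,\Divv\bv)$ of $\|\bv\|_{\bV}^2$.

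To recover $(\C_3\bp,\bp)$ I would invoke Lemma~\ref{divinf-sup}: for each $i$ choose $\hat\bz_i\in\bV_i$ with $-(p_i,\divv\hat\bz_i)=\|p_i\|^2$ and $\|\hat\bz_i\|_{\div}\le\beta_v^{-1}\|p_i\|$, and set $\bz^p=R(\hat\bz_1,\dots,\hat\bz_n)$. Testing $(\bo;\bz^p;\bo)$ produces the term $-(\bp,\Divv\bz^p)=R\sum_i\|p_i\|^2=(\C_3\bp,\bp)$, and because $R_i^{-1}\le R^{-1}$ together with $\C^{-1}\le R^{-1}I$ from~\eqref{etauu:2}, one checks $\|\bz^p\|_{\bV}^2\lesssim R\sum_i\|p_i\|^2\le\|\bp\|_{\bP}^2$. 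To recover $(\C_4\bp,\bp)$ I would apply Lemma~\ref{stokesinf-sup} to $s:=\sum_i p_i\in L_0^2(\Omega)$: pick $\tilde\bw$ with $-(s,\divv\tilde\bw)=\|s\|^2$, $\|\tilde\bw\|_1\le\beta_s^{-1}\|s\|$, and set $\bw^p=\tfrac{1}{\lambda_0}\tilde\bw$, so that $-(\sum_i p_i,\divv\bw^p)=\tfrac{1}{\lambda_0}\|s\|^2=(\C_4\bp,\bp)$; using $\lambda\le\lambda_0$ and $\lambda_0\ge1$ gives $\|\bw^p\|_{\bU}^2\lesssim\tfrac{1}{\lambda_0}\|s\|^2\le\|\bp\|_{\bP}^2$. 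Finally, the flux-divergence piece is obtained from $\bq^v=-\C^{-1}\Divv\bv$, which lies in $\bP$ because each $\divv\bv_i$ has zero mean and $\C^{-1}$ has constant entries: testing $(\bo;\bo;\bq^v)$ yields $-(\Divv\bv,\bq^v)=(\C^{-1}\Divv\bv,\Divv\bv)$ with $\|\bq^v\|_{\bP}^2=(\C^{-1}\Divv\bv,\Divv\bv)\le\|\bv\|_{\bV}^2$.

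With these auxiliaries I would set $(\bw;\bz;\bq)=(\bu;\bv;-\bp)+\delta_2(\bo;\bz^p;\bo)+\delta_3(\bw^p;\bo;\bo)+\delta_4(\bo;\bo;\bq^v)$ with small fixed weights $\delta_2,\delta_3,\delta_4>0$. Each auxiliary contributes its targeted positive term plus cross terms, namely $\sum_i(R_i^{-1}\bv_i,\bz_i^p)$, the elasticity pairing $(\ep(\bu),\ep(\bw^p))+\lambda(\divv\bu,\divv\bw^p)$, and $-(\divv\bu,\sum_i q_i^v)-((\C_1+\C_2)\bp,\bq^v)$. All of these I would estimate by Cauchy--Schwarz and Young's inequality against quantities already controlled by the diagonal term, first fixing the Young parameters so that the self-consumption of each targeted term stays below its own coefficient, then shrinking the $\delta$'s so the spillover onto $\|\bu\|_{\bU}^2$, $\sum_i(R_i^{-1}\bv_i,\bv_i)$ and $((\C_1+\C_2)\bp,\bp)$ stays below the available constant $1$. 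Here the bounds $\C^{-1}\le R^{-1}I$, $\tfrac{1}{\lambda_0}\|\sum_i q_i^v\|^2\le(\C\bq^v,\bq^v)$, $(\C_1+\C_2)\le\C$, and (for the term $\lambda_0\|\divv\bu\|^2$ when $\lambda<1$) Korn's inequality keep every constant independent of $n$ and of all model parameters. This yields the coercivity estimate, while the triangle inequality and the norm bounds on $\bz^p,\bw^p,\bq^v$ give the required upper bound on the test-function norm.

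I expect the genuine difficulty to lie in the uniformity with respect to $\lambda$ and to the network scale $n$, rather than in the saddle-point mechanics. The $\C_4$ contribution is intrinsically global in the pressures, seeing only $\sum_i p_i$, so it must be captured through the single Stokes estimate of Lemma~\ref{stokesinf-sup} with the $\tfrac{1}{\lambda_0}$ weighting, and matching $\|\bw^p\|_{\bU}$ to $(\C_4\bp,\bp)$ relies on $\lambda\le\lambda_0$ and $\lambda_0\ge1$ holding at once. The $n$-independence rests entirely on Lemma~\ref{etauu}, in particular on $\C^{-1}\le R^{-1}I$ and $\sum_{i,j}\tilde b_{ij}\le\lambda_0$, which prevent the constants from degrading as the number of networks grows; verifying that the cross-term absorption can be carried out with an overall constant and with $\delta$'s that do not depend on $n$ is the step I would scrutinize most carefully.
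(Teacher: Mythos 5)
Your proposal is correct and follows essentially the same route as the paper's proof: your test triple $(\bu;\bv;-\bp)+\delta_2(\bo;\bz^p;\bo)+\delta_3(\bw^p;\bo;\bo)+\delta_4(\bo;\bo;\bq^v)$ is, up to an overall rescaling, exactly the paper's choice $\bw=\delta \bu-\tfrac{1}{\sqrt{\lambda_0}}\bu_0$, $\bz_i=\delta \bv_i-\sqrt{R}\bm{\psi}_i$, $\bq=-\delta\bp-\C^{-1}\Divv\bv$, built from the same inf-sup Lemmas~\ref{divinf-sup} and~\ref{stokesinf-sup} and with the cross terms absorbed by Young's inequality using the matrix bounds of Lemma~\ref{etauu}. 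The only (harmless) deviation is that you control the cross term $(\divv\bu,\sum_i q_i^v)$ via $(\C_4\bq^v,\bq^v)\le(\C\bq^v,\bq^v)$ rather than through the explicit bound \eqref{etauu:3} as used in the paper's estimate \eqref{lemma:etauu}, which is an equally valid, slightly more economical use of the same lemma.
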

\begin{proof}
For any $(\bu;\bv; \bm p)=(\bu;\bv_1,\dots,\bv_n;p_1,\dots,p_n)\in \bU\times \bV_1\times\dots\times \bV_n\times P_1\times\dots\times P_n$, by Lemma \ref{divinf-sup}, there exist
\begin{align}
\bm \psi_i &\in \bm V_i ~~\hbox{such that}~~\divv\bm \psi_i = \sqrt{R}p_i ~~\hbox{and}~~ \|\bm \psi_i\|_{\divn} \leq { \beta_{d}^{-1}\sqrt{R}\|p_i\|} \label{eq:v3},~~ i=1,\dots,n;
\end{align}
and by Lemma \ref{stokesinf-sup}, there exists
\begin{align}\label{eq:u3}
\bu_0 &\in \bU~~\hbox{such that}~~ 
\divv\bu_0 =\frac{1}{\sqrt{\lambda_0}} (\sum_{i=1}^{n}p_i),~~\|\bu_0\|_1 \leq \beta_{s}^{-1}\frac{1}{\sqrt{\lambda_0}}\|\sum_{i=1}^{n}p_i\|.
\end{align}
Choose
\begin{align}\label{choice}
\bw = \delta \bu - \frac{1}{\sqrt{\lambda_0}}\bu_0,~~
\bz_i= \delta \bv_i  -\sqrt{R}\bm \psi_i,~i=1,\dots,n,~~
\bq = -\delta \bp - \C^{-1} \Divv \bv,
\end{align}
where $\delta$ is a positive constant to be determined later.\\

Now let us verify the boundedness of $(\bw;\bz;\bm q)$ by $(\bu;\bv;\bm p)$ in the combined norm.
Let $\bm \psi^T=(\bm \psi_1^T, \dots, \bm \psi_n^T)$, then $\bm z= \delta \bv  -\sqrt{R}\bm \psi$.

Firstly, by \eqref{eq:u3}, we have 
\begin{align*}
(\frac{1}{\sqrt{\lambda_0}}\bu_0, \frac{1}{\sqrt{\lambda_0}}\bu_0)_{\bU} &= (\ep(\frac{1}{\sqrt{\lambda_0}}\bu_0),\ep(\frac{1}{\sqrt{\lambda_0}}\bu_0)) +  \lambda(\divv(\frac{1}{\sqrt{\lambda_0}}\bu_0),\divv(\frac{1}{\sqrt{\lambda_0}}\bu_0)) 
\\&\leq \frac{1}{\lambda_0}(\ep(\bu_0),\ep(\bu_0)) +(\divv\bu_0 ,\divv\bu_0)
\leq \frac{1}{\lambda_0}(\ep(\bu_0),\ep(\bu_0)) +\frac{1}{\lambda_0}(\sum_{i=1}^{n}p_i,\sum_{i=1}^{n}p_i) 
\\&\leq \frac{1}{\lambda_0}\beta_{s}^{-2}\frac{1}{\lambda_0}\|\sum_{i=1}^{n}p_i\|^2+ \frac{1}{\lambda_0}\|\sum_{i=1}^{n}p_i\|^2
\leq \frac{1}{\lambda_0}(\beta_{s}^{-2}\frac{1}{\lambda_0}+ 1)\|\sum_{i=1}^{n}p_i\|^2 
\leq \frac{1}{\lambda_0}(\beta_{s}^{-2}+ 1)\|\sum_{i=1}^{n}p_i\|^2 
\\&\leq \frac{1}{\lambda_0}(\beta_{s}^{-2}+ 1)\|\sum_{i=1}^{n}p_i\|^2=(\beta_{s}^{-2}+ 1)(\C_4\bp,\bp) \leq(\beta_{s}^{-2}+ 1)\|\bp\|^2_{\bP},
\end{align*}
which implies that
\begin{equation}\label{bound:u}
\|\bw\|_{\bU} \leq  \delta \|\bu\|_{\bU} +\sqrt{(\beta_{s}^{-2}+ 1)}\|\bp\|_{\bP}.
\end{equation}
Secondly, by \eqref{etauu:2} and \eqref{eq:v3}, we have
\begin{align*}
(\sqrt{R}\bm \psi,\sqrt{R}\bm \psi)_{\bV} &= \sum_{i=1}^n(R_i^{-1}\sqrt{R}\bm \psi_i,\sqrt{R}\bm \psi_i)+(\C^{-1} \Divv(\sqrt{R}\bm \psi),\Divv(\sqrt{R}\bm \psi))
\\&\leq R\sum_{i=1}^n(R_i^{-1}\bm \psi_i,\bm \psi_i) + R^{-1} (\Divv(\sqrt{R}\bm \psi),\Divv(\sqrt{R}\bm \psi))
\leq \sum_{i=1}^n(\bm \psi_i,\bm \psi_i) +(\Divv\bm \psi,\Divv\bm \psi) 
\\&= \sum_{i=1}^{n}\|\bm \psi_i\|^2+ \sum_{i=1}^{n}(\divv\bm \psi_i,\divv\bm \psi_i)
= \sum_{i=1}^{n} \|\bm \psi_i\|^2_{\divn}\leq \sum_{i=1}^{n} \beta_{d}^{-2}R\|p_i\|^2 = \beta_{d}^{-2}R\|\bp\|^2 \leq\beta_{d}^{-2}\|\bp\|^2_{\bP},
\end{align*}
which implies that
\begin{equation}\label{bound:z}
\|\bz\|_{\bV} \leq \delta \|\bv\|_{\bV}+ \beta^{-1}_{d}\|\bp\|_{\bP}.
\end{equation}
Thirdly, there holds
\begin{equation}\label{bound:q}
\|\bq\|_{\bP} \leq \delta \|\bp\|_{\bP} + \|\bv\|_{\bV}
\end{equation}
since $(\C^{-1} \Divv\bv,\C^{-1} \Divv\bv)_{\bP}  =(\Divv\bv,\C^{-1} \Divv\bv) \leq (\bv,\bv)_{\bV}$.

Collecting the estimates \eqref{bound:u}, \eqref{bound:z} and \eqref{bound:q}, we obtain
$$
\|\bw\|_{\bU} + \|\bz\|_{\bV} + \|\bq\|_{\bP} \leq (\delta +1+ { \beta^{-1}_{d}}+
\beta_{s}^{-1} )\big( \|\bu\|_{\bU} + \|\bv\|_{\bV} + \|\bp\|_{\bP}\big)
$$
and hence the desired boundedness estimate.\\

Next, we show the coercivity of $\mathcal{A}((\bu; \bv;\bm p),(\bw; \bz; \bm q))$. Using the definition of $\mathcal{A}((\bu; \bv;\bm p),(\bw; \bz; \bm q))$ and that of $(\bw; \bz; \bm q)$ from \eqref{choice}, we find
\begin{align*}
  & \mathcal A((\boldsymbol u; \boldsymbol v; \bm p), (\boldsymbol w;\boldsymbol z;\bm q))
   \\=& (\ep(\bu),\ep(\bw)) + \lambda (\divv\bu,\divv\bw)-(\sum_{i=1}^{n}p_i,\divv \bw)
 \\&+\sum_{i=1}^n(R_i^{-1} \bv_i,\bz_i) - (\bp,\Divv \bz)-(\divv\bu,\sum_{i=1}^{n} q_i)
 - (\Divv \bv, \bq)- ((\C_1+\C_2)\bp,\bq)
 \\=&(\ep(\bu),\ep(\delta \bu - \frac{1}{\sqrt{\lambda_0}}\bu_0)) + \lambda (\divv\bu,\divv(\delta \bu - \frac{1}{\sqrt{\lambda_0}}\bu_0)) 
 -( \sum_{i=1}^{n}p_i, \divv (\delta \bu - \frac{1}{\sqrt{\lambda_0}}\bu_0))
\\&+ \sum_{i=1}^n(R_i^{-1}\bv_i,(\delta \bv_i-\sqrt{R}\bm \psi_i))-(\Divv(\delta \bv  - \sqrt{R}\bm \psi),\bp) 
-((\underbrace{\divv\bu,\dots,\divv\bu}_n)^T,-\delta \bp - \C^{-1}  \Divv \bv) 
\\ &-(\Divv\bv,-\delta \bp - \C^{-1}  \Divv \bv) - ((\C_1+\C_2)\bp,(-\delta \bp - \C^{-1} \Divv \bv)).
\end{align*}
Using \eqref{eq:v3} and \eqref{eq:u3}, we therefore get
\begin{align*}
& \mathcal A((\boldsymbol u; \boldsymbol v; \bm p), (\boldsymbol w;\boldsymbol z;\bm q))\\
=&\delta(\ep(\bu),\ep(\bu)) - \frac{1}{\sqrt{\lambda_0}}(\ep(\bu),\ep(\bu_0)) + \delta\lambda (\divv\bu,\divv \bu)-\frac{\lambda}{\sqrt{\lambda_0}} (\divv\bu,\divv\bu_0)
 - \delta(\sum_{i=1}^{n}p_i, \divv \bu) \\&+\frac{1}{\sqrt{\lambda_0}}(\sum_{i=1}^{n}p_i, \divv\bu_0)
+ \delta\sum_{i=1}^n(R_i^{-1}\bv_i,\bv_i)  - \sqrt{R}\sum_{i=1}^n(R_i^{-1}\bv_i,\bm \psi_i)-\delta(\Divv\bv,\bp)  + \sqrt{R}(\Divv\bm \psi,\bp) 
\\ &+\delta((\underbrace{\divv\bu,\dots,\divv\bu}_n)^T, \bp) + (\C^{-1} (\underbrace{\divv\bu,\dots,\divv\bu}_n)^T,\Divv \bv) + \delta( \bp, \Divv\bv) \\&+ ( \C^{-1}  \Divv \bv , \Divv\bv) + \delta((\C_1+\C_2)\bp,\bp) 
+ ((\C_1+\C_2)\C^{-1} \bp, \Divv \bv)
\\=&\delta(\ep(\bu),\ep(\bu)) - \frac{1}{\sqrt{\lambda_0}}(\ep(\bu),\ep(\bu_0))  + \delta\lambda (\divv\bu,\divv \bu)-\frac{\lambda}{\lambda_0}(\divv\bu,\sum_{i=1}^{n}p_i) 
 +\frac{1}{\lambda_0}(\sum_{i=1}^{n}p_i,\sum_{i=1}^{n}p_i)
\\&+ \delta\sum_{i=1}^n(R_i^{-1} \bv_i,\bv_i)  -  \sqrt{R}\sum_{i=1}^n(R_i^{-1} \bv_i,\bm \psi_i) + R\sum_{i=1}^{n}(p_i,p_i) 
 + (\C^{-1} ((\underbrace{\divv\bu,\dots,\divv\bu}_n)^T,\Divv \bv)\\&+ (\C^{-1} \Divv \bv , \Divv\bv) 
 + \delta((\C_1+\C_2)\bp,\bp)+ ((\C_1+\C_2)\C^{-1} \bp, \Divv \bv) .
 \end{align*}
Using Young's inequality, it follows that  
\begin{align}\label{Young}
& \nonumber\mathcal A((\boldsymbol u; \boldsymbol v; \bm p), (\boldsymbol w;\boldsymbol z;\bm q))\\
&\nonumber\geq\delta(\ep(\bu),\ep(\bu)) -\frac{1}{2}\frac{1}{\sqrt{\lambda_0}}\epsilon_1(\ep(\bu),\ep(\bu)) - \frac{1}{2}\frac{1}{\sqrt{\lambda_0}}\epsilon^{-1}_1(\ep(\bu_0),\ep(\bu_0))+ \delta\lambda (\divv\bu,\divv \bu)- \lambda(\divv\bu,\divv\bu) 
\\&\nonumber-\frac{\lambda}{4\lambda^2_0}(\sum_{i=1}^{n}p_i,\sum_{i=1}^{n}p_i)+\frac{1}{\lambda_0}(\sum_{i=1}^{n}p_i, \sum_{i=1}^{n}p_i)
+ \delta\sum_{i=1}^n(R^{-1}_i \bv_i,\bv_i) - \frac{1}{2}\epsilon_2\sum_{i=1}^n(R^{-1}_i \bv_i,\bv_i)-\frac{1}{2}\epsilon_2^{-1}R\sum_{i=1}^n(R^{-1}_i \bm \psi_i,\bm \psi_i)
\\&\nonumber + R\sum_{i=1}^{n}(p_i,p_i)
- (\C^{-1} (\underbrace{\divv\bu,\dots,\divv\bu}_n)^T,(\underbrace{\divv\bu,\dots,\divv\bu}_n)^T)- \frac{1}{4}(\C^{-1} \Divv \bv,\Divv \bv) 
\\&+ (\C^{-1} \Divv \bv , \Divv\bv) 
+ \delta((\C_1+\C_2)\bp,\bp)-\frac{1}{4}((\C_1+\C_2)\C^{-1} \Divv \bv,\C^{-1} \Divv \bv)-((\C_1+\C_2)\bp,\bp).
\end{align}
From the definition of $\C$ and noting that both $\C_3$ and $\C_4$ are SPSD, we conclude
\begin{align}\label{eta:SSPD}
&\nonumber(\C^{-1} \Divv \bv,\Divv \bv)-((\C_1+\C_2)\C^{-1} \Divv \bv,\C^{-1}\Divv \bv)
\\&\nonumber=(\C^{-1}  \Divv \bv,\C \C^{-1} \Divv \bv)-(\C^{-1}  \Divv \bv,(\C_1+\C_2)\C^{-1}  \Divv \bv)
\\&=(\C^{-1}  \Divv \bv,(\C_3+\C_4)\C^{-1}  \Divv \bv) \geq 0.
\end{align}
Furthermore, by \eqref{etauu:3} from Lemma~\ref{etauu}, we have that
\begin{align}\label{lemma:etauu}
(\C^{-1} (\underbrace{\divv\bu,\dots,\divv\bu}_n)^T,(\underbrace{\divv\bu,\dots,\divv\bu}_n)^T)&=\big(\sum_{i=1}^{n}\sum_{j=1}^{n}\tilde{b}_{ij}\big)(\divv\bu,\divv\bu)\leq \lambda_0 (\divv\bu,\divv\bu).
\end{align}
Collecting \eqref{Young}, \eqref{eta:SSPD}, \eqref{lemma:etauu}, the estimates from \eqref{eq:v3} and \eqref{eq:u3}, and noting that $\lambda_0=\max\{\lambda,1\}$, the proof continues as follows:
\begin{align*}
&\mathcal A((\boldsymbol u; \boldsymbol v; \bm p), (\boldsymbol w;\boldsymbol z;\bm q))\\
&\geq(\delta- \frac{1}{2}\frac{1}{\sqrt{\lambda_0}}\epsilon_1)(\ep(\bu),\ep(\bu))
- \frac{1}{2}\frac{1}{\sqrt{\lambda_0}}\epsilon^{-1}_1 \beta^{-2}_{s}\frac{1}{\lambda_0}( \sum_{i=1}^{n}p_i,
\sum_{i=1}^{n}p_i)+ (\delta-1) \lambda(\divv\bu,\divv\bu) \\
&+\frac{3}{4\lambda_0}( \sum_{i=1}^{n}p_i,\sum_{i=1}^{n}p_i)
+ (\delta- \frac{1}{2}\epsilon_2)\sum_{i=1}^n(R_i^{-1}\bv_i,\bv_i)-\frac{1}{2}\epsilon_2^{-1}\sum_{i=1}^n(\bm \psi_i,\bm \psi_i)+ R\sum_{i=1}^{n}(p_i,p_i)\\
& -  (\lambda_0-\lambda+\lambda)(\divv\bu,\divv\bu)+ \frac{1}{2}( \C^{-1}  \Divv \bv , \Divv\bv) + (\delta-1)((\C_1+\C_2)\bp,\bp).
\end{align*}
Now, let $\epsilon_1 := 2\beta_{s}^{-2} , \epsilon_2 := 2{ \beta_{d}^{-2}}$, and note that $\lambda_0=\max\{\lambda,1\}$
and $({\rm\div} \bm u,{\rm\div} \bm u)\leq (\ep(\bu),\ep(\bu))$. Then we obtain
\begin{align*}
&\mathcal A((\boldsymbol u; \boldsymbol v; \bm p), (\boldsymbol w;\boldsymbol z;\bm q))\\
&\geq(\delta- \beta_{s}^{-2}-1)(\ep(\bu),\ep(\bu)) - \frac{1}{4\lambda_0}( \sum_{i=1}^{n}p_i, \sum_{i=1}^{n}p_i)+ (\delta-2) \lambda(\divv\bu,\divv\bu) 
+\frac{3}{4\lambda_0}( \sum_{i=1}^{n}p_i,\sum_{i=1}^{n}p_i)
\\&+ (\delta-\beta_{d}^{-2})\sum_{i=1}^n(R_i^{-1}\bv_i,\bv_i)-\frac{1}{4} R\sum_{i=1}^{n}(p_i,p_i)
+ R\sum_{i=1}^{n}(p_i,p_i)+ \frac{1}{2}( \C^{-1}  \Divv \bv , \Divv\bv) + (\delta-1)((\C_1+\C_2)\bp,\bp),
\end{align*}
or, equivalently,
\begin{align*}
&\mathcal A((\boldsymbol u; \boldsymbol v; \bm p), (\boldsymbol w;\boldsymbol z;\bm q))\\
&\ge (\delta - \beta_{s}^{-2}-1) (\ep(\bu),\ep(\bu)) + (\delta-2)\lambda (\divv\bu,\divv \bu) 
+\frac{1}{2}(\C_4\bp,\bp) \\
&+  (\delta-\beta_{d}^{-2})\sum_{i=1}^n(R_i^{-1}\bv_i,\bv_i)+ \frac{3}{4}(\C_3\bp,\bp)
+ \frac{1}{2}(\C^{-1} \Divv \bv , \Divv\bv) + (\delta-1)((\C_1+\C_2)\bp,\bp).
\end{align*}
Finally, let
$\delta := \max\left\lbrace \beta_{s}^{-2} + \frac{1}{2} +1,\beta_{d}^{-2}+ \frac{1}{2},2+ \frac{1}{2} \right\rbrace$.
Then, using the definition of $\C$, we get the desired coercivity estimate
\begin{align*}
&\mathcal A((\boldsymbol u; \boldsymbol v; \bm p), (\boldsymbol w;\boldsymbol z;\bm q))\\
&=(\delta - \beta_{s}^{-2}-1) (\ep(\bu),\ep(\bu)) + (\delta-2)\lambda (\divv\bu,\divv \bu)
+(\delta-{ \beta_{d}^{-2}})\sum_{i=1}^n(R_i^{-1}\bv_i,\bv_i) \\
&+ \frac{1}{2}(\C^{-1} \Divv \bv , \Divv\bv) + \big(((\delta-1)(\C_1+\C_2)+\frac{3}{4}\C_3+\frac{1}{2}\C_4)\bp,\bp\big)\\
&\geq \frac{1}{2}\big(\|\bu\|^2_{\bU}+\|\bv\|^2_{\bV}+\|\bp\|^2_{\bP} \big).
\end{align*}
\end{proof}
The above theorem implies the following stability result.
\begin{corollary}\label{eq:67}
Let $(\boldsymbol u; \boldsymbol v;\bp)\in \boldsymbol U\times \boldsymbol V\times P$ be the solution
of~\eqref{eq:8}. Then there holds the estimate
\begin{equation}
\|\boldsymbol u\|_{\boldsymbol U}+\|\boldsymbol  v\|_{\boldsymbol V}+\|p\|_P\leq C_1 (\|\boldsymbol f\|_{\boldsymbol U^*}
+\|\bm g\|_{\bm P^*}),
\end{equation} 
for some positive constant $C_1$ that is independent of the parameters
$\lambda,R_i^{-1}, \alpha_{p_i}, {\alpha}_{ij}, i,j=1,\dots,n$ and the network scale $n$, 
where
$$\|\boldsymbol f\|_{\boldsymbol U^*}=
\sup\limits_{\boldsymbol w\in \boldsymbol U}\frac{(\boldsymbol f, \boldsymbol w)}{\|\boldsymbol w\|_{\boldsymbol U}},
\quad \|\bm g\|_{\bm P^*}=
\sup\limits_{\bm q\in \bm P}\frac{(\bm g,\bm q)}{\|\bm q\|_{\bm P}}=\|\C^{-\frac{1}{2}} \bm g\|, \quad \bm g^T=(g_1,\cdots,g_n).$$
\end{corollary}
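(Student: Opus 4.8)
The plan is to read off the a-priori bound from the Banach--Ne\v{c}as--Babu\v{s}ka framework, combining the boundedness established in Theorem~\ref{bound} with the parameter-robust inf-sup stability of Theorem~\ref{stability:continuous}. First I would rewrite the weak problem~\eqref{eq:8} compactly as: find $(\bu;\bv;\bp)$ such that $\mathcal{A}((\bu;\bv;\bp),(\bw;\bz;\bq)) = F(\bw;\bz;\bq)$ for all test functions, where the load functional is $F(\bw;\bz;\bq) := (\bf,\bw) + (\bm g,\bq)$. This collects the right-hand sides of \eqref{eq:8a} and \eqref{eq:8c}; note that \eqref{eq:8b} has zero right-hand side, so the flux test function $\bz$ does not enter $F$. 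Controlling the dual norm of $F$ is the object of the proof.

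Next I would bound $F$ in terms of the combined solution norm. Using the dual norms stated in the corollary, one has $(\bf,\bw)\le\|\bf\|_{\bU^*}\|\bw\|_{\bU}$ and $(\bm g,\bq)\le\|\bm g\|_{\bP^*}\|\bq\|_{\bP}$, so that $F(\bw;\bz;\bq)\le(\|\bf\|_{\bU^*}+\|\bm g\|_{\bP^*})(\|\bw\|_{\bU}+\|\bz\|_{\bV}+\|\bq\|_{\bP})$. Along the way I would verify the identity $\|\bm g\|_{\bP^*}=\|\C^{-\frac12}\bm g\|$ quoted in the statement: since $\|\bq\|_{\bP}^2=(\C\bq,\bq)$ by~\eqref{norms-p}, the substitution $\bq=\C^{-\frac12}\bm r$ gives $\|\bq\|_{\bP}=\|\bm r\|$ and $(\bm g,\bq)=(\C^{-\frac12}\bm g,\bm r)$, whence the defining supremum equals $\sup_{\bm r}(\C^{-\frac12}\bm g,\bm r)/\|\bm r\|=\|\C^{-\frac12}\bm g\|$.

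Then I would apply the inf-sup condition to the solution itself. Taking $(\bu;\bv;\bp)$ as the first argument in Theorem~\ref{stability:continuous} and using the governing equation to replace $\mathcal{A}((\bu;\bv;\bp),(\bw;\bz;\bq))$ by $F(\bw;\bz;\bq)$ inside the supremum yields
\[
\omega\,(\|\bu\|_{\bU}+\|\bv\|_{\bV}+\|\bp\|_{\bP}) \le \sup_{(\bw;\bz;\bq)} \frac{F(\bw;\bz;\bq)}{\|\bw\|_{\bU}+\|\bz\|_{\bV}+\|\bq\|_{\bP}} \le \|\bf\|_{\bU^*}+\|\bm g\|_{\bP^*}.
\]
Dividing by $\omega$ gives the assertion with $C_1=\omega^{-1}$. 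Because Theorem~\ref{stability:continuous} guarantees that $\omega$ is independent of $\lambda$, $R_i^{-1}$, $\alpha_{p_i}$, $\alpha_{ij}$ and the network scale $n$, the constant $C_1$ inherits the same parameter-robustness, exactly as claimed.

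There is no substantial obstacle here: the corollary is an immediate consequence of the two preceding theorems via the standard stability estimate. The only points demanding mild care are the correct identification of the load functional $F$ (in particular recognizing that the flux equation contributes nothing, so $\bz$ is absent) and the verification of the weighted dual-norm identity $\|\bm g\|_{\bP^*}=\|\C^{-\frac12}\bm g\|$, which relies on the matrix-weighted inner product~\eqref{norms-p} rather than a plain $L^2$ pairing.
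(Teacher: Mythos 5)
Your proof is correct and is precisely the argument the paper leaves implicit: the corollary is stated as an immediate consequence of Theorems~\ref{bound} and~\ref{stability:continuous}, and your write-up (identifying the load functional $F(\bw;\bz;\bq)=(\bf,\bw)+(\bm g,\bq)$, bounding it by the dual norms, and applying the inf-sup estimate to the solution to get $C_1=\omega^{-1}$) is exactly that standard Babu\v{s}ka-type deduction, including the correct verification of $\|\bm g\|_{\bm P^*}=\|\C^{-\frac12}\bm g\|$ via the matrix-weighted inner product~\eqref{norms-p}.
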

\begin{remark}
We want to emphasize that the parameter ranges as specified in~\eqref{parameter:range}
are indeed relevant since the variations of the model parameters are quite large in many applications.
For that reason, Theorem~\ref{bound} and Theorem~\ref{stability:continuous} are very important fundamental results that provide the
parameter-robust stability of the model~\eqref{eq:8a}--\eqref{eq:8c}. We also point out that the matrix technique plays an interesting
role for proving the uniform stability. 
\end{remark}
{
\begin{remark}
Let $\C=(\gamma_{ij})_{n\times n}, \C^{-1}=(\tilde{\gamma}_{ij})_{n\times n}$ and define
\begin{equation}\label{Preconditioner:B}
\mathcal{B}:=\left[\begin{array}{ccc}
\mathcal{B}_{\bm u}^{-1} & \bm 0 & \bm 0 \\
\bm 0 & \mathcal{B}_{\bm v}^{-1}& \bm 0 \\
\bm 0& \bm 0& \mathcal{B}_{\bm p}^{-1}  
\end{array}
\right],
\end{equation}
where 
$$
\mathcal{B}_{\bm u}=-\div \ep-\lambda \nabla \div,
$$
\begin{align*}
\mathcal{B}_{\bm v}&=  
\begin{bmatrix}
R_1^{-1}I&0  & \dots &0  \\
0 &R_2^{-1} I& \dots &0 \\
\vdots &\vdots  & \ddots & \vdots  \\
0& 0 & \dots &R_n^{-1}I
\end{bmatrix}
-
\begin{bmatrix}
\tilde{\gamma}_{11}\nabla{\rm div} &\tilde{\gamma}_{12}\nabla{\rm div} & \dots &\tilde{\gamma}_{1n}\nabla{\rm div}\\
\tilde{\gamma}_{21}\nabla{\rm div} &\tilde{\gamma}_{22}\nabla{\rm div} & \dots &\tilde{\gamma}_{2n}\nabla{\rm div}\\
\vdots &\vdots  & \ddots & \vdots  \\
\tilde{\gamma}_{n1}\nabla{\rm div} &\tilde{\gamma}_{n2}\nabla{\rm div} & \dots &\tilde{\gamma}_{nn}\nabla{\rm div}
\end{bmatrix},
\end{align*}
\begin{align*}
\mathcal{B}_{\bm p}&=
\begin{bmatrix}
\gamma_{11}I&\gamma_{12}I& \dots &\gamma_{1n}I\\
\gamma_{21}I&\gamma_{22}I& \dots &\gamma_{2n}I\\
\vdots&\vdots  &\ddots&\vdots\\
\gamma_{n1}I&\gamma_{n2}I& \dots &\gamma_{nn}I\\
\end{bmatrix}.
\end{align*}
Inferring from the theory presented in~\cite{Mardal2011preconditioning}, 
Theorems~\ref{bound} and~\ref{stability:continuous} imply that the operator~$\mathcal{B}$ defined in~\eqref{Preconditioner:B}
is a uniform norm-equivalent (canonical) block-diagonal preconditioner for the operator $\mathcal{A}$ in~\eqref{operator:A},
robust in all model and discretization parameters, i.e., $\kappa (\mathcal{B} \mathcal{A}) = \mathcal{O}(1)$.
\end{remark}
}

\section{Uniformly stable and strongly mass conservative discretizations}\label{sec:uni_stab_disc_model}

There are various discretizations that meet the requirements for the proof of full parameter-robust stability as presented
in this section. They include conforming as well as nonconforming methods. In general, if
$\bm U_h/(\sum\limits_{i=1}^nP_{i,h})$ is a Stokes-stable pair
and $\bm V_{i,h}/P_{i,h}$ satisfy the $H(\div)$ inf-sup condition for $i=1,\ldots,n$, see~\eqref{inf-sup}, then the norm
that we have proposed in Section~\ref{sec:par_rob_stab_model} allows for the proof of full parameter-robust stability
using similar arguments as in the proof of Theorem \ref{Dis:Stability}. 
{To give a few examples, the triplets $\bm U_h/\bm V_{i,h}/\bm P_{i,h}=CR_{l}/ RT_{l-1}/P_{l-1} (l=1,2)$
together with stabilization \cite{hansbo2003discontinuous, fortin1983non} results in a parameter-robust stable discretization
of the MPET model if the norms are defined as in Section \ref{sec:par_rob_stab_model}.  The same is true for the conforming
discretizations based on the spaces $P_2/{RT}_0/ P_0$ (in 2D), $P_2^{stab}/RT_0/P_0$ (in 3D), or $P_2/RT_1/P_1$. However,
the above-mentioned finite element methods do not have the property of strong mass conservation in the sense of Proposition~\ref{prop:mass_cons}
although they result in parameter-robust inf-sup stability under the norms we proposed in Section \ref{sec:par_rob_stab_model}.}

In recent years, DG methods have been developed to solve various problems
\cite{arnold1982interior,brezzi2000discontinuous,arnold2002unified, cockburn2007note,hong2012discontinuous} 
and some unified analysis for finite element including DG methods has recently been presented
in~\cite{hong2017unified,hong2018uniform}.
In this section, motivated by the works \cite{schtzau2002mixed, hong2016robust, honguniformly},
we propose discretizations of the MPET model problem~\eqref{eq:8}. These discretizations preserve the divergence
condition (namely equation~\eqref{eq:6,3}) pointwise, which results in a strong conservation of mass, see
Proposition~\ref{prop:mass_cons}. Furthermore, they are also locking-free when the Lam\'e parameter~$\lambda$
tends to $\infty$. 

\subsection{Preliminaries and notation}
{By $\mathcal{T}_h$ we denote} a shape-regular triangulation of mesh-size $h$ of
the domain $\Omega$ into triangles $\{K\}$. We further denote by
$\mathcal{E}_h^{I}$ the set of all interior edges (or faces) of  $\mathcal{T}_h$ and by
$\mathcal{E}_h^{B}$ the set of all boundary edges (or faces); we set
$\mathcal{E}_h=\mathcal{E}_h^{I}\cup \mathcal{E}_h^{B}$.

For $s\geq 1$, we define
$$
H^s(\mathcal{T}_h)=\{\phi\in L^2(\Omega), \mbox{ such that } \phi|_K\in H^s(K) \mbox{ for all } K\in \mathcal{T}_h \}.
$$


As we consider discontinuous Galerkin (DG) discretizations, we also define some trace operators.
Let $e = \partial K_1 \cap \partial K_2$ be the common boundary (interface) of two subdomains $K_1$
and $K_2$ in $\mathcal{T}_h$ , and $\bm n_1$ and $\bm n_2$ be unit normal vectors to $e$ pointing to the
exterior of $K_1$ and $K_2$, respectively. For any edge (or face) $e
\in \mathcal{E}_h^{I}$ and a scalar $q\in H^1(\mathcal{T}_h)$, vector $\bm v \in H^1(\mathcal{T}_h)^d$ and tensor
$\bm \tau \in H^1(\mathcal{T}_h)^{d\times d}$, we define the
averages
\begin{equation*}
\begin{split}
\{\bm v\} &=\frac{1}{2}(\bm v|_{\partial K_1\cap e}\cdot \bm n_1-\bm
v|_{\partial K_2\cap e}\cdot \bm n_2), \quad 
\{\bm \tau\}=\frac{1}{2}(\bm \tau|_{\partial K_1\cap
e} \bm n_1-\bm \tau|_{\partial K_2\cap e} \bm n_2),
\end{split}
\end{equation*}
and jumps
\begin{equation*}
[q]=q|_{\partial K_1\cap e}-q|_{\partial K_2\cap e},\quad
[\bm v]=\bm v|_{\partial K_1\cap e}-\bm v|_{\partial K_2\cap e},\quad
 \Lbrack\bm v\Rbrack=\bm v|_{\partial K_1\cap e}\odot \bm n_1+\bm v|_{\partial K_2\cap e}\odot \bm n_2,
\end{equation*}
where $\bm v \odot \bm n=\frac{1}{2}(\bm{v} \bm{n}^T+\bm n \bm v^T)$ is the 
symmetric part of the tensor product of $\bm v$ and $\bm n$.

When $e \in  \mathcal{E}_h^{B}$ then the above quantities are defined as
\[
\{\bm v\}=\bm v |_{e}\cdot \bm n,\quad
\{\bm \tau\}=\bm \tau|_{e}\bm n, \quad
[q]=q|_{e}, ~~ [\bm v]=\bm v|_{e}, \quad  \Lbrack\bm v\Rbrack=\bm v|_{e}\odot \bm n.
\]
If $\bm n_K$ is the outward unit normal to $\partial K$, it is easy to show that
\begin{equation}\label{eq:68}
\sum_{K\in \mathcal{T}_h}\int_{\partial K}\bm v\cdot \bm n_K q ds=\sum_{e\in \mathcal{E}_h}\int_{e}\{\bm v\} [q] ds,
\quad\mbox{for all}\quad \bm v\in H(\div ;\Omega),\quad\mbox{for all}\quad q\in H^1(\mathcal{T}_h).
\end{equation}
Also, for $\bm \tau \in H^1(\Omega)^{d\times d}$ and for all $\bm v\in H^1(\mathcal{T}_h)^d$, we have
\begin{equation}\label{eq:69}
\sum_{K\in \mathcal{T}_h}\int_{\partial K}(\bm \tau\bm n_K)\cdot  \bm vds=\sum_{e\in \mathcal{E}_h}\int_{e}\{\bm \tau\}\cdot [\bm v] ds.
\end{equation}

\noindent
The finite element spaces we consider are denoted by
\begin{eqnarray*}
\bm U_h&=&\{\bm u \in H(\div ;\Omega):\bm u|_K \in \bm U(K),~K \in \mathcal{T}_h;~ \bm u \cdot
\bm n=0~\hbox{on}~\partial \Omega\},
\\[1ex]
\bm V_{i,h}&=&\{\bm v \in H(\div ;\Omega):\bm v|_K \in \bm V_i(K),~K \in \mathcal{T}_h;~ \bm v \cdot
\bm n=0~\hbox{on}~\partial \Omega\},~~i=1,\dots,n,
\\
P_{i,h}&=&\{q \in L^2(\Omega):q|_K \in Q_i(K),~K \in \mathcal{T}_h; ~\int_{\Omega}q dx=0\},~~i=1,\dots,n.
\end{eqnarray*}
The discretizations that we analyze in the present context define the local spaces $\bm U(K)/\bm V_i(K)/Q_i(K)$
via the triplets $BDM_l(K)/$ $RT_{l-1}(K)/P_{l-1}(K)$, or
$BDFM_l(K)/RT_{l-1}(K)/P_{l-1}(K)$ for $l\ge 1$. Note that for each of these choices, the important 
condition $\div  \bm U(K)=\div  \bm V_i(K)=Q_i(K)$ is satisfied.

We recall the following basic approximation properties of these spaces: For all
$K\in \mathcal{T}_h$ and for all $\bm u \in H^s(K)^d$, there exists $\bm u_I\in
\bm U(K)$ such that
\begin{equation}\label{eq:70}
\|\bm u-\bm u_I\|_{0,K}+h_K|\bm u-\bm u_I|_{1,K}+h_K^2|\bm u-\bm
u_I|_{2,K}
\leq C h_K^s|\bm u|_{s,K}, ~2\le s \leq l+1.
\end{equation}
\subsection{DG discretization}
We note that according to the definition of $\bm U_h$, the normal component of any $\bm u\in \bm U_h$ is continuous
on the internal edges and vanishes on the boundary edges. Therefore, by splitting a vector  $\bm u\in \bm U_h$ into
its  normal and tangential components $\bm u_n$ and $\bm u_t$,
\begin{equation}\label{eq:71}
\bm u_n:=(\bm u\cdot \bm n)\bm n,\quad \bm u_t:=\bm u- \bm u_n,
\end{equation}
we have 
\begin{equation}\label{eq:72}
\mbox{for all}\quad e\in \mathcal{E}_h~~\int_e[\bm u_n]\cdot \bm \tau ds=0,\quad\mbox{for all}\quad \bm \tau\in H^1(\mathcal{T}_h)^d, \bm u\in \bm U_h,
\end{equation}
implying that
\begin{equation}\label{eq:73}
\mbox{for all}\quad e\in \mathcal{E}_h~~\int_e[\bm u]\cdot \bm \tau ds=\int_e[\bm u_t]\cdot \bm \tau ds,\quad\mbox{for all}\quad \bm  \tau\in H^1(\mathcal{T}_h)^d,\bm u \in \bm U_h.
\end{equation}
A direct computation 
shows that
\begin{equation}\label{eq:74}
\Lbrack \bm u_t\Rbrack:\Lbrack \bm w_t\Rbrack=\frac12[\bm u_t] \cdot [\bm w_t].
\end{equation}

Similar to the continuous problem, we denote 
$$
\bm v_h^T=(\boldsymbol v^T_{1,h}, \cdots \boldsymbol v^T_{n,h}), \quad \bm p_h^T=(p_{1,h},\cdots, p_{n,h}), 
\quad \bm z_h^T=(\boldsymbol z^T_{1,h}, \cdots \boldsymbol z^T_{n,h}),
$$
$$
\bm q_h^T=(q_{1,h},\cdots, q_{n,h}), \quad \bm V_h=\boldsymbol V_{1,h}\times\cdots\times\boldsymbol V_{n,h},
\quad \bm P_h= P_{1,h}\times\cdots\times P_{n,h}.
$$
With this notation at hand, the discretization of the variational problem~\eqref{eq:8} is given as follows:
Find $(\boldsymbol u_h;\bm v_h;\bm p_h, )\in \boldsymbol U_h\times \bm V_h\times \bm P_h$, such that for
any $(\boldsymbol w_h; \bm z_{h};\bm q_{h})\in \boldsymbol U_h\times\boldsymbol V_{h}\times \bm P_{h}$
\begin{subequations}\label{eq:75-77}
\begin{eqnarray}
 a_h(\bm u_h,\bm w_h) +\lambda ( \div \boldsymbol  u_h, \div \boldsymbol  w_h)- \sum_{i=1}^n(p_{i,h}, \div \boldsymbol  w_h)&=&(\boldsymbol f, \boldsymbol w_h),\label{eq:75}\\
  (R^{-1}_i\bv_{i,h},\bz_{i,h}) {-} (p_{i,h},\divv \bz_{i,h}) &=& 0,~~ i=1,\dots, n,  \label{eq:76} \\
 	 -(\divv\bu_h,q_{i,h}) - (\divv\bv_{i,h},q_{i.h})  +\tilde{\alpha}_{ii} (p_{i,h},q_{i,h}) +\sum_{\substack{{j=1}\\j\neq i}}^{n}\alpha_{ij} (p_{j,h},q_{i,h})&=& (g_i,q_{i,h}), i=1,\dots, n, \label{eq:77}
\end{eqnarray}
\end{subequations}
where
\begin{eqnarray}
a_h(\bm u,\bm w)&=&\label{78}
\sum _{K \in \mathcal{T}_h} \int_K \ep(\bm{u}) :
\ep(\bm{w}) dx-\sum_{e \in \mathcal{E}_h} \int_e \{\ep(\bm{u})\} \cdot [\bm w_t] ds\\
&&\nonumber-\sum _{e \in \mathcal{E}_h} \int_e \{\ep(\bm{w})\} \cdot [\bm u_t]ds+\sum _{e
\in \mathcal{E}_h} \int_e \eta h_e^{-1}[ \bm u_t] \cdot [\bm w_t] ds,
\end{eqnarray}
$\tilde{\alpha}_{ii}=-\alpha_{p_i}-\alpha_{ii}$, and $\eta $ is a stabilization parameter independent of parameters $\lambda,\,R_i^{-1},\, \alpha_{p_i},\, {\alpha}_{ij}$, $i,j=1,\dots,n$, the network scale $n$ and the mesh size $h$.
{
\begin{remark}
Consider the general rescaled boundary conditions 
\begin{subequations}\label{eq:Biot_BC}
\begin{eqnarray}
p_i&=& p_{i,D} \qquad \mbox{on }~~ \Gamma_{p_i,D},~~ i=1,\dots,n,\\ 
\bm v_i\cdot {\bm n} &=& q_{i,N}  \qquad \mbox{on}~~~ \Gamma_{p_i,N}, ~~i=1,\dots,n,\\ 
\u&=& {\bu}_D  \quad ~~~~\mbox{on}~~~\Gamma_{\vek{u},D}, \\ 
{(\bm \sigma -\sum_{i=1}^np_i\boldsymbol I){\vek{n}} }&=& {\bm g}_N \qquad~\,\mbox{on}~~~\Gamma_{\vek{u},N}.
\end{eqnarray}
\end{subequations}
Usually, it is assumed that the measure of $\Gamma_{\bm u, D}$ is nonzero to
guarantee the discrete Korn's inequality \cite{brenner2004korn}.

The standard way to incorporate the boundary conditions \eqref{eq:Biot_BC} is to modify the trial spaces according to the boundary conditions,
i.e., to seek the solution in the spaces
\begin{eqnarray*}
\bm U_h^D&=&\{\bm u \in H(\div ;\Omega):\bm u|_K \in \bm U(K),~K \in \mathcal{T}_h;~ \bm u \cdot
\bm n=\bm u_{D}\cdot \bm n~\hbox{on}~\Gamma_{\vek{u},D}\},
\\[1ex]
\bm V_{i,h}^D&=&\{\bm v \in H(\div ;\Omega):\bm v|_K \in \bm V_i(K),~K \in \mathcal{T}_h;~ \bm v \cdot
\bm n=q_{i,N}~\hbox{on}~\Gamma_{p_i,N}\},  i=1,\dots,n,\\
P_{i,h}&=&
\left\{
\begin{array}{l}
\{q \in L^2(\Omega):q|_K \in Q_i(K),~K \in \mathcal{T}_h,~~\hbox{if}~~|\Gamma_{p_i,D}| \neq 0\},\\
\{q \in L^2_0(\Omega):q|_K \in Q_i(K),~K \in \mathcal{T}_h,~~\hbox{if}~~ \Gamma_{p_i,N}=\Gamma \},
\end{array}
\right.  
i=1,\dots,n,
\end{eqnarray*}
and use the test spaces given by
\begin{eqnarray*}
\bm U_h^0&=&\{\bm u \in H(\div ;\Omega):\bm u|_K \in \bm U(K),~K \in \mathcal{T}_h;~ \bm u \cdot
\bm n=0~\hbox{on}~\Gamma_{\vek{u},D}\},
\\[1ex]
\bm V_{i,h}^0&=&\{\bm v \in H(\div ;\Omega):\bm v|_K \in \bm V_i(K),~K \in \mathcal{T}_h;~ \bm v \cdot
\bm n=0~\hbox{on}~\Gamma_{p_i,N}\},~i=1,\dots,n.
\end{eqnarray*}
Again denote $\boldsymbol V_h^D=\bm V_{1,h}^D\times\cdots\times\bm V_{n,h}^D$,
$\bm P_h=P_{1,h}\times\cdots\times P_{n,h}$,
$\boldsymbol V_h^0=\bm V_{1,h}^0\times\cdots\times\bm V_{n,h}^0$. 

Hence, problem~\eqref{eq:75-77} has the more general formulation:
Find $(\boldsymbol u_h; \boldsymbol v_h; \bm p_h)\in \boldsymbol U_h^D\times\boldsymbol V_h^D\times \bm P_h$, such that for
any $(\boldsymbol w_h; \boldsymbol z_h; \bm q_h)\in \boldsymbol U_h^0\times\boldsymbol V_h^0\times \bm P_h$
\begin{subequations}\label{general:boundary}
\begin{eqnarray}
 a_h(\bm u_h,\bm w_h) +\lambda ( \div \boldsymbol  u_h, \div \boldsymbol  w_h)- \sum_{i=1}^n(p_{i,h}, \div \boldsymbol  w_h)&=&F(
 \bm w_h),\\
  (R^{-1}_i\bv_{i,h},\bz_{i,h}) {-} (p_{i,h},{\rm div} \bz_{i,h}) =(p_{i,D},\bm z_{i,h}\cdot\bm n)_{\Gamma_{p_i,D}},&& i=1,\dots, n, \\
-({\rm div} \bu_h,q_{i,h}) - ({\rm div} \bv_{i,h},q_{i.h})  +\tilde{\alpha}_{ii} (p_{i,h},q_{i,h}) +\sum_{\substack{{j=1}\\j\neq i}}^{n}\alpha_{ij} (p_{j,h},q_{i,h})&=& (g_i,q_{i,h}), i=1,\dots, n, 
\end{eqnarray}
\end{subequations}
where
\begin{eqnarray}
a_h(\bm u,\bm w)&=&
\sum _{K \in \mathcal{T}_h} \int_K \ep(\bm{u}) :
\ep(\bm{w}) dx-\sum_{e \in \mathcal{E}_h^I\cup \mathcal{E}_h^{\bm u, D}} \int_e \{\ep(\bm{u})\} \cdot [\bm w_t] ds\\
&&\nonumber-\sum _{e \in \mathcal{E}_h^I\cup\mathcal{E}_h^{\bm u, D}} \int_e \{\ep(\bm{w})\} \cdot [\bm u_t]ds+\sum _{e
\in \mathcal{E}_h^I\cup \mathcal{E}_h^{\bm u, D}} \int_e \eta h_e^{-1}[ \bm u_t] \cdot [\bm w_t] ds,\\
F(\bm w)&=&(\bm f, \bm w)+({\bm g_N}, \bm w)_{\Gamma_{\vek{u},N}}-(\bm u_{D,t}, \ep(\bm{w})\bm n)_{\Gamma_{\vek{u},D}}+\sum _{e
\in \mathcal{E}_h^{\bm u, D}} \int_e \eta h_e^{-1} \bm u_{D,t} \cdot \bm w_t ds,
\end{eqnarray}
and $\bm u_{D,t}= \bm u_{D}- (\bm u_{D}\cdot \bm n) \bm n$, $\mathcal{E}_h^{\bm u, D}=\mathcal{E}_h^{B}\cap \Gamma_{\bm u, D}$, and $\eta$
is again a stabilization parameter which is independent of $\lambda,\,R_i^{-1},\, \alpha_{p_i},\, {\alpha}_{ij}$, $i,j=1,\dots,n$, the network scale $n$
and the mesh size $h$.

If $\Gamma_{\bm u, D}=\Gamma_{p,N}=\Gamma$
and $\bm u_D=0, q_N=0$, then \eqref{general:boundary} reduces to \eqref{eq:75-77} which will be analyzed in the remainder of this paper. If the measure
of $\Gamma_{\bm u, N}$ is nonzero, then the analysis is similar. If $\Gamma_{\bm u, D}=\Gamma$ and the measure of any $\Gamma_{p_i,D}, i=1,\cdots, n$, is nonzero,
then one has to modify the norms according to Remark~3.1 in \cite{HongKraus2017parameter}.
This part of the analysis is left as future work. 
\end{remark}
}

\begin{proposition}\label{prop:mass_cons}
Let $(\boldsymbol u_h; \boldsymbol v_h; \bm p_h)\in \boldsymbol U_h\times\boldsymbol V_h\times \bm P_h$ be the solution
of \eqref{eq:75}-\eqref{eq:77}, then $(\boldsymbol u_h; \boldsymbol v_h; \bm p_h)$ satisfy the pointwise mass conservation
equation
\begin{equation}\label{eq:conservative}
-{\rm div} \bu_h - {\rm div}\bv_{i,h}  -(\alpha_{p_i} +\alpha_{ii})p_{i,h} + \sum_{\substack{{j=1}\\j\neq i}}^{n}{\alpha}_{ij} p_{j,h} = Q_{i,h}g_i,~i=1,\dots,n ,~\forall x\in K, \forall K\in  \mathcal{T}_h,
\end{equation}
where $Q_{i,h}$ denotes the $L^2$-projection on $P_{i,h}$.

Furthermore, if $g_i=0$, then $-{\rm div} \bu_h - {\rm div}\bv_{i,h}  -(\alpha_{p_i} +\alpha_{ii})p_{i,h} + \sum\limits_{\substack{{j=1}\\j\neq i}}^{n}{\alpha}_{ij} p_{j,h}=0$.
\end{proposition}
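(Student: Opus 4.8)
The plan is to test the third discrete equation~\eqref{eq:77} against suitable functions and to exploit the structural inclusion $\div \bm U(K)=\div \bm V_i(K)=Q_i(K)$ that holds on every element $K\in\mathcal{T}_h$ for each of the admissible triplets. For fixed $i$ I would introduce the elementwise residual
\[
r_i := -\divv\bu_h - \divv\bv_{i,h} - (\alpha_{p_i}+\alpha_{ii})p_{i,h} + \sum_{\substack{j=1\\j\neq i}}^n \alpha_{ij}p_{j,h} - Q_{i,h}g_i,
\]
so that proving the proposition amounts to showing $r_i=0$ on every $K$. The first key step is to verify that $r_i$ is a piecewise polynomial of the correct type, namely $r_i|_K\in Q_i(K)$ for all $K$: indeed $\divv\bu_h|_K\in\div\bm U(K)=Q_i(K)$ and $\divv\bv_{i,h}|_K\in\div\bm V_i(K)=Q_i(K)$ by the inclusion above, while $p_{i,h}|_K$, $p_{j,h}|_K$ and $Q_{i,h}g_i|_K$ lie in $Q_i(K)$ by the definition of the pressure spaces and of the $L^2$-projection (here one uses that the local pressure spaces coincide, $Q_j(K)=Q_i(K)$, as is the case for $BDM_l/RT_{l-1}/P_{l-1}$ and $BDFM_l/RT_{l-1}/P_{l-1}$).

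Next I would rewrite~\eqref{eq:77} in terms of $r_i$. Since $Q_{i,h}$ is the $L^2$-projection onto $P_{i,h}$, one has $(g_i,q_{i,h})=(Q_{i,h}g_i,q_{i,h})$ for every $q_{i,h}\in P_{i,h}$, so that~\eqref{eq:77} is exactly the orthogonality relation
\[
(r_i,q_{i,h})=0 \qquad \text{for all } q_{i,h}\in P_{i,h}.
\]
The remaining task is to upgrade this weak orthogonality to the pointwise identity $r_i\equiv 0$.

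The main obstacle, and the point requiring genuine care, is the global zero-mean constraint built into $P_{i,h}=\{q:\ q|_K\in Q_i(K),\ \int_\Omega q\,dx=0\}$: orthogonality to $P_{i,h}$ alone only forces $r_i$ to be a constant, not zero. To close this gap I would show that $r_i$ itself has vanishing mean and hence belongs to $P_{i,h}$. This follows termwise: $\int_\Omega\divv\bu_h=\int_{\partial\Omega}\bu_h\cdot\bm n=0$ and likewise $\int_\Omega\divv\bv_{i,h}=0$, because of the homogeneous normal boundary conditions defining $\bm U_h$ and $\bm V_{i,h}$; moreover $\int_\Omega p_{i,h}=\int_\Omega p_{j,h}=0$ since the pressures are zero-mean; and $\int_\Omega Q_{i,h}g_i=0$ because $Q_{i,h}g_i\in P_{i,h}$. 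Therefore $r_i\in P_{i,h}$, and choosing $q_{i,h}=r_i$ in the orthogonality relation yields $\|r_i\|^2=0$, that is $r_i=0$ pointwise on each $K$, which is precisely~\eqref{eq:conservative}. The final assertion for $g_i=0$ is then immediate, since $Q_{i,h}g_i=0$ makes the right-hand side vanish.
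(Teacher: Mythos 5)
Your proof is correct. The paper states Proposition~\ref{prop:mass_cons} without any proof, treating it as an immediate consequence of the property $\div \bm U(K)=\div \bm V_i(K)=Q_i(K)$ of the chosen triplets; your argument is exactly the one this relies on: the elementwise residual $r_i$ is a broken polynomial of the correct local type, equation~\eqref{eq:77} expresses its $L^2$-orthogonality to $P_{i,h}$, and this orthogonality is upgraded to the pointwise identity $r_i\equiv 0$. The only genuinely delicate step --- that orthogonality to the mean-zero space $P_{i,h}$ by itself only forces $r_i$ to be constant, and that this constant vanishes because every term of $r_i$ has zero mean (the divergence terms via the homogeneous normal boundary conditions built into $\bm U_h$ and $\bm V_{i,h}$, the pressure terms and $Q_{i,h}g_i$ via membership in the mean-zero spaces) --- is handled correctly and explicitly in your write-up, and is precisely the detail the paper leaves implicit.
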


For any
$\bm u \in \boldsymbol U_h$, 
we introduce the mesh dependent norms:
\begin{eqnarray*}
\|\bm{u}\|_h^2&=&\sum _{K \in \mathcal{T}_h} 
\|\ep(\bm{u})\|_{0,K}^2+\sum _{e \in \mathcal{E}_h} h_e^{-1}\|[ \bm u_t]\|_{0,e}^2, \\
\|\bm u\|_{1,h}^2&=&\sum _{K \in \mathcal{T}_h} \|\nabla\bm{u}\|_{0,K}^2+\sum _{e \in \mathcal{E}_h} h_e^{-1}\|[ \bm{u}_t]\|_{0,e}^2 .
\end{eqnarray*}
Next, for $\bm u\in  \boldsymbol U_h$, we define the ``DG''-norm
\begin{equation}\label{DGnorm}
\|\bm u\|^2_{DG}=\sum _{K \in \mathcal{T}_h} \|\nabla\bm{u}\|_{0,K}^2+\sum _{e \in \mathcal{E}_h} h_e^{-1}\|[ \bm{u}_t]\|_{0,e}^2+\sum _{K \in \mathcal{T}_h}h_K^2|\bm{u}|^2_{2,K},
\end{equation}
and, finally, the mesh-dependent norm $\|\bm \cdot \|_{\bm U_h}$ by
\begin{equation}\label{U_hnorm}
\|\bm u\|^2_{\bm U_h}=\|\bm u\|^2_{DG}+\lambda \|\div \bm u\|^2.
\end{equation}
We now summarize several results on well-posedness and approximation
properties of the DG formulation, see, e.g.~\cite{hong2016robust, honguniformly}:

\begin{itemize}
\item From the discrete version of Korn's inequality we have that the norms $\|\cdot\|_{DG}$,
$\|\cdot\|_h$, and $\|\cdot\|_{1,h}$ are equivalent on $\bm U_h$, namely,
\begin{equation}\label{Korninequality}
\|\bm{u}\|_{DG}\eqsim  \|\bm{u}\|_h\eqsim\|\bm u\|_{1,h},\quad\mbox{for all}\quad~\bm u \in \bm U_h .
\end{equation}
\item The bilinear form $a_h(\cdot,\cdot)$,
introduced in~\eqref{78} is continuous and we have
\begin{eqnarray}\label{continuity:a_h}
|a_h(\bm u,\bm w)|&\lesssim& \| \bm u  \|_{DG}  \| \bm w  \|_{DG},\quad\mbox{for all}\quad \bm u,~\bm w\in H^2(\mathcal{T}_h)^d.
\end{eqnarray}
\item For our choice of the finite element spaces { $\bm U_h, \bm{V}_h$ and $\bm P_h$} we
have the following inf-sup conditions, see, e.g.~\cite{schtzau2002mixed}:
\begin{equation}\label{inf-sup}
\begin{split}
  \inf_{(q_{1,h},\cdots,q_{n,h})\in (P_{1,h}\cdots P_{n,h})}\sup_{\bm{u}_h\in \bm{U}_h}\frac{(\operatorname{div}\bm{u}_h,\sum\limits_{i=1}^n q_{i,h})}{\|\bm{u}_h\|_{1,h}\|\sum\limits_{i=1}^n q_{i,h}\|}&\geq \beta_{sd},\\
  \inf_{q_{i,h}\in P_{i,h}}\sup_{\bm{v}_{i,h}\in \bm{V}_{i,h}}\frac{(\operatorname{div}\bm{v}_{i,h},q_{i,h})}{\|\bm{v}_{i,h}\|_{\div}\|q_{i,h}\|}&\geq \beta_{dd},~i=1,\dots,n,
\end{split}
\end{equation}
where $\beta_{sd}$ and $\beta_{dd}$ are positive constant independent of the parameters $\lambda,R_i^{-1}, \alpha_{p_i}, {\alpha}_{ij}, i,j=1,\dots,n$, the network scale $n$ and the mesh size $h$.
\item We also have that $a_h(\cdot,\cdot)$ is coercive, and the proof
  of this fact parallels the proofs of similar results:
\begin{equation}\label{coercivity:a_h}
a_h(\bm{u}_h,\bm{u}_h)\geq \alpha_a \|\bm{u}_h\|^2_h,\quad\mbox{for all}\quad~\bm{u}_h\in\bm{U}_h,
\end{equation}
where $\alpha_a$ is a positive constant independent of parameters $\lambda,R_i^{-1}, \alpha_{p_i}, {\alpha}_{ij}, i,j=1,\dots,n$, the network scale $n$ and the mesh size $h$.
\end{itemize}

Related to the discrete problem~\eqref{eq:75}-\eqref{eq:77} we introduce the bilinear form 
\begin{equation}\label{eq:79}
\begin{split}
&\mathcal A_h((\bu_h; \bv_{h}; \bm p_{h}),(\bw_h; \bz_{h}; \bm q_{h}))\\
&=a_h(\bm u_h,\bm w_h)+\lambda ( \div \boldsymbol  u_h, \div \boldsymbol  w_h) -\sum_{i=1}^{n} (p_{i,h},{\rm div} \bw_h) 
+\sum_{i=1}^{n}(R^{-1}_i\bv_{i,h},\bz_{i,h}) - \sum_{i=1}^{n}(p_{i,h},{\rm div} \bv_{i,h})  \\
&-\sum_{i=1}^{n}({\rm div}\bu_h,q_{i,h}) -\sum_{i=1}^{n}({\rm div}\bv_{i,h},q_{i,h}) +\sum_{i=1}^{n}\tilde{\alpha}_{ii} (p_{i,h},q_{i,h}) +\sum_{i=1}^{n}\sum_{\substack{{j=1}\\j\neq i}}^{n}\alpha_{ij} (p_{j,h},q_{i,h}).
\end{split}
\end{equation}

In view of the definitions of the norms $\|\cdot\|_{\bm U_h},\|\cdot\|_{\bm V}$ and $\|\cdot\|_{\bm P}$,
the boundedness of the bilinear form
$\mathcal A_h((\bu_h;\bv_{h};\bm p_{h}),(\bw_h;\bz_{h};\bm q_{h}))$
is obvious, i.e., the following theorem holds.
\begin{theorem}\label{boundd}
There exists a constant $C_{bd}$ independent of  the parameters $\lambda,R_i^{-1}, \alpha_{p_i}, {\alpha}_{ij}, i,j=1,\dots,n$,
the network scale $n$ and the mesh size $h$,
such that  for any $(\bm u_h; \bm v_h;\bm p_h)
\in \bm U_h\times\bm V_h\times \bm P_h, (\bm w_h; \bm z_h; \bm q_h)\in \bm U_h\times\bm V_h\times \bm P_h$
there holds
\begin{equation*}
|\mathcal A_h((\bu_h;\bv_{h};\bm p_{h}),(\bw_h;\bz_{h};\bm q_{h}))|\le C_{bd} (\|\bm u_h\|_{\bm U_h}+\|\bm v_h\|_{\bm V}
+\|\bm p_h\|_{\bm P})  (\|\bm w_h\|_{\bm U_h}+\|\bm z_h\|_{\bm V}+\|\bm q_h\|_{\bm P}).
\end{equation*}
\end{theorem}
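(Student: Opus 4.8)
The plan is to mirror the proof of Theorem~\ref{bound} term by term, the only genuinely new ingredient being the treatment of the consistency/penalty form $a_h(\cdot,\cdot)$ in place of the exact elasticity product $(\ep(\bu),\ep(\bw))$. First I would record the two matrix facts that drive the pressure estimates: since $\C=\sum_{i=1}^4\C_i$ with each $\C_i$ symmetric positive semidefinite, one has $\C_1+\C_2\preceq\C$ and $\C_4\preceq\C$ in the Loewner order, so that $\|(\C_1+\C_2)^{1/2}\bq\|\le\|\C^{1/2}\bq\|=\|\bq\|_{\bP}$ and $\|\C_4^{1/2}\bq\|\le\|\bq\|_{\bP}$ for every $\bq\in\bP$. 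I would also note the elementary bound $\lambda_0\|\div\bw\|^2\lesssim\|\bw\|^2_{\bm U_h}$, which follows from $\lambda_0=\max\{1,\lambda\}$, from $\|\div\bw\|^2\lesssim\|\bw\|^2_{DG}$, and from the definition \eqref{U_hnorm}, together with the identity $\tfrac{1}{\sqrt{\lambda_0}}\|\sum_i q_{i,h}\|=\|\C_4^{1/2}\bq_h\|$ that comes directly from the all-entries-$\tfrac{1}{\lambda_0}$ structure of $\C_4$.

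Then I would estimate the terms in \eqref{eq:79} group by group. For the elasticity block, continuity of $a_h$ from \eqref{continuity:a_h} gives $|a_h(\bu_h,\bw_h)|\lesssim\|\bu_h\|_{DG}\|\bw_h\|_{DG}\le\|\bu_h\|_{\bm U_h}\|\bw_h\|_{\bm U_h}$, while $\lambda(\div\bu_h,\div\bw_h)\le(\sqrt{\lambda}\|\div\bu_h\|)(\sqrt{\lambda}\|\div\bw_h\|)\le\|\bu_h\|_{\bm U_h}\|\bw_h\|_{\bm U_h}$. For the displacement--pressure couplings I would rewrite $\sum_i(p_{i,h},\div\bw_h)=(\sum_i p_{i,h},\div\bw_h)$ and insert the factor $\sqrt{\lambda_0}$ as in Theorem~\ref{bound}, recognizing $\tfrac{1}{\sqrt{\lambda_0}}\|\sum_i p_{i,h}\|=\|\C_4^{1/2}\bp_h\|\le\|\bp_h\|_{\bP}$ and $\sqrt{\lambda_0}\|\div\bw_h\|\lesssim\|\bw_h\|_{\bm U_h}$; the symmetric term $(\div\bu_h,\sum_i q_{i,h})$ is handled identically. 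The flux block $\sum_i(R_i^{-1}\bv_{i,h},\bz_{i,h})$ is bounded by Cauchy--Schwarz directly against $\|\bv_h\|_{\bV}\|\bz_h\|_{\bV}$, and the flux--pressure couplings $(\bp_h,\Divv\bz_h)$ and $(\Divv\bv_h,\bq_h)$ are split through $\C$ as $(\C^{1/2}\bp_h,\C^{-1/2}\Divv\bz_h)$, giving $\|\bp_h\|_{\bP}\|\bz_h\|_{\bV}$ by the definition \eqref{norms-v} of $\|\cdot\|_{\bV}$. Finally the pure pressure block equals $-((\C_1+\C_2)\bp_h,\bq_h)$ by the definitions of $\C_1,\C_2$, and splitting it as $((\C_1+\C_2)^{1/2}\bp_h,(\C_1+\C_2)^{1/2}\bq_h)$ yields $\|\bp_h\|_{\bP}\|\bq_h\|_{\bP}$ via $\C_1+\C_2\preceq\C$.

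Summing the estimates and absorbing the finitely many absolute constants (in particular the hidden constant in \eqref{continuity:a_h} and the dimension-dependent constant in $\|\div\bw\|\lesssim\|\bw\|_{DG}$) into a single $C_{bd}$ gives the claimed bound, with $C_{bd}$ independent of $\lambda$, $R_i^{-1}$, $\alpha_{p_i}$, $\alpha_{ij}$, the network scale $n$ and the mesh size $h$. I do not expect a real obstacle here: the argument is structurally identical to Theorem~\ref{bound}, and the parameter- and $n$-robustness is inherited entirely from the inequalities $\C_1+\C_2\preceq\C$, $\C_4\preceq\C$ and the $\C^{-1}$-weighting built into $\|\cdot\|_{\bV}$. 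The only point needing slight care is that the discrete elasticity contribution must be controlled through the DG continuity estimate \eqref{continuity:a_h} rather than a plain Cauchy--Schwarz inequality, and that the constant there is itself independent of all problem parameters, which is exactly what \eqref{continuity:a_h} asserts.
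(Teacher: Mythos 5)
Your proposal is correct and follows exactly the route the paper intends: the paper declares Theorem~\ref{boundd} ``obvious'' in view of the norm definitions, meaning precisely the term-by-term mirroring of Theorem~\ref{bound} that you carry out, with the splittings $\C_1+\C_2\preceq\C$, $\C_4\preceq\C$, the $\C^{\pm1/2}$ pairing of flux--pressure terms, and the DG continuity estimate \eqref{continuity:a_h} replacing the plain Cauchy--Schwarz bound on the elasticity block. Your filled-in details (including the dimension-dependent but parameter-independent constant in $\|\div\bw\|\lesssim\|\bw\|_{DG}$) are all sound, so there is nothing to correct.
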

We come to our second main result.

\begin{theorem}\label{Dis:Stability}
There exits a constant $\beta_0>0$ independent of  the parameters $\lambda,R_i^{-1}, \alpha_{p_i}, {\alpha}_{ij}, i,j=1,\dots,n$,
the network scale $n$ and the mesh size $h$, such that 
\begin{equation}\label{eq:80}
\resizebox{.935\hsize}{!}{$
\displaystyle\inf_{(\boldsymbol u_h;  \boldsymbol v_h; \bm p_h)\in \boldsymbol U_h\times\boldsymbol V_h\times \bm P_h} 
\sup_{(\boldsymbol w_h;\boldsymbol z_h; \bm q_h)\in \boldsymbol U_h\times\boldsymbol V_h\times \bm P_h}\frac{\mathcal A_h((\bu_h;\bv_{h};\bm p_{h}),(\bw_h;\bz_{h};\bm q_{h})))}{(\|\boldsymbol u_h\|_{\bm U_h}+
\|\boldsymbol v_h\|_{\boldsymbol V}+\|\bm p_h\|_{\bm P})(\|\boldsymbol w_h\|_{\bm U_h}+\|\boldsymbol z_h\|_{\boldsymbol V}+\|\bm q_h\|_{\bm P})}\geq \beta_0.
$}
\end{equation}
\end{theorem}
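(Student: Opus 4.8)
The plan is to follow the blueprint of the continuous stability proof (Theorem~\ref{stability:continuous}) almost verbatim, replacing its three structural ingredients by their discrete counterparts: the form $(\ep(\bu),\ep(\bw))$ by $a_h(\bu_h,\bw_h)$, and the two continuous inf-sup lemmas (Lemmas~\ref{divinf-sup} and~\ref{stokesinf-sup}) by the discrete inf-sup conditions~\eqref{inf-sup}. First, given $(\bu_h;\bv_h;\bp_h)\in\bm U_h\times\bm V_h\times\bm P_h$, I would use the second line of~\eqref{inf-sup} to produce, for each $i$, a flux $\bm\psi_{i,h}\in\bm V_{i,h}$ with $\divv\bm\psi_{i,h}=\sqrt{R}\,p_{i,h}$ and $\|\bm\psi_{i,h}\|_{\divn}\le\beta_{dd}^{-1}\sqrt{R}\,\|p_{i,h}\|$, and the first line of~\eqref{inf-sup} to produce $\bu_{0,h}\in\bm U_h$ with $\divv\bu_{0,h}=\tfrac{1}{\sqrt{\lambda_0}}\sum_{i=1}^n p_{i,h}$ and $\|\bu_{0,h}\|_{1,h}\le\beta_{sd}^{-1}\tfrac{1}{\sqrt{\lambda_0}}\|\sum_{i=1}^n p_{i,h}\|$. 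The key point enabling these exact divergence identities in the discrete spaces is the local compatibility $\divv\bm U(K)=\divv\bm V_i(K)=Q_i(K)$ of the chosen triplets.

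Next I would take the test tuple exactly as in~\eqref{choice}, namely $\bw_h=\delta\bu_h-\tfrac{1}{\sqrt{\lambda_0}}\bu_{0,h}$, $\bz_{i,h}=\delta\bv_{i,h}-\sqrt{R}\,\bm\psi_{i,h}$, and $\bq_h=-\delta\bp_h-\C^{-1}\Divv\bv_h$, with $\delta>0$ to be fixed later. The boundedness half, $\|\bw_h\|_{\bm U_h}+\|\bz_h\|_{\bm V}+\|\bq_h\|_{\bm P}\lesssim\|\bu_h\|_{\bm U_h}+\|\bv_h\|_{\bm V}+\|\bp_h\|_{\bm P}$, is then obtained as in the continuous case: the bound on $\bm\psi_{i,h}$ combined with~\eqref{etauu:2} controls $\|\sqrt{R}\,\bm\psi_h\|_{\bm V}$ by $\beta_{dd}^{-1}\|\bp_h\|_{\bm P}$; the estimate $\|\C^{-1}\Divv\bv_h\|_{\bm P}\le\|\bv_h\|_{\bm V}$ is immediate from~\eqref{norms-p}; and the only genuine modification concerns $\bu_{0,h}$, where I would invoke the Korn equivalence~\eqref{Korninequality}, $\|\bu_{0,h}\|_{DG}\eqsim\|\bu_{0,h}\|_{1,h}$, to pass from the inf-sup bound to a bound of $\|\tfrac{1}{\sqrt{\lambda_0}}\bu_{0,h}\|_{\bm U_h}$ by a constant times $\|\bp_h\|_{\bm P}$.

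For the coercivity half I would expand $\mathcal A_h((\bu_h;\bv_h;\bp_h),(\bw_h;\bz_h;\bq_h))$ and substitute the divergence identities for $\bm\psi_{i,h}$ and $\bu_{0,h}$, mirroring the algebra that leads to~\eqref{Young}. The diagonal elasticity contribution $\delta\,a_h(\bu_h,\bu_h)$ is bounded below by $\delta\alpha_a\|\bu_h\|_h^2$ via~\eqref{coercivity:a_h}, while the cross term $-\tfrac{1}{\sqrt{\lambda_0}}a_h(\bu_h,\bu_{0,h})$ is absorbed through continuity~\eqref{continuity:a_h} and Young's inequality, again using Korn to return $\|\bu_{0,h}\|_{DG}$ to the pressure bound. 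The pressure--flux couplings are handled exactly as in the continuous argument: identity~\eqref{eta:SSPD} gives $(\C^{-1}\Divv\bv_h,\Divv\bv_h)-((\C_1+\C_2)\C^{-1}\Divv\bv_h,\C^{-1}\Divv\bv_h)\ge0$, and estimate~\eqref{etauu:3} of Lemma~\ref{etauu} yields $(\C^{-1}(\divv\bu_h,\dots,\divv\bu_h)^T,(\divv\bu_h,\dots,\divv\bu_h)^T)\le\lambda_0\|\divv\bu_h\|^2$. Fixing $\epsilon_1,\epsilon_2$ as in the continuous proof, then $\delta$ large enough, and bounding $\|\divv\bu_h\|^2\lesssim\|\bu_h\|_h^2\eqsim\|\bu_h\|_{DG}^2$, I expect to reach $\mathcal A_h(\cdots)\ge c\big(\|\bu_h\|_{\bm U_h}^2+\|\bv_h\|_{\bm V}^2+\|\bp_h\|_{\bm P}^2\big)$, which with the boundedness half delivers~\eqref{eq:80}.

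The main obstacle is the cross term $a_h(\bu_h,\bu_{0,h})$: unlike the continuous form $(\ep(\bu),\ep(\bu_0))$, which is a single symmetric bilinear form controlled by a bare Cauchy--Schwarz, the DG form $a_h$ is coercive only in $\|\cdot\|_h$ and bounded only in $\|\cdot\|_{DG}$, so this term must be routed through~\eqref{continuity:a_h} and~\eqref{Korninequality}, and I must check that the ensuing constants depend only on $\alpha_a$, $\eta$, $\beta_{sd}$ and $\beta_{dd}$, and not on any model parameter nor on $n$ or $h$. A secondary point needing care is that the consistency and penalty terms in $a_h$ do not disturb the divergence identities used above; this is guaranteed by the $H(\divv)$-conformity of $\bm U_h$ together with the exact inclusion $\divv\bm U_h\subseteq P_{i,h}$.
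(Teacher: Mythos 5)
Your proposal is correct and is essentially the paper's own argument: the paper proves Theorem~\ref{Dis:Stability} simply by remarking that one follows the proof of Theorem~\ref{stability:continuous} with the discrete ingredients (the inf-sup conditions~\eqref{inf-sup}, the coercivity~\eqref{coercivity:a_h} and continuity~\eqref{continuity:a_h} of $a_h$, and the discrete Korn equivalence~\eqref{Korninequality}), which is precisely what you spell out, including the correct treatment of the cross term $a_h(\bu_h,\bu_{0,h})$ and the exact divergence identities from $\div\bm U(K)=\div\bm V_i(K)=Q_i(K)$.
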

The proof of this theorem can be obtained by following the proof of Theorem~\ref{stability:continuous}
and using the technique shown in~\cite{HongKraus2017parameter}.

From the above theorem, we get the following stability estimate.
\begin{corollary}\label{eq:92}
Let $(\boldsymbol u_h;  \boldsymbol v_h; \bm p_h)\in \boldsymbol U_h\times \boldsymbol V_h\times \bm P_h$ be the solution of \eqref{eq:75}-\eqref{eq:77},
then we have the estimate
\begin{equation}
\|\boldsymbol u_h\|_{\boldsymbol U_h}+\|\boldsymbol  v_h\|_{\boldsymbol V}+\|\bm p_h\|_{\bm P}\leq C_2 (\|\boldsymbol f\|_{\boldsymbol U_h^*}+\|\bm g\|_{\bm P^*}),
\end{equation} 
where $\|\boldsymbol f\|_{\boldsymbol U_h^*}=
\sup\limits_{\boldsymbol w_h\in \boldsymbol U_h}\frac{(\boldsymbol f, \boldsymbol w_h)}
{\|\boldsymbol w_h\|_{\boldsymbol U_h}},\|\bm g\|_{\bm P^*}=\sup\limits_{\bm q_h\in \bm P_h}\frac{(\bm g, \bm q_h)}{\|\bm q_h\|_{\bm P}}$
and $C_2$ is a constant independent of $\lambda,\,R_i^{-1},\,\alpha_{p_i},\,{\alpha}_{ij}$, $i,j=1,\dots,n$, the network scale $n$ and the mesh size $h$.
\end{corollary}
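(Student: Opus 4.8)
The plan is to obtain the estimate as an immediate consequence of the discrete inf-sup stability in Theorem~\ref{Dis:Stability}, exactly paralleling the passage from Theorem~\ref{stability:continuous} to Corollary~\ref{eq:67} at the continuous level.

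First I would recast the discrete system \eqref{eq:75}--\eqref{eq:77} in compact operator form: find $(\bu_h;\bv_h;\bm p_h)\in \boldsymbol U_h\times\boldsymbol V_h\times\bm P_h$ such that
\[
\mathcal A_h((\bu_h;\bv_h;\bm p_h),(\bw_h;\bz_h;\bm q_h))=(\boldsymbol f,\bw_h)+(\bm g,\bm q_h)
\]
holds for all $(\bw_h;\bz_h;\bm q_h)\in \boldsymbol U_h\times\boldsymbol V_h\times\bm P_h$, where $\mathcal A_h$ is the bilinear form \eqref{eq:79} and $\bm g^T=(g_1,\dots,g_n)$. I would note that the flux test function $\bz_h$ does not appear on the right-hand side, since the flux equation \eqref{eq:76} is homogeneous.

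Next I would insert the exact discrete solution into the infimum position of the inf-sup inequality \eqref{eq:80}. Since $(\bu_h;\bv_h;\bm p_h)$ solves the variational problem, the numerator of the supremum equals the linear functional above, so that
\[
\beta_0\big(\|\bu_h\|_{\boldsymbol U_h}+\|\bv_h\|_{\boldsymbol V}+\|\bm p_h\|_{\bm P}\big)
\le \sup_{(\bw_h;\bz_h;\bm q_h)}\frac{(\boldsymbol f,\bw_h)+(\bm g,\bm q_h)}{\|\bw_h\|_{\boldsymbol U_h}+\|\bz_h\|_{\boldsymbol V}+\|\bm q_h\|_{\bm P}}.
\]
I would then bound the numerator through the very definitions of the dual norms, $(\boldsymbol f,\bw_h)\le\|\boldsymbol f\|_{\boldsymbol U_h^*}\|\bw_h\|_{\boldsymbol U_h}$ and $(\bm g,\bm q_h)\le\|\bm g\|_{\bm P^*}\|\bm q_h\|_{\bm P}$, and drop the nonnegative term $\|\bz_h\|_{\boldsymbol V}$ from the denominator, which shows that the supremum does not exceed $\|\boldsymbol f\|_{\boldsymbol U_h^*}+\|\bm g\|_{\bm P^*}$. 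Dividing by $\beta_0$ then gives the assertion with $C_2=1/\beta_0$.

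I do not expect a genuine obstacle in this argument, as it is the standard ``inf-sup $\Rightarrow$ a priori bound'' step for well-posed saddle-point problems. The only point requiring care is that the final constant must not smuggle in any hidden parameter dependence; this is guaranteed precisely because Theorem~\ref{Dis:Stability} provides a $\beta_0$ independent of $\lambda$, $R_i^{-1}$, $\alpha_{p_i}$, $\alpha_{ij}$, the network scale $n$ and the mesh size $h$, while the two dual-norm estimates introduce no additional constants. Hence $C_2=1/\beta_0$ is parameter-robust in the same sense.
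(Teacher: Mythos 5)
Your proposal is correct and is precisely the argument the paper intends: the paper gives no explicit proof, stating only that the corollary follows "from the above theorem," i.e., by the standard passage from the discrete inf-sup condition \eqref{eq:80} to an a priori bound, which is exactly what you carry out. Your observations that the flux equation is homogeneous (so $\bz_h$ never enters the right-hand side) and that $C_2 = 1/\beta_0$ inherits the parameter-robustness of $\beta_0$ are the only points of substance, and both are handled correctly.
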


\begin{remark}
Define 
\begin{equation}\label{Preconditioner:Bh}
\mathcal{B}_h:=\left[\begin{array}{ccc}
\mathcal{B}_{h,\bm u}^{-1} & \bm 0 & \bm 0 \\
\bm 0 & \mathcal{B}_{h,\bm v}^{-1}& \bm 0 \\
\bm 0& \bm 0& \mathcal{B}_{h,\bm p}^{-1}  
\end{array}
\right],
\end{equation}
where 
$$
\mathcal{B}_{h,\bm u}=-\div_h \ep_h-\lambda \nabla_h \div_h,
$$
\begin{align*}
\mathcal{B}_{h,\bm v}&=  
\begin{bmatrix}
R_1^{-1}I_h&0  & \dots &0  \\
0 &R_2^{-1} I_h& \dots &0 \\
\vdots &\vdots  & \ddots & \vdots  \\
0& 0 & \dots &R_n^{-1}I_h
\end{bmatrix}
-
\begin{bmatrix}
\tilde{\gamma}_{11}\nabla_h{\rm div}_h &\tilde{\gamma}_{12}\nabla_h{\rm div}_h& \dots &\tilde{\gamma}_{1n}\nabla_h{\rm div}_h\\
\tilde{\gamma}_{21}\nabla_h{\rm div}_h &\tilde{\gamma}_{22}\nabla_h{\rm div}_h & \dots &\tilde{\gamma}_{2n}\nabla_h{\rm div}_h\\
\vdots &\vdots  & \ddots & \vdots  \\
\tilde{\gamma}_{n1}\nabla_h{\rm div}_h&\tilde{\gamma}_{n2}\nabla_h{\rm div}_h & \dots &\tilde{\gamma}_{nn}\nabla_h{\rm div}_h
\end{bmatrix},
\end{align*}
\begin{align*}
\mathcal{B}_{h,\bm p}&=
\begin{bmatrix}
\gamma_{11}I_h&\gamma_{12}I_h& \dots &\gamma_{1n}I_h\\
\gamma_{21}I_h&\gamma_{22}I_h& \dots &\gamma_{2n}I_h\\
\vdots&\vdots  &\ddots&\vdots\\
\gamma_{n1}I_h&\gamma_{n2}I_h& \dots &\gamma_{nn}I_h\\
\end{bmatrix}.
\end{align*}

Then due to the theory presented in \cite{Mardal2011preconditioning}, Theorems~\ref{boundd} and~\ref{Dis:Stability} imply
that the norm-equivalent (canonical) block-diagonal preconditioner $\mathcal{B}_h$ for the operator 
\begin{equation}\label{operator:A_h}
\mathcal{A}_h:=
\begin{bmatrix}
- {\rm div}_h \ep_h  - \lambda \nabla_h {\rm div}_h     & 0  & \dots     & \dots&0      &   \nabla_h & \dots & \dots & \nabla_h    \\
\\
0      & R_1^{-1}I_h &     0      & \dots  &0         &  \nabla_h  &  0      & \dots  &0 \\
\vdots &  0           &     \ddots &     & \vdots       &  0           &     \ddots &     & \vdots \\
\vdots &      \vdots  &            &\ddots  & 0            &       \vdots  &            &\ddots  & 0 \\
0      &     0        &  \dots     & 0      &R_n^{-1}I_h&  0        &  \dots     & 0   & \nabla_h\\
\\
- {\rm div}_h   &- {\rm div}_h  &0 &\dots   &  0            &\tilde\alpha_{11}I_h& \alpha_{12} I_h & \dots& \alpha_{1n}I_h\\
\vdots                 &   0       &   \ddots        &                 & \vdots      &  \alpha_{21}I_h & \ddots & & \alpha_{2n}I_h\\
\vdots                 &  \vdots  &              & \ddots              & 0    &  \vdots &  & \ddots & \vdots\\
- {\rm div}_h   &   0    & \dots &   0    &-  {\rm div}_h   & \alpha_{n1}I_h  &  \alpha_{n2}I_h &   \dots   & \tilde\alpha_{nn} I_h  \\     
\end{bmatrix},
\end{equation}
induced by the bilinear form~\eqref{eq:79} is uniform with respect to variation of the model and dicretization parameters.

This means that the condition number $\kappa(\mathcal{B}_h \mathcal{A}_h)$ is uniformly bounded with respect to the
parameters $\lambda,R_i^{-1}, \alpha_{p_i}, {\alpha}_{ij}, i,j=1,\dots,n$ in the ranges specified in~\eqref{parameter:range},
the network scale $n$ and the mesh size $h$. 

To apply the preconditioner $\mathcal{B}_h$, one has to solve an elasticity system discretized by an $H(\div)$-conforming
discontinuous Galerkin method \cite{hong2016robust} and $n$ coupled elliptic $H(\div)$ problems discretized by $RT$ elements. 
In the lowest order case and for $n=1$, optimal solvers for this task have been proposed in~\cite{kraus2016heterogeneous}.
\end{remark}

\section{Error estimates}\label{sec:error_estimates}

In this section, we derive the error estimates that follow from the results presented in Section~\ref{sec:uni_stab_disc_model}.
{Let    
$\Pi_B^{\div}: H^1(\Omega)^d\mapsto \bm U_h$ be the canonical interpolation operator. We also denote the
$L^2$-projection on $P_{i,h}$ by~$Q_{i,h}$. The following Lemma, see \cite{hong2016robust}, summarizes some of the properties of
$\Pi_B^{ \div}$ and $Q_{i,h}$ needed for our proof.}
\begin{lemma}\label{stable:Interpolation}
For all $\bm w \in H^1(K)^d$ we have
\begin{eqnarray*}
 \div \Pi_B^{ \div}=Q_{i,h}  \div\;;\quad
|\Pi_B^{ \div} \bm w|_{1,K} \lesssim |\bm w|_{1,K};~  \|\bm w-\Pi_B^{ \div} \bm w\|^2_{0,\partial K}\lesssim h_K
  |\bm w|^2_{1,K}.
\end{eqnarray*}
\end{lemma}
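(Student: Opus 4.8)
The plan is to establish the three asserted properties in turn, all of them consequences of the definition of $\Pi_B^{\div}$ through the degrees of freedom attached to the $RT_{l-1}(K)/BDM_l(K)/BDFM_l(K)$ elements, namely the normal moments $\int_e(\bm w\cdot\bm n)\,q\,ds$ against $q\in P_{l-1}(e)$ on each face $e\subset\partial K$ together with the interior volume moments against the appropriate polynomial space. Throughout I would rely on the structural identity $\div\bm U(K)=\div\bm V_i(K)=Q_i(K)$ noted after the definition of the local spaces.

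First I would prove the commuting relation $\div\Pi_B^{\div}=Q_{i,h}\div$. For any $q\in Q_i(K)=P_{l-1}(K)$, integration by parts gives
\begin{equation*}
\int_K\div(\Pi_B^{\div}\bm w)\,q\,dx=-\int_K\Pi_B^{\div}\bm w\cdot\nabla q\,dx+\sum_{e\subset\partial K}\int_e(\Pi_B^{\div}\bm w\cdot\bm n)\,q\,ds.
\end{equation*}
Since $\nabla q$ lies in the polynomial space tested by the interior moments and $q|_e\in P_{l-1}(e)$ is tested by the face moments, each term on the right coincides with the same expression with $\bm w$ in place of $\Pi_B^{\div}\bm w$; reassembling by integration by parts, the right-hand side equals $\int_K(\div\bm w)\,q\,dx$. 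As $\div\Pi_B^{\div}\bm w\in P_{l-1}(K)$ and $q$ is arbitrary, this identifies $\div\Pi_B^{\div}\bm w$ with the $L^2$-projection $Q_{i,h}\div\bm w$.

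Next I would obtain the $H^1$-stability by a scaling argument. Pulling back to the reference element $\hat K$ through the contravariant Piola transform, which preserves $H(\div)$ and normal traces, the reference interpolant $\hat\Pi$ maps continuously into a fixed finite-dimensional polynomial space: its face moments are bounded functionals on $H^1(\hat K)^d$ by the trace theorem and its volume moments by Cauchy--Schwarz. Since $\hat\Pi$ reproduces constant vector fields, choosing $\bm c$ to be the mean of $\hat{\bm w}$ and using Poincar\'e's inequality on $\hat K$ gives the seminorm bound $|\hat\Pi\hat{\bm w}|_{1,\hat K}=|\hat\Pi(\hat{\bm w}-\bm c)|_{1,\hat K}\lesssim\|\hat{\bm w}-\bm c\|_{1,\hat K}\lesssim|\hat{\bm w}|_{1,\hat K}$. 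Transforming back to a general $K$ and invoking the shape-regularity of $\mathcal{T}_h$ to control the Jacobian factors then yields $|\Pi_B^{\div}\bm w|_{1,K}\lesssim|\bm w|_{1,K}$ with a constant independent of $h_K$.

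Finally, the boundary estimate follows by combining the scaled trace inequality, applied to $\bm\phi=\bm w-\Pi_B^{\div}\bm w$,
\begin{equation*}
\|\bm\phi\|_{0,\partial K}^2\lesssim h_K^{-1}\|\bm\phi\|_{0,K}^2+h_K\,|\bm\phi|_{1,K}^2,
\end{equation*}
with the $L^2$-approximation bound $\|\bm w-\Pi_B^{\div}\bm w\|_{0,K}\lesssim h_K|\bm w|_{1,K}$, which comes from the Bramble--Hilbert lemma since $\Pi_B^{\div}$ reproduces constants, and the stability $|\bm w-\Pi_B^{\div}\bm w|_{1,K}\lesssim|\bm w|_{1,K}$ just established. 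I expect the $H^1$-stability of the canonical $H(\div)$-interpolant to be the main obstacle: in contrast to the $L^2$ approximation estimate, which is immediate once constant reproduction is known, uniform $H^1$-stability requires care in tracking how the Piola transform and the face-normalized degrees of freedom scale under affine maps, and the shape-regularity of the mesh is what keeps the hidden constant independent of $h_K$.
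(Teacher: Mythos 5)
Your proof is correct, but there is nothing in the paper to compare it against: the authors do not prove Lemma~\ref{stable:Interpolation} at all, they import it from the reference \cite{hong2016robust} (``The following Lemma, see \cite{hong2016robust}, summarizes some of the properties of $\Pi_B^{\div}$ and $Q_{i,h}$ needed for our proof''). What you have written is the standard self-contained argument that the cited literature relies on: (i) the commuting identity $\div\Pi_B^{\div}=Q_{i,h}\div$ obtained by integrating by parts against $q\in Q_i(K)$ and observing that the face and interior degrees of freedom exactly test the resulting boundary and volume terms; (ii) $H^1$-stability by Piola pull-back to the reference element, boundedness of the reference interpolant on $H^1(\hat K)^d$ (trace theorem for the normal moments), constant reproduction plus Poincar\'e, and shape regularity to absorb the Jacobian factors; (iii) the boundary estimate from the scaled trace inequality combined with the $O(h_K)$ $L^2$-error (Bramble--Hilbert, using constant reproduction again) and the stability from (ii). This is a faithful substitute for the citation, and you correctly identified the genuine subtlety, namely that the canonical $H(\div)$ interpolant is only bounded on $H^1$, not on $H(\div)$, which is why the whole lemma is phrased for $\bm w\in H^1(K)^d$. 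Two small points of precision: for $BDM_l$ the normal moments are taken against $P_l(e)$ rather than $P_{l-1}(e)$ as you state (the latter is the $RT_{l-1}$ case); this is harmless, since your commuting argument only needs the moments against traces of $Q_i(K)=P_{l-1}(K)$, which both DOF sets contain. Also, your scaling step tacitly uses that the interpolant commutes with the Piola transform, $\widehat{\Pi_B^{\div}\bm w}=\hat\Pi\hat{\bm w}$; it would be worth stating this explicitly, since it is precisely this property that lets you transfer the stability bound from $\hat K$ to a general $K$.
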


\begin{theorem}\label{error0}
Let $(\bm u;\bm v; \bm p)$ be the solution of~\eqref{eq:8} and $(\bm u_h;\bm v_h;\bm p_h)$
be the solution of \eqref{eq:75}--\eqref{eq:77}. Then the error estimates
\begin{equation} \label{eq:erroruv}
\|\bm u-\bm u_h\|_{\bm U_h}+\|\bm v-\bm v_h\|_{\bm V}\leq C_{e,u}  \inf\limits_{\boldsymbol w_h\in \boldsymbol U_h, \bm z_h\in \bm V_h}\Big(\|\bm u-\bm w_h\|_{\bm U_h}+\|\bm v-\bm z_h\|_{\bm V}\Big),
\end{equation}
and 
\begin{equation} \label{eq:errorp}
\|\bm p-\bm p_h\|_{\bm P}\leq C_{e,p}  \inf\limits_{\boldsymbol w_h\in \boldsymbol U_h, \bm z_h\in \bm V_h, \bm q_h\in {\bm P_h}}\Big(\|\bm u-\bm w_h\|_{\bm U_h}+\|\bm v-\bm z_h\|_{\bm V}+\|\bm p-\bm q_h\|_{\bm P}\Big),
\end{equation}
hold, where $C_{e,u}, C_{e,p}$ are constants independent of $\lambda,R_i^{-1}, \alpha_{p_i}, {\alpha}_{ij}, i,j=1,\dots,n$, the network scale $n$ and the mesh size $h$. 
\end{theorem}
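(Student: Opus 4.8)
The plan is to follow the standard theory for saddle-point problems in the spirit of Brezzi, combining the discrete inf-sup condition of Theorem~\ref{Dis:Stability} with the boundedness of Theorem~\ref{boundd}, but then to exploit the strong mass conservation of Proposition~\ref{prop:mass_cons} together with the commuting property of Lemma~\ref{stable:Interpolation} in order to decouple the displacement/flux error from the pressure approximation. First I would establish \emph{consistency} (Galerkin orthogonality). Since the exact solution $(\bm u;\bm v;\bm p)$ of~\eqref{eq:8} is sufficiently regular, its tangential jumps $[\bm u_t]$ vanish across interior edges and the averages $\{\ep(\bm u)\}$ are single-valued, so the penalty and symmetrization terms in $a_h$ contribute nothing and $(\bm u;\bm v;\bm p)$ satisfies the discrete equations~\eqref{eq:75}--\eqref{eq:77} with the same data. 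Subtracting the discrete problem then yields
\begin{equation*}
\mathcal A_h((\bm u-\bm u_h;\bm v-\bm v_h;\bm p-\bm p_h),(\bm w_h;\bm z_h;\bm q_h))=0
\quad\text{for all }(\bm w_h;\bm z_h;\bm q_h)\in\bm U_h\times\bm V_h\times\bm P_h.
\end{equation*}

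Next I would split each error into an approximation part and a discrete part, e.g.\ $\bm u-\bm u_h=(\bm u-\bm w_h)+(\bm w_h-\bm u_h)$ with arbitrary $\bm w_h\in\bm U_h$, and analogously for $\bm v$ and $\bm p$; the discrete parts $\bm w_h-\bm u_h$, $\bm z_h-\bm v_h$, $\bm q_h-\bm p_h$ lie in the finite element spaces. Applying the discrete inf-sup condition of Theorem~\ref{Dis:Stability} to this discrete triple produces an admissible test triple whose combined norm is controlled; inserting the Galerkin orthogonality above replaces the discrete error in the numerator by the approximation error, and the boundedness of Theorem~\ref{boundd} bounds the resulting bilinear form. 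A final triangle inequality then gives the \emph{combined} quasi-optimal estimate, in which all three errors are simultaneously bounded by $\inf\big(\|\bm u-\bm w_h\|_{\bm U_h}+\|\bm v-\bm z_h\|_{\bm V}+\|\bm p-\bm q_h\|_{\bm P}\big)$. This already furnishes the pressure estimate~\eqref{eq:errorp}.

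The genuinely delicate point is the refined estimate~\eqref{eq:erroruv}, in which the displacement and flux errors must be bounded \emph{without} the pressure-approximation term. Here I would exploit the structure-preserving interpolants, choosing $\bm w_h=\Pi_B^{\div}\bm u$ and the canonical $H(\div)$-interpolant of $\bm v$, together with $\bm q_h$ the componentwise $L^2$-projection $Q_{i,h}p_i$. Because $\div\bm w_h,\div\bm z_{i,h}\in P_{i,h}$ while $p_i-Q_{i,h}p_i\perp P_{i,h}$, every coupling term of the form $(p_i-Q_{i,h}p_i,\div\bm w_h)$ or $(p_i-Q_{i,h}p_i,\div\bm z_{i,h})$ vanishes by orthogonality of the $L^2$-projection. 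Combined with the commuting relation $\div\Pi_B^{\div}=Q_{i,h}\div$ of Lemma~\ref{stable:Interpolation} and the pointwise constraint of Proposition~\ref{prop:mass_cons}, this shows that the discrete displacement/flux error satisfies the discrete mass-balance constraint homogeneously; testing the first two blocks of $\mathcal A_h$ with $(\bm w_h-\bm u_h;\bm z_h-\bm v_h)$ therefore eliminates the pressure unknown entirely. The surviving form $a_h(\cdot,\cdot)+\lambda(\div\cdot,\div\cdot)+\sum_i(R_i^{-1}\cdot,\cdot)$ is coercive on $\bm U_h\times\bm V_h$ by~\eqref{coercivity:a_h}, so boundedness closes the argument and leaves only the $\bm U_h$- and $\bm V$-approximation errors on the right-hand side.

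I expect the main obstacle to be making this decoupling rigorous \emph{uniformly} in all model parameters. The subtlety is the flux divergence contribution $(\C^{-1}\Divv(\bm v-\bm v_h),\Divv\bm z_h)$ entering the $\bm V$-norm: controlling it without reintroducing a dependence on the pressure approximation requires using the pointwise mass-conservation identity to express $\Divv(\bm v-\bm v_h)$ through the displacement divergence and the pressure unknowns, and then absorbing the resulting matrix-weighted terms by means of the spectral bounds on $\C$ established in Lemma~\ref{etauu}. Verifying that the constants produced in this step remain independent of $\lambda$, the $R_i^{-1}$, the coupling matrices $\C_1,\C_2$, and the network scale $n$ is the part that demands the most care.
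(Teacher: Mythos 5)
Your proposal is sound, and its backbone coincides with the paper's: consistency of $a_h$ giving Galerkin orthogonality, the structure-preserving choices $\bm u_I=\Pi_B^{\div}\bm u$ and $p_{i,I}=Q_{i,h}p_i$, and the property $\div\bm U_h=\div\bm V_{i,h}=P_{i,h}$, which makes every coupling with the pressure approximation error vanish. Where you genuinely diverge is in how the discrete error $(\bm e_u;\bm e_v;\bm e_p)=(\bm u_I-\bm u_h;\bm v_I-\bm v_h;\bm p_I-\bm p_h)$ is then bounded. The paper applies the discrete inf-sup stability of Theorem~\ref{Dis:Stability} once to this triple: after the orthogonality reductions, the right-hand sides of the error equations contain only $a_h(\bm u_I-\bm u,\cdot)$, $\sum_i(R_i^{-1}(\bm v_{i,I}-\bm v_i),\cdot)$ and $(\Divv(\bm v-\bm v_I),\cdot)$, so the stability estimate bounds $\|\bm e_u\|_{\bm U_h}+\|\bm e_v\|_{\bm V}$ \emph{and} $\|\bm e_p\|_{\bm P}$ simultaneously by displacement/flux approximation quantities alone; both \eqref{eq:erroruv} and \eqref{eq:errorp} then follow by triangle inequalities, and the flux interpolant may remain arbitrary, which immediately yields the infimum over $\bm z_h$. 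You instead get \eqref{eq:errorp} from the standard combined quasi-optimality and \eqref{eq:erroruv} from a coercivity/energy argument; this also works, but three points in your sketch need tightening. First, testing the first two blocks with $(\bm e_u;\bm e_v)$ does not eliminate the pressure ``entirely'': the coupling $-\sum_i(e_{p_i},\divv\bm e_u+\divv\bm e_{v_i})$ equals, by the homogeneous discrete mass balance, $+((\C_1+\C_2)\bm e_p,\bm e_p)\geq 0$, a sign-favorable term to be kept on the left (equivalently, add the third error equation tested with $-\bm e_p$). Second, coercivity \eqref{coercivity:a_h} controls only the weighted $L^2$ part of $\|\bm e_v\|_{\bm V}$; the missing piece $(\C^{-1}\Divv\bm e_v,\Divv\bm e_v)$ is recovered, as you anticipate, from the pointwise identity $\Divv\bm e_v=-(\divv\bm e_u,\dots,\divv\bm e_u)^T-(\C_1+\C_2)\bm e_p$ together with Lemma~\ref{etauu} for the first term and $(\C_1+\C_2)\C^{-1}(\C_1+\C_2)\preceq \C_1+\C_2$ (a consequence of $\C\succeq\C_1+\C_2$, in the spirit of \eqref{eta:SSPD}) for the second---but this identity must be used for the \emph{discrete} error with the canonical commuting flux interpolant; writing it for $\bm v-\bm v_h$, as your last paragraph suggests, would reintroduce both the data oscillation $g_i-Q_{i,h}g_i$ and the pressure approximation error. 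Third, since your route forces the canonical flux interpolant, the infimum over $\bm z_h$ in \eqref{eq:erroruv} requires a final (standard) quasi-optimality remark for that interpolant. In exchange, your argument is more elementary---\eqref{eq:erroruv} uses only coercivity and strong mass conservation rather than the discrete inf-sup theorem---and it makes explicit the mechanism by which mass conservation decouples the pressure, whereas the paper's route is shorter and delegates all parameter-uniformity bookkeeping to Theorem~\ref{Dis:Stability}.
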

\begin{proof}
Subtracting \eqref{eq:75}--\eqref{eq:77} from \eqref{eq:8a}--\eqref{eq:8c} and noting the consistency of $ a_h(\cdot,\cdot)$, we have that for any $(\boldsymbol w_h; \boldsymbol z_h; \bm q_h)\in \boldsymbol U_h\times\boldsymbol V_h\times \bm P_h$
\begin{eqnarray}
 a_h(\bm u-\bm u_h,\bm w_h) +\lambda ( \div (\boldsymbol u- \boldsymbol u_h), \div \boldsymbol  w_h)- ( \sum _{i=1}^n(p_i-p_{i,h}), \div \boldsymbol  w_h)&=&0,\label{eq:error1}\\
(R_i^{-1}(\boldsymbol v_i- \boldsymbol v_{i,h}), \boldsymbol  z_{i,h})- (p_i-p_{i,h}, \div \boldsymbol  z_{i,h})&=& 0,~~~i=1,\dots, n,\label{eq:error2} \\
\nonumber-( \div (\boldsymbol u- \boldsymbol u_h),q_{i,h})  -( \div (\boldsymbol v_i- \boldsymbol v_{i,h}),q_{i,h})   +\tilde{\alpha}_{ii} (p_{i}-p_{i,h},q_{i,h})\\
+\sum_{\substack{j=1\\j\neq i}}^{n}\alpha_{ij}(p_{j}-p_{j,h},q_{i,h})&=&0,~~~i=1,\dots, n. \label{eq:error3}
\end{eqnarray}

Let $\boldsymbol u_I=\Pi_{B}^{\div}\boldsymbol u\in \boldsymbol U_h, p_{i,I}=Q_{i,h} p_i\in P_{i,h}$.
Now for arbitrary $\boldsymbol v_{i,I}\in \bm V_{i,h}$,
from~\eqref{eq:error1}--\eqref{eq:error3}, noting that $\div\Pi_{B}^{\div}=Q_{i,h}\div$
and $\div \boldsymbol U_h=\div \boldsymbol V_{i,h}=P_{i,h}$, we conclude
\begin{equation*}
\begin{split}
 \nonumber a_h(\bm u_I-\bm u_h,\bm w_h) +\lambda ( \div (\boldsymbol u_I- \boldsymbol u_h), \div \boldsymbol  w_h)
 - \sum_{i=1}^n(p_{i,I}-p_{i,h}, \div \boldsymbol  w_h)&=a_h(\bm u_I-\bm u,\bm w_h),\\
\nonumber (R_i^{-1}(\boldsymbol v_{i,I}- \boldsymbol v_{i,h}), \boldsymbol  z_{i,h})- (p_{i,I}-p_{i,h}, \div \boldsymbol  z_{i,h})&=(R_i^{-1}(\boldsymbol v_{i,I}- \boldsymbol v_i), \boldsymbol  z_{i,h}),i=1,\dots,n, \\
\nonumber-( \div (\boldsymbol u_I- \boldsymbol u_h),q_{i,h}) -( \div (\boldsymbol v_{i,I}- \boldsymbol v_{i,h}),q_{i,h}) 
+ \tilde{\alpha}_{ii} (p_{i,I}-p_{i,h},q_{i,h})
\nonumber\\+\sum_{\substack{j=1\\j\neq i}}^{n}\alpha_{ij}(p_{j,I}-p_{j,h},q_{i,h})&=-(\div (\boldsymbol v_{i,I}- \boldsymbol v_i),q_{i,h}),i=1,\dots,n.
\end{split}
\end{equation*}
Next, since $(\bm u_I-\bm u_h) \in \bm U_h$, $(\bm v_I-\bm v_h) \in \bm V_h$, $(\bm p_I-\bm p_h) \in \bm P_h$,
by the stability result~\eqref{eq:80} for the discrete problem~\eqref{eq:75}--\eqref{eq:77}, we obtain
\begin{equation*}
\begin{split}
&\|\bm u_I-\bm u_h\|_{\bm U_h}+\|\bm v_I-\bm v_h\|_{\bm V}\\
&\leq C_e \Big( \sup\limits_{\boldsymbol w_h\in \boldsymbol U_h}\frac{a_h(\bm u_I-\bm u,\bm w_h)}{\|\bm w_h\|_{\bm U_h}}+
\sup\limits_{\boldsymbol z_h\in \boldsymbol V_h}\frac{
\displaystyle\sum_{i=1}^n\big(R_i^{-1}(\boldsymbol v_{i,I}- \boldsymbol v_i), \boldsymbol  z_{i,h}\big)}{\|\bm z_h\|_{\bm V}}+\sup\limits_{\bm q_h\in \bm P_h}\frac{(\Divv (\boldsymbol v- \boldsymbol v_I),\bm q_h)}{\|\bm q_h\|_{\bm P}}\Big),
\end{split}
\end{equation*}
\begin{equation*}
\|\bm p_I-\bm p_h\|_P\leq C_e  \Big(\sup\limits_{\boldsymbol w_h\in \boldsymbol U_h}\frac{a_h(\bm u_I-\bm u,\bm w_h)}{\|\bm w_h\|_{\bm U_h}}+\sup\limits_{\boldsymbol z_h\in \boldsymbol V_h}
\frac{\displaystyle\sum_{i=1}^n\big(R_i^{-1}(\boldsymbol v_{i,I}- \boldsymbol v_i), \boldsymbol  z_{i,h}\big)}{\|\bm z_h\|_{\bm V}}+\sup\limits_{\bm q_h\in \bm P_h}\frac{(\Divv (\boldsymbol v- \boldsymbol v_I),\bm q_h)}{\|\bm q_h\|_{\bm P}}\Big).
\end{equation*}
Using the boundedness of $a_h(\cdot, \cdot)$, {the second inequality in Lemma \ref{stable:Interpolation}}, the triangle inequality
and noting that $\bm v_I$ is arbitrary and $(\Divv (\boldsymbol v- \boldsymbol v_I),\bm q_h)\le \|\boldsymbol v- \boldsymbol v_I\|_{\bm V}\|\bm q_h\|_{\bm P}$, we have that
\begin{equation} \label{eq:error9}
\|\bm u-\bm u_h\|_{\bm U_h}+\|\bm v-\bm v_h\|_{\bm V}\leq C_{e,u}  \inf\limits_{\boldsymbol w_h\in \boldsymbol U_h, \bm z_h\in \bm V_h}\Big(\|\bm u-\bm w_h\|_{\bm U_h}+\|\bm v-\bm z_h\|_{\bm V}\Big),
\end{equation}
and 
\begin{equation} \label{eq:error10}
\|\bm p-\bm p_h\|_P\leq C_{e,p}  \inf\limits_{\boldsymbol w_h\in \boldsymbol U_h, \bm z_h\in \bm V_h, \bm q_h\in {\bm P_h}}\Big(\|\bm u-\bm w_h\|_{\bm U_h}+\|\bm v-\bm z_h\|_{\bm V}+\|\bm p-\bm q_h\|_{\bm P}\Big).
\end{equation}
\end{proof}
\begin{remark}
From the above theorem, we can see that the discretizations are locking-free.
\end{remark}

\section{Numerical Experiments}\label{sec:NumericalExperiments}

The following numerical experiments are for three widely applied MPET models, namely the one-network, two-network and
four-network models. 
We suppose that the domain $\Omega$ is the unit square in $\mathbb{R}^2$ and during the discretization it has been {partitioned as bisections of $2N^2$ triangles 
with mesh size $h=1/N$}. 
To discretize the pressure variables we use discontinuous piecewise constant elements, the fluxes are discretized employing the lowest-order Raviart-Thomas space and the displacement we 
approximate with the Brezzi-Douglas-Marini elements of lowest order. All the numerical tests included in this section have been carried out in FEniCS, \cite{AlnaesBlechta2015a,LoggMardalEtAl2012a}. 
The aim of these experiments is:
\begin{itemize}
\item[(i)] to validate the convergence of the error estimates in the derived parameter-dependent norms;
\item[(ii)] to test the robustness of the proposed block-diagonal preconditioners by using it within the MinRes algorithm.
\end{itemize}

\subsection{The one network model}

Here we consider the simplest case of a system with only one pressure and one flux, i.e., the Biot's consolidation model. We solve system~\eqref{eq:6}
for 
$$
\bf=\left(
\begin{array}{c}
-(2y^3-3y^2+y)(12x^2-12x+2)-(x-1)^2x^2(12y-6)+900(y-1)^2y^2(4x^3-6x^2+2x)\\
~~(2x^3-3x^2+x)(12y^2-12y+2)+(y-1)^2y^2(12x-6)+900(x-1)^2x^2(4y^3-6y^2+2y)
\end{array}
\right)
$$
and 
$$g=R_1\left(\frac{\partial \phi_2}{\partial x}+\frac{\partial \phi_2}{\partial y}\right)-\alpha_{p_1}(\phi_2-1),
$$
where $(x,y)\in \Omega$ and $\phi_1=(x-1)^2(y-1)^2 x^2 y^2$,  $\phi_2=900(x-1)^2(y-1)^2 x^2 y^2$. 

Then the exact solution of system~\eqref{eq:6} with boundary conditions
$
\boldsymbol u|_{\partial\Omega}=0,\; \boldsymbol v\cdot \boldsymbol n|_{\partial\Omega}=0
$
is given by
$
\boldsymbol u=\left(\frac{\partial \phi_1}{\partial y},-\frac{\partial \phi_1}{\partial x} \right),\; p=\phi_2-1,\; \boldsymbol v=-R_1\nabla p
$
and $p\in L^2_0(\Omega)$.

We performed experiments with different sets of input parameters. In Tables~\ref{tab1}--\ref{tab3} we report the error of the
numerical solution in the introduced parameter-dependent norms
$\Vert \cdot \Vert_{\bm P}$, $\Vert \cdot \Vert_{\bm V}$, $\Vert \cdot \Vert_{\bm U_h}$. Additionally, we list the number of
MinRes iterations $n_{it}$ and average residual convergence factor with the proposed block-diagonal preconditioner where the
stopping criterion is residual reduction by $10^8$ in the norm induced by the preconditioner.
The robustness of the method is validated with respect to variation of the parameters $\lambda$, $R_1^{-1}$, $\alpha_{p_1}$, 
as introduced in~\eqref{eq:6}, and the discretization parameter $h$. 

\begin{table}[h!]
\caption{Errors measured in parameter-dependent norms ($\alpha_{p_1}=10^{-4}$, $\lambda=10^4$).}
\label{tab1}
\begin{center}
\begin{tabular}{c|c|c|c|c|c|c|c}
& &  \multicolumn{6}{c}{$R_{1}^{-1}$} \\ [0.7ex] \hline
$h$ & &  1E0 & 1E2 & 1E3 & 1E4 & 1E8  & 1E16 \\ [0.7ex] \hline
\multirow{3}{*}{$\displaystyle \frac{1}{8}$} & $\Vert \cdot\Vert_{\bm P}$ & 2.1E--1 & 2.1E--2 & 6.6E--3 & 2.1E--3 & 2.0E--3 & 2.0E--3  \\ [0.7ex]
& $\Vert \cdot\Vert_{\bm V}$ & 1.3E1 & 1.3E0 & 4.1E--1 & 1.3E--1 & 1.6E--4 & 1.6E--8  \\ [0.7ex]
& $\Vert \cdot\Vert_{\bm U_h}$ & 9.1E--2 & 9.1E--2 & 9.1E--2 & 9.1E--2 & 9.1E--2 & 9.1E--2  \\[0.7ex] \hline
\multirow{3}{*}{$\displaystyle \frac{1}{16}$} & $\Vert \cdot\Vert_{\bm P}$ & 1.0E--1 & 1.0E--2 & 3.3E--3 & 1.0E--3 & 1.0E--3 & 1.0E--3  \\ [0.7ex]
& $\Vert \cdot\Vert_{\bm V}$ & 6.6E0 & 6.6E--1 & 2.1E--1 & 6.6E--2 & 8.3E--5 & 8.3E--9  \\ [0.7ex]
& $\Vert \cdot\Vert_{\bm U_h}$ & 4.5E--2 & 4.5E--2 & 4.5E--2 & 4.5E--2 & 4.5E--2 & 4.5E--2  \\ [0.7ex] \hline
\multirow{3}{*}{$\displaystyle \frac{1}{32}$} & $\Vert \cdot\Vert_{\bm P}$ & 5.2E--2 & 5.1E--3 & 1.6E--3 & 5.1E--4 & 5.1E--4 & 5.2E--4  \\ [0.7ex]
& $\Vert \cdot\Vert_{\bm V}$ & 3.3E0 & 3.3E--1 & 1.0E--1 & 3.3E--2 & 4.4E--5 & 4.4E--9  \\ [0.7ex]
& $\Vert \cdot\Vert_{\bm U_h}$ & 2.3E--2 & 2.3E--2 & 2.3E--2 & 2.3E--2 & 2.3E--2 & 2.3E--2  \\ [0.7ex]  \hline
\multirow{3}{*}{$\displaystyle \frac{1}{64}$} & $\Vert \cdot\Vert_{\bm P}$ & 2.6E--2 & 2.6E--3 & 8.2E--4 & 2.6E--4 & 2.6E--4 & 2.6E--4  \\ [0.7ex]
& $\Vert \cdot\Vert_{\bm V}$ & 1.7E0 & 1.7E--1 & 5.2E--2 & 1.7E--2 & 2.3E--5 & 2.3E--9  \\ [0.7ex]
& $\Vert \cdot\Vert_{\bm U_h}$ & 1.1E--2 & 1.1E--2 & 1.1E--2 & 1.1E--2 &  1.1E--2 & 1.1E--2   \\  [0.7ex] \hline
\multirow{3}{*}{$\displaystyle \frac{1}{128}$} & $\Vert \cdot\Vert_{\bm P}$ & 1.3E--2 & 1.3E--3 & 4.1E--4 & 1.3E--4 & 1.3E--4 & 1.3E--4  \\ [0.7ex]
& $\Vert \cdot\Vert_{\bm V}$ & 8.2E--1 & 8.2E--2 & 2.6E--2 & 8.2E--3 & 1.2E--5 & 1.2E--9  \\[0.7ex]
& $\Vert \cdot\Vert_{\bm U_h}$ & 5.6E--3 & 5.6E--3 & 5.6E--3 & 5.6E--3 & 5.6E--3 & 5.6E--3  \\[0.7ex] \hline
\multirow{3}{*}{$\displaystyle \frac{1}{256}$} & $\Vert \cdot\Vert_{\bm P}$ & 6.6E--3 & 6.6E--4 & 2.1E--4 & 6.6E--5 & 6.6E--5 & 6.6E--5  \\ [0.7ex]
& $\Vert \cdot\Vert_{\bm V}$ & 4.1E--1 & 4.1E--2 & 1.3E--2 & 4.1E--3 & 6.1E--6 & 6.1E--10  \\[0.7ex]
& $\Vert \cdot\Vert_{\bm U_h}$ & 2.8E--3 & 2.8E--3 & 2.8E--3 & 2.8E--3 & 2.8E--3 & 2.8E--3
\end{tabular}
\end{center}
\end{table}
\begin{table}[h!]
\caption{Errors measured in parameter-dependent norms ($\alpha_{p_1}=0$, $R_1^{-1}=10^8$).}
\label{tab2}
\begin{center}
\begin{tabular}{c|c|c|c|c}
& &  \multicolumn{3}{c}{$\lambda$} \\[0.7ex] \hline
$h$ & &  1E0 & 1E4 & 1E8  \\ [0.7ex] \hline
\multirow{3}{*}{$\displaystyle \frac{1}{8}$} & $\Vert \cdot\Vert_{\bm P}$ & 2.0E--1 & 2.0E--3 & 2.1E--5   \\[0.7ex]
& $\Vert \cdot\Vert_{\bm V}$ & 1.6E--4 & 1.6E--4 & 1.3E--3   \\[0.7ex]
& $\Vert \cdot\Vert_{\bm U_h}$ & 9.1E--2 & 9.1E--2 & 9.1E--2   \\ [0.7ex]\hline
\multirow{3}{*}{$\displaystyle \frac{1}{16}$} & $\Vert \cdot\Vert_{\bm P}$ & 1.0E--1 & 1.0E--3 & 1.0E--5   \\[0.7ex]
& $\Vert \cdot\Vert_{\bm V}$ & 8.9E--5 & 8.6E--5 & 6.5E--4   \\ [0.7ex]
& $\Vert \cdot\Vert_{\bm U_h}$ & 4.5E--2 & 4.5E--2 & 4.5E--2   \\[0.7ex] \hline
\multirow{3}{*}{$\displaystyle \frac{1}{32}$} & $\Vert \cdot\Vert_{\bm P}$ & 5.2E--2 & 5.2E--4 & 5.2E--6   \\[0.7ex]
& $\Vert \cdot\Vert_{\bm V}$ & 5.7E--5 & 4.5E--5 & 3.3E--4   \\[0.7ex]
& $\Vert \cdot\Vert_{\bm U_h}$ & 2.3E--2 & 2.3E--2 & 2.3E--2   \\[0.7ex] \hline
\multirow{3}{*}{$\displaystyle \frac{1}{64}$} & $\Vert \cdot\Vert_{\bm P}$ & 2.6E--2 & 2.6E--4 & 2.6E--6   \\[0.7ex]
& $\Vert \cdot\Vert_{\bm V}$ & 4.6E--5 & 2.3E--5 & 1.6E--4   \\[0.7ex]
& $\Vert \cdot\Vert_{\bm U_h}$ & 1.1E--2 & 1.1E--2 &  1.1E--2   \\ [0.7ex] \hline
\multirow{3}{*}{$\displaystyle \frac{1}{128}$} & $\Vert \cdot\Vert_{\bm P}$ & 1.3E--2 & 1.3E--4 & 1.3E--6   \\ [0.7ex]
& $\Vert \cdot\Vert_{\bm V}$ & 4.3E--5 & 1.2E--5 & 8.2E--5   \\ [0.7ex]
& $\Vert \cdot\Vert_{\bm U_h}$ & 5.6E--3 & 5.6E--3 & 5.6E--3   \\ [0.7ex] \hline
\multirow{3}{*}{$\displaystyle \frac{1}{256}$} & $\Vert \cdot\Vert_{\bm P}$ & 6.6E--3 & 6.6E--5 & 6.6E--7   \\ [0.7ex]
& $\Vert \cdot\Vert_{\bm V}$ & 4.1E--5 & 6.1E--6 & 4.1E--5   \\ [0.7ex]
& $\Vert \cdot\Vert_{\bm U_h}$ & 2.8E--3 & 2.8E--3 & 2.8E--3  
\end{tabular}
\end{center}
\end{table}

\begin{table}[h!]
\caption{Errors measured in parameter-dependent norms ($R_1^{-1}=10^{4}$, $\lambda=10^0$).}
\label{tab3}
\begin{center}
\begin{tabular}{c|c|c|c|c|c}
& &  \multicolumn{4}{c}{$\alpha_{p_1}$} \\[0.7ex] \hline
$h$ & &  1E0 & 1E--4 & 1E--8 &  0  \\ [0.7ex]\hline
\multirow{3}{*}{$\displaystyle \frac{1}{8}$} & $\Vert \cdot\Vert_{\bm P}$ & 2.0E--1 & 2.0E--1 & 2.0E--1 & 2.0E--1  \\[0.7ex]
& $\Vert \cdot\Vert_{\bm V}$ & 1.6E--2 & 1.6E--2 & 1.6E--2 & 1.6E--2   \\ [0.7ex]
& $\Vert \cdot\Vert_{\bm U_h}$ & 9.0E--2 & 9.1E--2 & 9.1E--2 & 9.1E--2   \\[0.7ex] \hline
\multirow{3}{*}{$\displaystyle \frac{1}{16}$} & $\Vert \cdot\Vert_{\bm P}$ & 1.0E--1 & 1.0E--1 & 1.0E--1 & 1.0E--1   \\[0.7ex]
& $\Vert \cdot\Vert_{\bm V}$ & 8.1E--3 & 8.3E--3 & 8.3E--3 & 8.3E--3   \\[0.7ex]
& $\Vert \cdot\Vert_{\bm U_h}$ & 4.5E--2 & 4.5E--2 & 4.5E--2 & 4.5E--2   \\[0.7ex] \hline
\multirow{3}{*}{$\displaystyle \frac{1}{32}$} & $\Vert \cdot\Vert_{\bm P}$ & 5.2E--2 & 5.2E--2 & 5.2E--2 & 5.2E--2   \\[0.7ex]
& $\Vert \cdot\Vert_{\bm V}$ & 4.1E--3 & 4.2E--3 & 4.2E--3 & 4.2E--3   \\ [0.7ex]
& $\Vert \cdot\Vert_{\bm U_h}$ & 2.2E--2 & 2.2E--2 & 2.2E--2 & 2.2E--2   \\ [0.7ex] \hline
\multirow{3}{*}{$\displaystyle \frac{1}{64}$} & $\Vert \cdot\Vert_{\bm P}$ & 2.6E--2 & 2.6E--2 & 2.6E--2 & 2.6E--2   \\[0.7ex]
& $\Vert \cdot\Vert_{\bm V}$ & 2.0E--5 & 2.1E--3 & 2.1E--3 & 2.1E--3   \\ [0.7ex]
& $\Vert \cdot\Vert_{\bm U_h}$ & 1.1E--2 & 1.1E--2 &  1.1E--2 & 1.1E--2   \\ [0.7ex] \hline
\multirow{3}{*}{$\displaystyle \frac{1}{128}$} & $\Vert \cdot\Vert_{\bm P}$ & 1.3E--2 & 1.3E--3 & 1.3E--3 & 1.3E--3   \\[0.7ex]
& $\Vert \cdot\Vert_{\bm V}$ & 1.0E--5 & 1.0E--5 & 1.0E--5 & 1.0E--5   \\ [0.7ex]
& $\Vert \cdot\Vert_{\bm U_h}$ & 5.6E--3 & 5.6E--3 & 5.6E--3 & 5.6E--3   \\[0.7ex] \hline
\multirow{3}{*}{$\displaystyle \frac{1}{256}$} & $\Vert \cdot\Vert_{\bm P}$ & 6.6E--3 & 6.6E--4 & 6.6E--4 & 6.6E--4   \\ [0.7ex]
& $\Vert \cdot\Vert_{\bm V}$ & 5.1E--6 & 5.1E--6 & 5.1E--6 & 5.1E--6   \\[0.7ex]
& $\Vert \cdot\Vert_{\bm U_h}$ & 2.8E--3 & 2.8E--3 & 2.8E--3 & 2.8E--3
\end{tabular}
\end{center}
\end{table}

\begin{table}[h!]
\caption{Preconditioned MinRes convergence history for solving the Biot problem.}\label{tab4}
\begin{center}
\begin{tabular}{c|c|c|rr|rr|rr|rr|rr|rr}
& & & \multicolumn{10}{c}{$R_1^{-1}$} \\[1ex]\hline
$h$ & $\alpha_p$ & $\lambda$ & \multicolumn{2}{c}{1E0} &  \multicolumn{2}{c}{1E2} &  \multicolumn{2}{c}{1E3} &  \multicolumn{2}{c}{1E4} &  \multicolumn{2}{c}{1E8} &  \multicolumn{2}{c}{1E16} \\ [0.1ex] \hline 
\multirow{12}{*}{$\displaystyle \frac{1}{16}$} & 
\multirow{3}{*}{1E0}  & 1E0 & 19 & 0.37 & 27 & 0.50 & 26 & 0.49 & 19 & 0.38 & 13 & 0.24 &  13 & 0.24 \\[0.1ex]
                               & & 1E4  & 10 & 0.15 & 20 & 0.39 & 19 & 0.38 & 13 & 0.23 & 4 & $<$0.01 & 3 & $<$0.01 \\[0.1ex]
                               & & 1E8 & 10 &  0.11 & 20 & 0.39 & 19 & 0.38 & 13 & 0.23 & 4 & $<$0.01  & 3 & $<$0.01 \\ [0.1ex]\cline{2-15}
& \multirow{3}{*}{1E-4}  & 1E0 & 19 & 0.38 & 35 & 0.58 & 43 & 0.65 & 34 & 0.50 & 19 & 0.29  & 19 & 0.36 \\[0.1ex]
                                    & & 1E4  & 9 & 0.08 & 10 & 0.11 & 12 & 0.17 & 13 & 0.23 & 17 & 0.31  & 5& 0.01 \\[0.1ex]
                                    & & 1E8 & 6 & 0.05 & 8 & 0.07 & 9 & 0.10 & 10 & 0.14 & 11 & 0.18  & 3 & $<$0.01 \\[0.1ex] \cline{2-15}
& \multirow{3}{*}{1E-8}  & 1E0 & 19 & 0.38 & 35 & 0.58 & 43 & 0.65 & 34 & 0.50 & 19 & 0.29  & 19& 0.36 \\[0.1ex]
                                   & & 1E4  & 9 & 0.08 & 10 & 0.11 & 12 & 0.17 & 14 & 0.23 & 20 & 0.31  & 5& 0.01 \\[0.1ex]
                                  & & 1E8 & 6 & 0.05 & 8 & 0.07 & 8 & 0.07 & 9 & 0.08 & 13 & 0.24  & 4& 0.01 \\[0.1ex] \cline{2-15}
& \multirow{3}{*}{0}  & 1E0 & 19 & 0.38 & 35 & 0.58 & 43 & 0.65 & 34 & 0.50 & 19 & 0.29  & 19& 0.36 \\[0.1ex]
                                   & & 1E4  & 9 & 0.08 & 10 & 0.11 & 12 & 0.17 & 14 & 0.23 & 20 & 0.31  & 5 & 0.01 \\[0.1ex]
                                   & & 1E8 & 6 & 0.05 & 8 & 0.07 & 8 & 0.07 & 9 & 0.08 & 13 & 0.24 & 4 & 0.01 \\[0.1ex] \hline
\multirow{12}{*}{$\displaystyle \frac{1}{64}$} &                                   
                                   \multirow{3}{*}{1E0}  & 1E0 & 18 & 0.35 & 27 & 0.49 & 28 & 0.51 & 25 & 0.47 & 12 & 0.20  & 12 & 0.20 \\[0.1ex]
                               & & 1E4  & 9 & 0.12 & 19 & 0.36 & 20 & 0.39 & 16 & 0.30 & 4 & $<$0.01 &  3 & $<$0.01 \\[0.1ex]
                               & & 1E8 & 8 & 0.09 & 19 & 0.36 & 20 & 0.39 & 16 & 0.30 & 4 & $<$0.01 &  3 & $<$0.01 \\[0.1ex] \cline{2-15}
& \multirow{3}{*}{1E-4}  & 1E0 & 19 & 0.36 & 34 & 0.57 & 46 & 0.66 & 47 & 0.61 & 20& 0.39 &  19 & 0.37 \\[0.1ex]
                                    & & 1E4  & 8 & 0.09 & 10 & 0.11 & 12 & 0.17 & 13 & 0.21 & 21 & 0.40 & 5 & 0.01 \\ [0.1ex]
                                    & & 1E8 & 6 & 0.03 & 7 & 0.06 & 8 & 0.09 & 9 & 0.12 & 14 & 0.26 & 3 & $<$0.01 \\[0.1ex] \cline{2-15}
& \multirow{3}{*}{1E-8}  & 1E0 & 19 & 0.36 & 34 & 0.57 & 46 & 0.66 & 47 & 0.61 & 20 & 0.39  & 19 & 0.37 \\ [0.1ex]
                                   & & 1E4  & 8 & 0.09 & 10 & 0.11 & 12 & 0.17 & 13 & 0.21 & 26 & 0.49 & 5 & 0.01 \\[0.1ex]
                                  & & 1E8 & 6 & 0.03 & 7 & 0.06 & 7 & 0.06 & 8 & 0.09 & 13 & 0.22 &  4 & 0.01 \\[0.1ex] \cline{2-15}
& \multirow{3}{*}{0}  & 1E0 & 19 & 0.36 & 34 & 0.57 & 46 & 0.66 & 47 & 0.61 & 20 & 0.39  & 19 & 0.37 \\[0.1ex]
                                   & & 1E4  & 8 & 0.09 & 10 & 0.11 & 12 & 0.17 & 13 & 0.21 & 26 & 0.49  & 5 & 0.01 \\[0.1ex]
                                   & & 1E8 & 6 & 0.03 & 7 & 0.06 & 7 & 0.06 & 8 & 0.09 & 13 & 0.22  & 4 & 0.01 \\[0.1ex] \hline
\multirow{12}{*}{$\displaystyle \frac{1}{256}$} &                                   
                                   \multirow{3}{*}{1E0}  & 1E0 & 18 & 0.34 & 27 & 0.49 & 28 & 0.51 & 25 & 0.49 &  12 & 0.20 & 12 & 0.20 \\[0.1ex]
                               & & 1E4  & 9 & 0.11 & 19 & 0.36 & 20 & 0.39 & 16 & 0.31 & 4 & $<$0.01 &  3 & $<$0.01 \\[0.1ex]
                               & & 1E8 & 9 & 0.11 & 19 & 0.36 & 20 & 0.39 & 16 & 0.31 & 4 & $<$0.01 &  3 & $<$0.01 \\[0.1ex] \cline{2-15}
& \multirow{3}{*}{1E-4}  & 1E0 & 19 & 0.34 & 32 & 0.56 & 44 & 0.66 & 47 & 0.67 & 22& 0.45 &  21 & 0.37 \\[0.1ex]
                                    & & 1E4  & 8 & 0.08 & 9 & 0.11 & 11 & 0.19 &13 & 0.21 & 20 & 0.40 & 5 & 0.01 \\[0.1ex]
                                    & & 1E8 & 6 & 0.03 & 7 & 0.05 & 8 & 0.08 & 9 & 0.11 & 14 & 0.26 &  4 & 0.01 \\ [0.1ex]\cline{2-15}
& \multirow{3}{*}{1E-8}  & 1E0 & 19 & 0.34 & 32 & 0.56 & 44 & 0.66 & 47 & 0.67 & 22 & 0.40  & 21 & 0.37 \\ [0.1ex]
                                   & & 1E4  & 8 & 0.08 & 9  & 0.11 & 11 & 0.19 &13 & 0.21 & 26 & 0.49 &  5 & 0.01 \\ [0.1ex]
                                  & & 1E8 & 6 & 0.03 & 7 & 0.05 &  8 & 0.08  & 8 & 0.08 & 12 & 0.20 &  4 & 0.01 \\ [0.1ex] \cline{2-15}
& \multirow{3}{*}{0}  & 1E0 & 19 & 0.34 & 32 & 0.56 & 44 & 0.66 & 47 & 0.67 & 22 & 0.40 & 21 & 0.37 \\ [0.1ex]
                                   & & 1E4  & 8 & 0.08 & 10 & 0.11 & 11 & 0.19 & 13 & 0.21 & 26 & 0.49  & 5 & 0.01 \\ [0.1ex]
                                   & & 1E8 & 6 & 0.03 & 7 & 0.05 & 8 & 0.08 & 8 & 0.08 & 12 & 0.20 & 4 & 0.01                                
\end{tabular}
\end{center}
\end{table}
As can be seen from Tables~\ref{tab1}--\ref{tab3} the error in the considered parameter-dependent norms decreases by a factor $2$ when
decreasing the mesh size by the same factor independently of the model parameters.
The results in Table~\ref{tab4} suggest that the number of MinRes iterations required to achieve a prescribed solution accuracy is bounded by a constant 
independent of $\lambda$, $R_1^{-1}$, $\alpha_{p_1}$ and $h$ while the average residual reduction factor always remains smaller than $0.70$. Note 
that in this table the authors have tried to present the most unfavourable setting of input parameters in order to stress test the proposed method.

\subsection{The two-network model}

The governing partial differential equations of the Biot-Barenblatt model in which the flux-based MPET system involves two
pressures and two fluxes are given by

\begin{subequations}
\begin{align}
-{\rm div} (\bm \sigma-p_1\bm I-p_2\bm I ) &= \bm f ,\\
R^{-1}_i\bm v_i + \nabla p_i &= 0, \ \quad i=1,2, \\
-{\rm div} \bm u - {\rm div}\bm v_i -\alpha_{p_i}p_i + \sum_{\substack{j=1\\j\neq i}}^2\alpha_{ij} p_j &= g_i, \quad i=1,2 .
\end{align}
\label{eq:beweak}
\end{subequations}

\noindent
We consider here the cantilever bracket benchmark problem proposed by the National Agency for Finite Element Methods and
Standards in~\cite{NAFEMS1990} with $\boldsymbol f=0$, $g_1=0$ and $g_2=0$.\\

The boundary of the domain $\Omega=[0,1]^2$ is split into $\Gamma_1$, $\Gamma_2$, $\Gamma_3$ and $\Gamma_4$
denoting the bottom, right, top and left boundaries respectively, and  the boundary conditions $\boldsymbol u=0$ on $\Gamma_4$,
$(\boldsymbol {\sigma} -p_1\boldsymbol I-p_2\boldsymbol I)\boldsymbol n=(0,0)^T$ on $\Gamma_1\cup\Gamma_2$, 
$(\boldsymbol {\sigma} -p_1\boldsymbol I-p_2\boldsymbol I)\boldsymbol n=(0,-1)^T$ on $\Gamma_3$, $p_1=2$ on
$\Gamma$ $p_2=20$ on $\Gamma$ are imposed.

The base values of the model parameters are taken from~\cite{Kolesov2017} and are presented in Table~\ref{parameters_Barenblatt}. 
The computed numerical results in Table~\ref{tab:5} show robust behaviour with respect to mesh refinements and variation of the parameters
including high contrasts of the hydraulic conductivities. 
The parameter $K_2$ has been varied over a wider range than $K_1$ as it appeared to be the more interesting case.
\begin{table}[h!]
\caption{Base values of model parameters for the Barenblatt model.}
\label{parameters_Barenblatt}
\begin{center}
\begin{tabular}{ccc}
\hline 
\cellcolor{gray!35} parameter & \cellcolor{gray!20} value & \cellcolor{gray!5} unit \\ \hline
$\lambda$ & $4.2$ & MPa \\
$\mu$ & $2.4$ & MPa \\
$c_{p_1}$ & $54$ & (GPa)$^{-1}$ \\
$c_{p_2}$ & $14$ & (GPa)$^{-1}$\\
$\alpha_{1}$ & $0.95$ & \\
$\alpha_{2}$ & $0.12$ & \\
\multirow{2}{*}{$\beta$} & $5$ & $10^{-10}$kg/(m$\cdot$s) \\
& $100$ & $10^{-10}$kg/(m$\cdot$s) \\
$K_1$ & $6.18$ & $10^{-15}$m$^2$ \\
$K_2$ & $27.2$ & $10^{-15}$m$^2$
\end{tabular}
\end{center}
\end{table}
\begin{table}[hbtp!]
\caption{Preconditioned MinRes convergence history for solving the Barenblatt problem.}
\label{tab:5}
\begin{center}
\begin{tabular}{c|c||c|rr|rr|rr}
$h$ & $\beta$ &   & \multicolumn{2}{c}{$K_1\cdot 10^{-2}$} &  \multicolumn{2}{c}{$K_1\cdot 10^{-1}$} &  \multicolumn{2}{c}{$K_1$}  \\ \hline \hline
\multirow{8}{*}{$\displaystyle \frac{1}{16}$} & 
\multirow{4}{*}{5E--10}  & $K_2$ & 16 & 0.31 & 16 & 0.31 & 16 & 0.31  \\
                               & & $K_2\cdot 10^{2}$ & 21 & 0.41 & 21 & 0.41 & 21 & 0.41  \\
                               & & $K_2\cdot 10^{4}$ & 37 & 0.61 & 37 & 0.61 & 37 & 0.61 \\
                                & & $K_2\cdot 10^{6}$ & 29 & 0.51 & 29 & 0.51 & 29 & 0.51 \\ \cline{2-9}
& \multirow{4}{*}{1E-8}  & $K_2$ & 16 & 0.31 & 16 & 0.31 & 16 & 0.31   \\
                                     & & $K_2\cdot 10^{2}$ & 21 & 0.41 & 21 & 0.41 & 21 & 0.41  \\ 
                                    & & $K_2\cdot 10^{4}$ & 37 & 0.61 & 37 & 0.61 & 37 & 0.61 \\
                                    & & $K_2\cdot 10^{6}$ & 29 & 0.51 & 29 & 0.51 & 29 & 0.51  \\ \hline
\multirow{8}{*}{$\displaystyle \frac{1}{64}$} &                                   
                                   \multirow{4}{*}{5E--10}  & $K_2$ & 18 & 0.33 & 18 & 0.33 & 18 & 0.33 \\
                               & & $K_2\cdot 10^{2}$ & 32 & 0.55 & 32 & 0.55 & 32 & 0.55  \\
                                & & $K_2\cdot 10^{4}$ & 38 & 0.61 & 38 & 0.61 & 38 & 0.61  \\ 
                                 & & $K_2\cdot 10^{6}$ & 27 & 0.49 & 27 & 0.49 & 27 & 0.49  \\  \cline{2-9}
& \multirow{4}{*}{1E-8}  & $K_2$ & 18 & 0.33 & 18 & 0.33 & 18 & 0.33  \\
                                   & & $K_2\cdot 10^{2}$ & 32 & 0.55 & 32 & 0.55 & 32 & 0.55  \\ 
                                    & & $K_2\cdot 10^{4}$ & 38 & 0.61 & 38 & 0.61 & 38 & 0.61  \\ 
                                    & & $K_2\cdot 10^{6}$ & 27 & 0.49 & 27 & 0.49 & 27 & 0.49  \\ \hline
\multirow{8}{*}{$\displaystyle \frac{1}{256}$} &                                   
                                   \multirow{4}{*}{5E--10}  & $K_2$ & 22 & 0.43 & 22 & 0.43 & 22 & 0.43  \\ 
                               & & $K_2\cdot 10^{2}$ & 35 & 0.58 & 35 & 0.58 & 35 & 0.58  \\ 
                               & & $K_2\cdot 10^{4}$ & 37 & 0.60 & 37 & 0.60 & 37 & 0.60  \\ 
                              & & $K_2\cdot 10^{6}$ & 27 & 0.48 & 27 & 0.48 & 27 & 0.48  \\ \cline{2-9}
& \multirow{4}{*}{1E--8}  & $K_2$ & 22 & 0.43 & 22 & 0.43 & 22 & 0.43  \\
                                    & & $K_2\cdot 10^{2}$ & 35 & 0.58 & 35 & 0.58 & 35 & 0.58  \\  
                                     & & $K_2\cdot 10^{4}$ & 37 & 0.60 & 37 & 0.60 & 37 & 0.60  \\ 
                                      & & $K_2\cdot 10^{6}$ & 27 & 0.48 & 27 & 0.48 & 27 & 0.48                               
\end{tabular}
\end{center}
\end{table}

\subsection{The four-network problem}

In this example we consider the four-network MPET problem.
The boundary of $\Omega$ is split into four non-overlapping parts $\Gamma_1$, $\Gamma_2$, 
$\Gamma_3$ and $\Gamma_4$ in the same manner as for the {Barenblatt model} and we set 
$\boldsymbol u=0$~on~$\Gamma_4$, $(\boldsymbol {\sigma} -p_1\boldsymbol I-p_2\boldsymbol I -p_3\boldsymbol I-p_4\boldsymbol I)\boldsymbol n=(0,0)^T$ on $\Gamma_1\cup\Gamma_2$, 
$(\boldsymbol {\sigma} -p_1\boldsymbol I-p_2\boldsymbol I -p_3\boldsymbol I-p_4\boldsymbol I)\boldsymbol n=(0,-1)^T$ on $\Gamma_3$, 
$p_1=2$~on~$\Gamma$, $p_2=20$ on $\Gamma$, $p_3=30$ on $\Gamma$ and $p_4=40$ on $\Gamma$ . 
The right hand sides in~\eqref{eq:6} are chosen to be 
$\boldsymbol f=0$, $g_1=0$, $g_2=0$, $g_3=0$ and $g_4=0$. \\

The base values of the parameters for numerical testing are given in Table~\ref{parameters_MPET4}
and taken from~\cite{Vardakis2016investigating} where the four-network MPET model has been used to
simulate fluid flow in the human brain. 
Table~\ref{tab7} shows robust behaviour of the proposed block-diagonal preconditioner in~\eqref{Preconditioner:Bh}
as the number of MinRes iterations and the average residual reduction factor remain  uniformly bounded for large variations
of the coefficients $\lambda$, $K_3$ and $K=K_1=K_2=K_4$.  

Here, it is important to note that the authors have attempted to present again the least
optimal choice of parameters for testing their implementation.
{
\vspace{2ex}
\begin{table}[h!]
\caption{Base values of model parameters for the four-network MPET model.}
\label{parameters_MPET4}
\begin{center}
\begin{tabular}{ccc}
\hline 
\cellcolor{gray!35} parameter & \cellcolor{gray!20} value & \cellcolor{gray!5} unit \\[0.4ex] \hline
$\lambda$ & $505$ & Nm$^{-2}$ \\ [0.4ex]
$\mu$ & $216$ & Nm$^{-2}$ \\ [0.4ex]
$c_{p_1}=c_{p_2}=c_{p_3}=c_{p_4}$ & $4.5\cdot10^{-10}$ & m$^2$N$^{-1}$ \\ [0.4ex]
$\alpha_{1}=\alpha_{2}=\alpha_{3}=\alpha_{4}$  & $0.99$ &\\ [0.4ex]
$\beta_{12}=\beta_{24}$ & $1.5\cdot 10^{-19}$ & m$^2$N$^{-1}$s$^{-1}$ \\ [0.4ex]
$\beta_{23}$ & $2.0\cdot 10^{-19}$ & m$^2$N$^{-1}$s$^{-1}$ \\[0.4ex]
$\beta_{34}$ & $1.0\cdot 10^{-13}$ & m$^2$N$^{-1}$s$^{-1}$ \\ [0.4ex]
$K_1=K_2=K_4=K$ & $(1.0 \cdot 10^{-10})/(2.67\cdot10^{-3})$ & m$^2/$Nsm$^{-2}$ \\ [0.4ex]
$K_3$ & $(1.4 \cdot 10^{-14})/(8.9\cdot10^{-4})$ & m$^2/$Nsm$^{-2}$
\end{tabular}
\end{center}
\end{table}
}

\begin{table}[htb!]
\caption{Preconditioned MinRes convergence history for solving the four-network MPET problem.}
\label{tab7}
\begin{center}
\begin{tabular}{c|c|c|rr|rr|rr|rr|rr|rr}
$h$ &  &  & \multicolumn{2}{c}{$K_3\cdot 10^{-2}$} &  \multicolumn{2}{c}{$K_3$} &  \multicolumn{2}{c}{$K_3\cdot 10^{2}$} &  \multicolumn{2}{c}{$K_3\cdot 10^{4}$} & 
 \multicolumn{2}{c}{$K_3\cdot 10^{6}$} &  \multicolumn{2}{c}{$K_3\cdot 10^{10}$} \\ [0.1ex] \hline 
\multirow{9}{*}{$\displaystyle \frac{1}{32}$} & 
\multirow{3}{*}{$\lambda$}  & $K\cdot 10^{-2}$ & 34 & 0.56 & 32 & 0.56 & 26 & 0.47 & 23 & 0.42 & 19 & 0.37 &  19 & 0.37 \\ [0.1ex]
                                & & $K$  & 24 & 0.48 & 24 & 0.49 & 24 & 0.49 & 22 & 0.42 & 21 & 0.41 & 20 & 0.40 \\[0.1ex]
                               & & $K\cdot 10^{2}$ & 21 &  0.41 & 21 & 0.41 & 21 & 0.41 & 26 & 0.49 & 41 & 0.63  & 39 & 0.62 \\ [0.1ex] \cline{2-15}
& \multirow{3}{*}{$\lambda\cdot 10^4$}  & $K\cdot 10^{-2}$ & 18 & 0.35 & 25 & 0.48 & 30 & 0.53 & 34 & 0.57 & 34 & 0.57 &  34 & 0.57 \\ [0.1ex]
                                    & & $K$  & 12 & 0.20 & 20 & 0.40 & 35 & 0.59 & 31 & 0.54 & 31 & 0.54 & 31 & 0.54 \\ [0.1ex]
                                    & & $K\cdot 10^{2}$ & 9 & 0.12 & 18 & 0.40 & 34 & 0.58 & 21 & 0.41 & 14 & 0.26  & 14 & 0.26 \\ [0.1ex] \cline{2-15}
& \multirow{3}{*}{$\lambda\cdot 10^8$}  & $K\cdot 10^{-2}$ & 14 & 0.25 & 14 & 0.27 & 12 & 0.19 & 12 & 0.20 & 12 & 0.20  & 12 & 0.20 \\ [0.1ex]
                                   & & $K$  & 12 & 0.20 & 14 & 0.26 & 9 & 0.12 & 7 & 0.07 & 7 & 0.07  & 7 & 0.07 \\ [0.1ex]
                                  & & $K\cdot 10^{2}$ & 11 & 0.18 & 14 & 0.26 & 9 & 0.12 & 6 & 0.04 & 5 & 0.02  & 5 & 0.02  \\ [0.1ex] \hline
\multirow{9}{*}{$\displaystyle \frac{1}{64}$} &                                   
                                   \multirow{3}{*}{$\lambda$}  & $K\cdot 10^{-2}$ & 34 & 0.56 & 32 & 0.56 & 26 & 0.47 & 21 & 0.41 & 19 & 0.37 &  19 & 0.37 \\ [0.1ex]
                               & & $K$  & 24 & 0.48 & 24 & 0.49 & 24 & 0.49 & 23 & 0.42 & 22 & 0.42 & 21 & 0.41 \\ [0.1ex]
                               & & $K\cdot 10^{2}$ & 21 & 0.41 & 21 & 0.41 & 21 & 0.41 & 36 & 0.58 & 45 & 0.66  & 45 & 0.66 \\ [0.1ex] \cline{2-15}
& \multirow{3}{*}{$\lambda\cdot 10^4$}  & $K\cdot 10^{-2}$ & 20 & 0.40 & 28 & 0.51 & 34 & 0.58 & 34 & 0.57 & 34 & 0.57 &  34 & 0.57 \\ [0.1ex]
                                    & & $K$  & 13 & 0.22 & 25 & 0.48 & 36 & 0.60 & 31 & 0.54 & 31 & 0.54 & 31 & 0.54 \\ [0.1ex]
                                    & & $K\cdot 10^{2}$ & 6 & 0.03 & 25 & 0.46 & 36 & 0.60 & 21 & 0.41 & 14 & 0.26  & 14 & 0.26 \\ \cline{2-15}
& \multirow{3}{*}{$\lambda\cdot 10^8$}  & $K\cdot 10^{-2}$ & 14 & 0.25 & 14 & 0.27 & 12 & 0.19 & 12 & 0.20 & 12 & 0.20  & 12 & 0.20 \\
                                   & & $K$  & 12 & 0.20 & 14 & 0.26 & 9 & 0.12 & 7 & 0.07 & 7 & 0.07 & 7 & 0.07 \\ [0.1ex]
                                  & & $K\cdot 10^{2}$ & 12 & 0.20 & 14 & 0.26 & 9 & 0.12 & 6 & 0.04 & 5 & 0.02 &  5 & 0.02  \\ [0.1ex] \hline
\multirow{9}{*}{$\displaystyle \frac{1}{128}$} &                                   
                                   \multirow{3}{*}{$\lambda$}  & $K\cdot 10^{-2}$ & 34 & 0.55 & 32 & 0.56 & 26 & 0.47 & 21 & 0.41 & 19 & 0.37 &  19 & 0.37 \\ [0.1ex]
                               & & $K$  & 24 & 0.48 & 24 & 0.49 & 24 & 0.49 & 24 & 0.44 & 23 & 0.42 & 22 & 0.41 \\ [0.1ex]
                               & & $K\cdot 10^{2}$ & 21 & 0.41 & 21 & 0.41 & 21 & 0.41 & 43 & 0.64 & 49 & 0.68  & 49 & 0.68 \\[0.1ex] \cline{2-15}
& \multirow{3}{*}{$\lambda\cdot 10^4$}  & $K\cdot 10^{-2}$ & 22 & 0.41 & 30 & 0.55 & 35 & 0.59 & 34 & 0.57 & 34 & 0.57 &  34 & 0.57 \\ [0.1ex]
                                    & & $K$  & 14 & 0.28 & 29 & 0.54 & 36 & 0.60 & 31 & 0.54 & 31 & 0.54 & 31 & 0.54 \\ [0.1ex]
                                    & & $K\cdot 10^{2}$ & 12 & 0.20 & 30 & 0.54 & 36 & 0.60 & 21 & 0.41 & 14 & 0.26 &  14 & 0.26 \\ [0.1ex] \cline{2-15}
& \multirow{3}{*}{$\lambda\cdot 10^8$}  & $K\cdot 10^{-2}$ & 14 & 0.25 & 14 & 0.27 & 12 & 0.19 & 12 & 0.20 & 12 & 0.20  & 12 & 0.20 \\ [0.1ex]
                                   & & $K$  & 12 & 0.20 & 14  & 0.26 & 9 & 0.12 & 7 & 0.07 & 7 & 0.07 &  7 & 0.07 \\ [0.1ex]
                                  & & $K\cdot 10^{2}$& 12 & 0.20 & 14 & 0.26 &  9 & 0.12  & 6 & 0.04 & 5 & 0.02 &  5 & 0.02                                 
\end{tabular}
\end{center}
\end{table}

\section{Conclusions}\label{conclusion}
In this paper,
motivated  by the approach recently presented by Hong and Kraus~[Parameter-robust stability of classical three-field formulation of Biot's
consolidation model, ETNA (to appear)] for the Biot model,
we establish the uniform stability, {design} stable disretizations and a parameter-robust preconditioners
for flux-based formulations of multiple-network poroelastic systems. 
Novel proper parameter-matrix-dependent norms that provide the key for establishing uniform inf-sup stability of the continuous
problems are introduced. The stability results that could be obtained using the presented {\it matrix technique} are uniform not
only with respect to the Lam\'e parameter $\lambda$ but also with respect to all the other model parameters such as small or large
permeability coefficients $K_i$, arbitrary small or even vanishing storage coefficients $c_{p_i}$, arbitrary small or even vanishing
network transfer coefficients $\beta_{ij}, i,j=1,\cdots,n$, the scale of the networks $n$, and the time step size $\tau$. 

Moreover, strongly mass conservative and uniformly stable discretizations are proposed and corresponding uniform and
optimal error estimates proved which are also independent of the Lam\'e parameter $\lambda$, the permeability coefficients $K_i$,
the storage coefficients $c_{p_i}$,  the network transfer coefficients $\beta_{ij}, i,j=1,\cdots,n$, the scale  of the networks $n$, the
time step size $\tau$ and the mesh size $h$. The transfer of the canonical (norm-equivalent) operator preconditioners from the
continuous to the discrete level lays the foundation for optimal and fully robust iterative solution methods.
Numerical experiments that are motivated by practical applications are presented confirming both the uniform and optimal
convergence of the proposed finite element methods and the uniform robustness of the norm-equivalent preconditioners.


\bibliographystyle{plain}
\bibliography{reference_mpet}

\end{document}